\DeclareMathOperator*{\esssup}{ess\,sup}
\DeclareMathOperator*{\essinf}{ess\,inf}
\let\Oldsection\section
\renewcommand{\section}{\FloatBarrier\Oldsection}
\let\Oldsubsection\subsection
\renewcommand{\subsection}{\FloatBarrier\Oldsubsection}
\let\Oldsubsubsection\subsubsection
\renewcommand{\subsubsection}{\FloatBarrier\Oldsubsubsection}
\newcommand{\vardbtilde}[1]{\widetilde{\raisebox{0pt}[0.9\height]{$\widetilde{#1}$}}}
\newtheorem{theorem}{Theorem}[section]
\newtheorem{lemma}[theorem]{Lemma}
\newtheorem{proposition}[theorem]{Proposition}
\newtheorem{corollary}[theorem]{Corollary}
\newtheorem{definition}[theorem]{Definition}
\newtheorem{assumption}[theorem]{Assumption}
\newtheorem{remark}[theorem]{Remark}
\tikzstyle{input} = [circle, minimum width=1cm, text centered, draw=black, fill=green!20]
\tikzstyle{output} = [circle, minimum width=1cm, text centered, draw=black, fill=blue!20]
\tikzstyle{lstm} = [rectangle, rounded corners, minimum width=2cm, minimum height=1cm,text centered, draw=black, fill=red!20]
 \tikzstyle{lin} = [rectangle, minimum width=2cm, minimum height=1cm,text centered, draw=black, fill=orange!20]
 \tikzstyle{act} = [ellipse, minimum width=2cm, minimum height=1cm,text centered, draw=black, fill=yellow!20]
\tikzstyle{dot} = [rectangle, minimum width=2cm, minimum height=1cm,text centered]
\tikzstyle{arrow} = [thick,->,>=stealth]
\tikzstyle{map} = [thick, dashed,->,>=stealth]
\numberwithin{equation}{section}
\newenvironment{keywords}{}{}
\newcommand{\NN}{\mathcal{N}\mathcal{N}}
\newcommand{\bR}{\mathbb{R}}
\newcommand{\bE}{\mathbb{E}}
\begin{document}
\title{Deep Learning for Energy Market Contracts: Dynkin Game with Doubly RBSDEs}
       
\author{ Nacira Agram $^{1,3}$, Ihsan Arharas $^{2}$, Giulia Pucci $^{1}$ and Jan Rems $^{3}$}
\maketitle

\footnotetext[1]{Department of Mathematics, KTH Royal Institute of Technology 100 44, Stockholm, Sweden. 
Email: nacira@kth.se, pucci@kth.se. Work supported by the Swedish Research Council grant (2020-04697).}
\footnotetext[2]{Department of Mathematics, Linnaeus University
(LNU), V\" axj\" o, Sweden. Email: ihsan.arharas@lnu.se }
\footnotetext[3]{Department of Mathematics, University of Ljubljana, Ljubljana, Slovenia. Email: jan.rems@fmf.uni-lj.si. Work supported by Slovenian Research and Innovation Agency, research core funding No.P1-0448.}

\begin{abstract}
We formulate a Contract for Difference (CfD) with early exit options as a two-player zero-sum Dynkin game, reflecting the strategic interaction between an electricity producer and a regulatory entity. The game incorporates penalties for early termination and mean-reverting price dynamics, with the value characterized through a doubly reflected backward stochastic differential equation (DRBSDE).

To compute the contract value and optimal stopping strategies, we develop a neural solver that approximates the DRBSDE solution using a sequence of neural networks trained on simulated trajectories. The method avoids discretizing the state space, supports time-dependent barriers, and scales to high-dimensional settings. We establish a convergence result and test the method on two scenarios: a benchmark symmetric game in 20 dimensions, and a CfD model with 24-dimensional electricity prices representing multiple European zones.

The results demonstrate that the proposed solver accurately captures the contract's value and optimal stopping regions, with consistent performance across dimensional settings.
\end{abstract}

\begin{keywords}
\small \textbf{Keywords:} Deep learning; Doubly reflected BSDEs; Contract for difference; Dynkin game.
\end{keywords}


\section{Introduction}

Electricity markets operate in a dynamic environment where prices fluctuate due to changes in supply and demand, fuel costs, regulatory interventions, and technological developments. The shift to a low carbon energy system has increased reliance on intermittent renewable sources such as wind and solar. While environmentally sustainable, these technologies introduce revenue uncertainty due to volatile output and fluctuating market prices, posing financial challenges for producers. In response, governments and regulatory bodies have introduced financial instruments to de-risk investments in green electricity.

A central mechanism in this transition is the \emph{Contract for Difference} (CfD), originally developed in financial markets and now widely adopted in European energy policy, especially following the 2023 electricity market reform \cite{eu2023}. Under a CfD, a producer receives a fixed strike price for electricity sold: if the market price falls below the strike, the regulator compensates the shortfall; if the price exceeds the strike, the producer returns the surplus. This arrangement stabilizes producer revenues while maintaining market-based incentives.

However, most CfDs include \emph{early exit clauses}, allowing either party to terminate the contract before maturity by paying a penalty. This optionality introduces a strategic interaction between producer and regulator: both parties evaluate when to exit based on market conditions and financial risk. We formulate this setting as a \emph{two-player zero-sum Dynkin game}, where each player chooses a stopping time to minimize or maximize expected payoff that includes a running cost, penalties, and a terminal reward.

Precisely, let $T > 0$ be a fixed time horizon, and consider a starting time $t \in [0,T]$. Consider a filtered probability space $(\Omega, \mathcal{F}, \{\mathcal{F}_t\}_{t \in [0,T]}, \mathbb{P})$ satisfying the usual conditions and supporting a $d$-dimensional Wiener process $B = (B_t)_{t \in [0,T]}$, where $d \ge 1$. Let $\mathcal{T}_{t,T}$ denote the set of all stopping times $\tau$ such that $t \leq \tau \leq T$ almost surely with respect to the filtration $\{\mathcal{F}_t\}$. To describe the underlying market dynamics, we model electricity prices as a multi-dimensional stochastic process \( X^{t,x} = (X_s^{t,x})_{s \in [t,T]} \) governed by the \emph{Stochastic Differential Equation} (SDE):
\begin{equation} \label{eq:SDE-general}
    dX^{t,x}_s = b(s, X^{t,x}_s)\, ds + \sigma\, (s, X^{t,x}_s) dB_s, \quad X_t^{t,x} = x \in \mathbb{R}^d,
\end{equation}
where $b : [0, T] \times \mathbb{R}^d \to \mathbb{R}^d$ and $\sigma : [0, T] \times \mathbb{R}^d \to \mathbb{R}^{d \times d}$ are measurable functions.

The payoff functional associated with the Dynkin game is given by
\begin{align} \label{dynkin_game}
J_{t,x}(\tau_1, \tau_2) &= \mathbb{E}\Bigg[ \int_t^{\tau_1 \wedge \tau_2} \varphi(s, X_s^{t,x})\, ds 
+ f_1(\tau_1, X_{\tau_1}^{t,x}) \mathbbm{1}_{\{\tau_1 \le \tau_2, \tau_1 < T\}} \notag \\
&\qquad - f_2(\tau_2, X_{\tau_2}^{t,x}) \mathbbm{1}_{\{\tau_2 < \tau_1\}} 
+ g(X_T^{t,x}) \mathbbm{1}_{\{\tau_1 \wedge \tau_2 = T\}} \Big| \mathcal{F}_t \Bigg],
\end{align}
and the game is considered ``fair'' if the upper and lower values for the game coincide:
\begin{equation} 
    \essinf_{\tau_1 \in \mathcal{T}_{t, T}} \esssup_{\tau_2 \in \mathcal{T}_{t, T}} J_{t,x}(\tau_1, \tau_2) 
    = V(t,x) = \esssup_{\tau_2 \in \mathcal{T}_{t, T}} \essinf_{\tau_1 \in \mathcal{T}_{t, T}} J_{t,x}(\tau_1, \tau_2).
\end{equation}
The common value $ V(t,x) $ represents the value of the game. This Dynkin game is equivalently characterized by a \emph{Doubly Reflected Backward Stochastic Differential Equation} (DRBSDE), which takes the form:
\begin{equation} \label{eq:drbsde_intro}
Y_s^{t,x} = g(X_T^{t,x}) + \int_s^T \varphi(r, X_r^{t,x})\, dr - \int_s^T Z_r^{t,x}\, dB_r + (A_T^{t,x} - A_s^{t,x}) - (C_T^{t,x} - C_s^{t,x}),
\end{equation}
subject to the reflection constraints:
\[
-f_2(s, X_s^{t,x}) \le Y_s^{t,x} \le f_1(s, X_s^{t,x}), \quad \forall s \in [t,T],
\]
with Skorokhod minimality conditions ensuring that \( Y^{t,x} \) is reflected only when necessary. This formulation was introduced by \cite{cvitanic_karatzas_1996} and extended to broader settings in, e.g., \cite{hamadene_2006, dumitrescu2017, Ouknine2024, chassagneux2009discrete, dumitrescu2016reflected}.

 When modeling the price dynamics underlying a CfD, we adopt the \emph{Ornstein-Uhlenbeck} (OU) process, a widely used model in energy finance due to its capacity to capture mean-reverting behavior, its empirical calibration tractability, and its analytical convenience \cite{lucia2002electricity, benth2008stochastic}.

We interpret the components of the payoff functional \eqref{dynkin_game} as follows: Player~1, representing the regulatory authority, incurs a penalty \( f_1(\tau_1, X^{t,x}_{\tau_1}) \) upon early termination, while Player~2, the electricity generator, pays a penalty \( f_2(\tau_2, X^{t,x}_{\tau_2}) \) if they choose to exit before the contract's maturity. If neither party exits early, the contract settles at the terminal time \( T \) without further adjustment. These asymmetric incentives define a two-player zero-sum Dynkin game, where each player strategically selects an optimal stopping time based on the stochastic evolution of electricity prices. The fair value of the contract is thus identified with the value of the corresponding Dynkin game between the two players.

Although the CfD model we study is Markovian, we deliberately adopt a DRBSDE formulation for two key reasons. First, DRBSDEs provide a robust probabilistic representation of Dynkin games that can handle irregular or non-Markovian settings, and are particularly well suited for backward in time numerical algorithms. This structural advantage aligns with the interpretation proposed by \cite{dumitrescu2017}, who emphasize that even in Markovian contexts, backward stochastic formulations offer flexible characterizations and solver friendly implementations.

Second, the DRBSDE approach naturally integrates with deep learning methods that rely on trajectory simulation and backward training. Unlike PDE methods, which require fine spatial discretization and often become computationally infeasible in high dimensions, DRBSDEs enable mesh-free solvers that scale efficiently to complex energy market models involving multiple regions, products, or time scales. This perspective is reinforced by the numerical methods developed in \cite{chassagneux2009discrete} and \cite{dumitrescu2016reflected}, as well as the recent survey by \cite{chessari2023survey}. Furthermore, recent theoretical advances have extended DRBSDEs to non-Markovian and rough settings, such as $G$-Brownian motion \cite{li2025doubly}, and have analyzed Nash equilibria in reflected game settings \cite{lin2013nash}.

To the best of our knowledge, we are the first to introduce a backward deep learning algorithm, referred to as the \emph{Deep DRBSDE solver}, to compute the solution of the associated DRBSDE by learning both the value process \( Y \) and the optimal stopping regions from sample trajectories. Our solver accommodates high-dimensional dynamics.

\paragraph{Paper Organization.}
Section~\ref{sec:application} formulates the CfD with early exit options as a two-player Dynkin game and describes its representation via a DRBSDE. In Section~\ref{sec:deep_learning}, we develop a neural network algorithm for solving the DRBSDE in high-dimensional settings. Section~\ref{sec:conv} provides a convergence analysis of the proposed method. Finally, Section~\ref{sec:num experiments} presents numerical results, consisting of a 20-dimensional benchmark problem and a CfD model with 24-dimensional electricity prices. 
The Python implementation of the Deep DRBSDE solver, along with all numerical experiments, is available at:
\url{https://github.com/giuliapucci98/DRBSDE-Dynkin-Game}.

\section{Contract for Differences with Exit Options in En-
ergy Markets}
\label{sec:application}
In this section, we present a stochastic game-theoretic framework for two-way CfD in electricity markets, incorporating early termination options for both the regulatory entity and the power producer, denoted also as Player $1$ and Player $2$, respectively. \\

CfDs are financial instruments widely used to stabilize revenues for electricity generators while providing predictable cost structures for regulators. These contracts define a fixed strike price: if the market price falls below this level, the regulator compensates the generator; if the price exceeds it, the surplus is returned. Such mechanisms are central to reducing price volatility and supporting long-term investments in renewable energy infrastructure \cite{ADP, BGJWK}. \\

To enhance contractual flexibility, modern CfDs can include \emph{two-way exit options}, allowing either party to terminate the agreement before its maturity.  Early termination triggers penalty payments, introducing a strategic element: each party must weigh the cost of exiting against the expected value of remaining in the contract. This naturally leads to a \emph{two player Dynkin game}, in which both participants seek to optimize their respective outcomes by choosing an optimal stopping time. We adopt this game-theoretic framework and link it to a DRBSDE, which characterizes the value function of the game and the equilibrium stopping strategies. Theoretical background on DRBSDEs and their connection to Dynkin games is reviewed in Appendix~\ref{appendix:drbsde}. \\




\subsection{Market Price Modeling}\label{sec:market-price}

We model electricity prices using a $d$-dimensional OU process, which captures mean-reverting behavior observed in energy markets. The forward process \(X^{t,x} = (X_s^{t,x})_{s \in [t,T]}\) evolves as:
\begin{equation}
    dX^{t,x}_s = \kappa(\mu - X^{t,x}_s)\,ds + \sigma\,dB_s, \quad X^{t,x}_t = x \in \mathbb{R}^d,
    \label{eq:OU}
\end{equation}
where:
\(\kappa \in \mathbb{R}^{d \times d}\) is the mean-reversion rate matrix which is positive definite,
 \(\mu \in \mathbb{R}^d\) is the long term mean level,
\(\sigma \in \mathbb{R}^{d \times d}\) is the volatility matrix and \(B = (B^1, \dots, B^d)\) is a $d$-dimensional standard Brownian motion.

\vspace{0.5em}
The OU process is a widely adopted model in energy finance for describing electricity spot and forward prices; see, e.g., \cite{lucia2002electricity, benth2008stochastic}. Its defining feature is the mean-reverting drift, which reflects the tendency of electricity prices to revert toward a long-term equilibrium due to real time balancing mechanisms and market regulations.\\
From an economic standpoint, electricity is non storable, and supply demand imbalances are rapidly corrected. This leads to transient price deviations and a pullback toward a baseline level, which is naturally captured by the drift term \( \kappa(\mu - X) \) in the OU model.\\
From a statistical standpoint, the OU process is analytically tractable: its transition densities, autocorrelation structure, and moment generating functions are explicitly available. This facilitates calibration to historical data and ensures reliable simulation under both the historical and risk-neutral measures. We leverage these properties in our calibration study in Section~\ref{calibration}.\\
In our numerical framework, the OU process is particularly well suited to DRBSDE solvers. Its continuous sample paths and affine structure simplify both forward simulation and backward approximation. Moreover, the OU dynamics satisfy the regularity assumptions such as Lipschitz continuity of the coefficients and bounded moments that underpin the DRBSDE well-posedness (Theorem~\ref{existence of solution DRBSDE}) and the convergence of the Deep DRBSDE solver (Theorem~\ref{theo:algo}).
\vspace{0.5em}
Importantly, we emphasize that the OU model is used for empirical realism and computational tractability, not due to limitations of our method. Our algorithm and convergence theory apply to a broader class of Markovian state processes with Lipschitz coefficients. Thus, while our numerical experiments focus on the OU model, the solver is readily applicable to more general dynamics.

\subsection{Exit Options and Strategic Decision-Making}
The key idea behind a two-way CfD initiated at $t$ and maturing at $T$ is that, at each time $s \in [t,T]$, the amount exchanged between the parties is adjusted according to the weighted difference between the market price vector \(X^{t,x}_s \in \mathbb{R}^d\) and a given fixed strike price vector \(K \in \mathbb{R}^d\). This translates into setting a payoff function of the form
\begin{equation}
\varphi(s, X^{t,x}_s) = \langle w,  \bigl(K - X^{t,x}_s\bigr) e^{-\rho (s - t)} \rangle,\label{phi}
\end{equation}where $w \in \bR^d$ is the weight vector with elements summing to 1, and \(\rho > 0\) is the discount rate. The introduction of the weights allows players to put a different emphasis on different prices that are included in the contract.
To discourage premature termination, penalty clauses for early exits are introduced. Player $1$, representing the regulatory entity, incurs a penalty $f_1(\tau_1,X^{t,x}_{\tau_1})$ upon early termination, while Player $2$, the electricity generator, pays a penalty $f_2(\tau_2,X^{t,x}_{\tau_2})$ if they choose to withdraw before the contract's expiration. 
If neither party exits early, the contract reaches maturity at $T$ and no additional terminal adjustment is required.

These competing incentives create a strategic conflict, naturally leading to a two-player Dynkin game, where each party optimally selects a stopping time $\tau_i, i=1,2$, to maximize their respective payoffs. The expected cost to Player $1$, which is equivalent to the gain of Player $2$, is given by:
\begin{align} \label{cfd payoff}
    J_{t,x}(\tau_1, \tau_2) &= \mathbb{E}\bigg[
        \displaystyle\int_t^{\tau_1 \wedge \tau_2} \varphi(s, X^{t,x}_s) \, ds
        + f_1(\tau_1,X^{t,x}_{\tau_1}) \mathbbm{1}_{\{\tau_1 \leq \tau_2, \tau_1 < T\}} 
        \notag \\
        & - f_2(\tau_2,X^{t,x}_{\tau_2}) \mathbbm{1}_{\{\tau_2 < \tau_1\}}   \bigg| \, \mathcal{F}_t \bigg], \quad \tau_1, \tau_2 \in \mathcal{T}_{t, T}. 
\end{align} 
Here, $\tau_1 \wedge \tau_2 $ denotes the first contract termination. Player $1$ aims to minimize $J_{t,x}(\tau_1, \tau_2)$, while Player $2$ seeks to maximize it, leading to a zero-sum game structure. Moreover, the upper and lower value functions of the game on $[t, T]$ are given by:
\begin{equation*}
    \overline{V}(t,x) = \essinf_{\tau_1 \in \mathcal{T}_{t, T}} \esssup_{\tau_2 \in \mathcal{T}_{t, T}} J_{t,x}(\tau_1, \tau_2), \quad
    \underline{V}(t,x) = \esssup_{\tau_2 \in \mathcal{T}_{t, T}} \essinf_{\tau_1 \in \mathcal{T}_{t, T}} J_{t,x}(\tau_1, \tau_2).
\end{equation*}

The solution to this problem requires determining optimal stopping times that satisfy the equilibrium.

\subsection{Solution via DRBSDEs}

We characterize the value of the two-player Dynkin game \ref{cfd payoff} using a DRBSDE. The connection between Dynkin games and DRBSDEs is well-established in the literature and recalled in Appendix~\ref{sec DRBSDEs}. This formulation allows us to compute both the game's value and the optimal stopping strategies for the two players.\\

We recall below the standard functional spaces used in the formulation and analysis of DRBSDEs:

\begin{itemize}
    \item $\mathcal{S}$ is the space of $\mathcal{F}_t$-adapted continuous processes $(Y_t)_{t \leq T}$ with values in $\mathbb{R}$, and $\mathcal{S}^2 := \{ Y \in \mathcal{S} \mid \mathbb{E}[\sup_{t \leq T} |Y_t|^2] < \infty \}$.
    
    \item $\widetilde{\mathcal{P}}$ (resp. $\mathcal{P}$) denotes the $\mathcal{F}_t$-progressive (resp. predictable) $\sigma$-algebra on $\Omega \times [0, T]$.

    \item $\mathbb{L}^2$ is the space of $\mathcal{F}_T$-measurable random variables $\xi: \Omega \to \mathbb{R}$ such that $\mathbb{E}[|\xi|^2] < \infty$.

    \item $\mathcal{H}^{2,d}$ (resp. $\mathcal{H}^d$) is the space of $\widetilde{\mathcal{P}}$-measurable processes $Z = (Z_t)_{t \leq T}$ with values in $\mathbb{R}^d$ that are square-integrable with respect to $d\mathbb{P} \otimes dt$ (resp. $\mathbb{P}$-a.s. $dt$-square integrable).

    \item $S_{ci}$ (resp. $S_{ci}^2$) denotes the space of continuous, non-decreasing, $\mathcal{P}$-measurable processes $A = (A_t)_{t \leq T}$ with $A_0 = 0$ (resp. and $\mathbb{E}[A_T^2] < \infty$).
\end{itemize}

 Given that the penalty functions $f_1$ and $f_2$ satisfy Assumption \ref{ass 2}, we consider the unique $\mathcal{P}$-measurable solution $(Y^{t,x}, Z^{t,x}, A^{t,x}, C^{t,x})$ of the DRBSDE associated with the data of the two-way CfD: 
\[
\left( \varphi(\cdot, X^{t,x}_\cdot),\, 0,\, f_1(\cdot, X^{t,x}_\cdot),\, -f_2(\cdot, X^{t,x}_\cdot) \right),
\]
that is,
\begin{itemize}
    \item[(i)] $Y^{t,x} \in \mathcal{S}^2, \, Z^{t,x} \in \mathcal{H}^{2,d}, \, A^{t,x} \in S_{ci}, \, \text{and} \, C^{t,x} \in S_{ci}$.    
    \item[(ii)] For each $s \in [t, T]$, 
    \begin{equation} \label{DRBSDE cfd}
    Y^{t,x}_s = \displaystyle\int_s^T \varphi(r,X^{t,x}_r)dr
   - \displaystyle\int_s^T Z^{t,x}_r dB_r + (A^{t,x}_T - A^{t,x}_s) - (C^{t,x}_T - C^{t,x}_s).
    \end{equation}
    \item[(iii)] $ -f_2(s, X^{t,x}_s) \leq Y^{t,x}_s \leq f_1(s, X^{t,x}_s), \quad \forall s \in [t, T].$ 
    \item[(iv)] $
    \displaystyle\int_t^T (Y^{t,x}_r + f_2(r,X^{t,x}_r)) dA^{t,x}_r = 
    \int_t^T (f_1(r,X^{t,x}_r) - Y^{t,x}_r) dC^{t,x}_r = 0.$ 
\end{itemize}
 The existence and uniqueness of the solution follow from Theorem \ref{existence of solution DRBSDE}. \\

The Skorokhod condition (iv) ensure that the processes $A^{t,x}$ and $C^{t,x}$ act only when necessary to keep $Y^{t,x}$ within the barriers $[-f_2, f_1]$. The increasing process $A^{t,x}$ adjusts $Y^{t,x}$ only when it reaches the lower boundary $-f_2$, meaning that Player 1 is forced to stop at this level. Similarly, the process $C^{t,x}$ increases only when $Y^{t,x}$ reaches the upper boundary $f_1$, forcing Player 2 to stop. \\

We also define the associated stopping strategies:
\begin{align*}
    \tau_{1,t}^* &:= \inf \left\{ s \geq t : Y^{t,x}_s = f_1(s, X^{t,x}_s) \right\} \wedge T, \\
    \tau_{2,t}^* &:= \inf \left\{ s \geq t : Y^{t,x}_s = -f_2(s, X^{t,x}_s) \right\} \wedge T.
\end{align*}

Then the first component of the DRBSDE solution satisfies
\begin{equation}
    Y^{t,x}_t = \overline{V}(t,x) = \underline{V}(t,x),
\end{equation}
and the pair $(\tau_{1,t}^*, \tau_{2,t}^*)$ constitutes a saddle point of the Dynkin game. This identification of the game value and equilibrium strategies via the DRBSDE solution corresponds exactly to Theorem~\ref{theo:link}, recalled in Appendix~\ref{sec DRBSDEs}, which formalizes the equivalence between two-player Dynkin games and doubly reflected BSDEs with bilateral barriers.



\subsection{Application to Exponentially Decaying Penalty Structures}

A common approach to modeling penalties for early contract termination assumes that the cost of terminating the contract diminishes over time, reflecting the increasing reluctance of counterparties to withdraw as maturity approaches. This can be captured through an exponentially decaying penalty function:
\begin{equation}
    f_1(s,X^{t,x}_s)= \gamma_1 e^{ -\rho( s-t)}, \quad f_2(s,X^{t,x}_s)= \gamma_2 e^{- \rho (s-t)}.
    \label{barriers}
\end{equation}
This choice aligns with practical penalty structures observed in electricity markets, where long term commitments are encouraged, and early withdrawals are penalized. By explicitly writing the driver function \(\varphi\) defined in \eqref{phi}, the expected outcome in Equation \eqref{cfd payoff} becomes
\begin{align} \label{cfd payoff-explicit}
    J_{t,x}(\tau_1, \tau_2) =& \mathbb{E}\bigg[
        \int_t^{\tau_1 \wedge \tau_2} \langle w,  \bigl(K - X^{t,x}_s\bigr) e^{-\rho (s - t)} \rangle \, ds
        +  \gamma_1 e^{ -\rho( \tau_1-t)} \mathbbm{1}_{\{\tau_1 \leq \tau_2, \tau_1 < T\}} 
        \notag \\
        & - \gamma_2 e^{- \rho (\tau_2-t)} \mathbbm{1}_{\{\tau_2 < \tau_1\}}   \bigg| \, \mathcal{F}_t \bigg], \quad \tau_1, \tau_2 \in \mathcal{T}_{t, T}. 
\end{align} 

To incorporate this structure into the DRBSDE framework, we reformulate the reflected constraints in terms of the exponentially decaying barriers:
\begin{equation}
    \begin{cases}
        Y^{t,x}_s =  \displaystyle\int_s^T\langle w,  \bigl(K - X^{t,x}_r\bigr) e^{-\rho (r - t)} \rangle \ dr - \int_s^T Z^{t,x}_r dB_r + (A^{t,x}_T - A^{t,x}_s) - (C^{t,x}_T - C^{t,x}_s), \\[10pt]
        -\gamma_2 e^{- \rho (s-t)} \leq Y^{t,x}_s \leq \gamma_1 e^{- \rho (s-t)}, \\[10pt]
        \displaystyle\int_t^T (Y^{t,x}_r + \gamma_2 e^{- \rho (r-t)}) dA^{t,x}_r = 
        \int_t^T (\gamma_1 e^{- \rho (r-t)} - Y^{t,x}_r) dC^{t,x}_r = 0.
         \label{DRBSDEdecay}
    \end{cases}
\end{equation}

This formulation ensures that the solution remains within the time dependent barriers given by the decaying penalty functions. The increasing processes $A^{t,x}$ and $C^{t,x}$ act to keep $Y^{t,x}$ inside the interval determined by the exponentially decaying constraints. The effect of this formulation is that early terminations are discouraged more strongly at the beginning of the contract period, whereas termination becomes more feasible as $s$ approaches $T$ due to the vanishing penalty terms.

In this setting, the optimal stopping times $\tau_{1,t}^*$ and $\tau_{2,t}^*$ are adapted to the time-dependent constraints:
\begin{equation*}
    \tau_{1,t}^* = \inf \{s \geq t : Y^{t,x}_s = \gamma_1 e^{-\rho (s-t)} \}, \quad \tau_{2,t}^* = \inf \{s \geq t : Y^{t,x}_s = -\gamma_2 e^{-\rho (s-t)} \}.
\end{equation*}

This penalty structure reflects the dynamics of contract termination in electricity markets, emphasizing the consequences of non compliance. It also allows regulators and market participants to analyze the sensitivity of optimal stopping decisions to penalty decay rates, which can inform policy decisions on contract design and risk mitigation strategies.


\section{Deep Learning for DRBSDEs}\label{sec:deep_learning}

In this section, we extend the deep learning-based algorithm introduced in \cite{pham} for solving reflected BSDEs to the case of DRBSDEs. Our approach employs feedforward neural networks to approximate the unknown functions associated with the DRBSDE. These networks provide an efficient way to learn complex functional relationships through affine transformations and nonlinear activation functions. Moreover, we address the algorithm's convergence in terms of the DRBSDE solution, the optimal stopping times, and the value function, ensuring the effectiveness of our approach.

From now on, we will assume that all contracts are stipulated at time $t=0$. Consequently, we will omit starting time and initial state from the superscripts of the processes. This simplification does not affect the algorithm but simplifies the notation.

\subsection{Neural Network Architecture}

The neural network consists of $L+1$ layers, where $L > 1$, and $N_\ell$ neurons in each layer, for $\ell = 0, \dots, L$. The first layer, known as the input layer, has $N_0$ neurons, corresponding to the dimension of the state variable $x$. The output layer has $N_L$ neurons, while the $L-1$ hidden layers each contain $N_\ell = h$ neurons, for $\ell = 1, \dots, L-1$.

A feedforward neural network is a function mapping $\mathbb{R}^{N_0}$ to $\mathbb{R}^{N_L}$, expressed as:
\begin{equation*}
\mathcal{N}(x;\theta) = (A_L \circ \psi \circ A_{L-1} \circ \psi \circ \dots \circ \psi \circ A_1)(x),
\end{equation*}
where each $A_\ell$ is an affine transformation defined as:
\begin{equation*}
A_\ell(x) = W_\ell x + b_\ell,
\end{equation*}
where $W_\ell \in \mathbb{R}^{N_\ell \times N_{\ell-1}}$ is the weight matrix and $b_\ell \in \mathbb{R}^{N_\ell}$ is the bias vector for layer $\ell$. The activation function $\psi: \mathbb{R} \to \mathbb{R}$ is applied component-wise after each affine transformation. Common choices for activation functions include ReLU, tanh, and sigmoid functions. In the notation $\mathcal{N}(\theta)$, the parameter vector $\theta$ represents all trainable weights and biases in the network.

\subsection{Time Discretization of the Forward and Backward Components} \label{discretization}

To approximate the solution of the DRBSDE, we begin by discretizing both the forward SDE and the backward equation. This discretization provides training data and forms the basis for the learning algorithm.

\vspace{0.2cm}
\noindent\textbf{Forward Process.} 
Let \( N \in \mathbb{N} \) denote the number of time steps and consider the uniform grid \( \pi := \{t_0, t_1, \dots, t_N\} \) on the interval \( [0, T] \), where \( \Delta t = \frac{T}{N} \). The forward SDE \eqref{eq:SDE-general} is discretized using the Euler-Maruyama scheme:
\begin{equation*}
\begin{cases}
X^N_{n+1} = X^N_n + b(t_n, X^N_n)\, \Delta t + \sigma(t_n, X^N_n)\, \Delta B_{n+1}, \\
X^N_0 = x, \quad \Delta B_{n+1} := B_{t_{n+1}} - B_{t_n}, \quad n = 0, \dots, N-1.
\end{cases}
\end{equation*}

\vspace{0.2cm}
\noindent\textbf{Backward Process.} 
If we ignore the terms including processes $A$ and $C$ in Equation \eqref{eq:drbsde_intro}, we obtain the following process 
\begin{align}\label{eq:bem}
    \widetilde{Y}_{t} = Y_{t_{n+1}} +  \displaystyle\int_{t}^{t_{n+1}} \varphi(s, X_s) ds
   - \displaystyle\int_{t}^{t_{n+1}} Z_s dB_s,
\end{align}
defined on each subinterval $[t_n, t_{n+1})$ for $n = 0, \ldots, N-1$, while $\widetilde{Y}_{t_N} = g(X_T)$. This approximation captures the drift and martingale components of the solution, assuming no contact with the barriers on the current sub-interval.

To account for the reflection mechanism, the process \( \widetilde{Y} \) is subsequently projected back onto the interval defined by the barriers \( [-f_2(t, X_t), f_1(t, X_t)] \) at each grid point,
\[
\widehat{Y}_{t_n} = \min\left( \max\left( \widetilde{Y}_{t_n}, -f_2(t_n, X_{t_n}) \right), f_1(t_n, X_{t_n}) \right).
\]
This enforces the constraint structure of the DRBSDE and ensures that the Skorokhod conditions are respected in the discrete time limit.

\vspace{0.2cm}
\noindent\textbf{Gradient-Based Approximation.} 
To approximate \( Y \) and \( Z \), we use the value function representation:
\begin{equation*}
Y_0 = V(0,x), \qquad Z_t = \sigma^\intercal(t, X_t)\, \partial_x V(t, X_t), \quad t \in [0,T].
\end{equation*}
The functions \( V \) and \( \partial_x V \) are parameterized by neural networks and optimized via stochastic gradient descent. The full architecture and learning algorithm are presented in the next section.

\subsection{Neural Network Approximation of the DRBSDE Solution}

To approximate the solution of the DRBSDE, we use a localized algorithm where at each discrete time step $t_n$ we employ two independent neural networks and denote:
\begin{itemize}
    \item $\widetilde{Y}^N_n =  \mathcal{Y}^N_n(t_n, X_n; \theta_n^1)$ to approximate $\widetilde{Y}_{t_n}$, 
    \item $\widehat{Z}^N_n = \mathcal{Z}^N_n(t_n, X_n; \theta_n^2)$ to approximate $Z_{t_n}$.
\end{itemize}
In practice, these two networks are combined into a single larger network, denoted by $\mathcal{NN  }_{n}(t_n, X_n; \theta_n)$, where $\theta_n= (\theta_n^1,\theta_n^2)$ represents the full set of trainable parameters.  The network is trained in a way that ensures the obtained processes follow the dynamics in Equation \eqref{eq:bem}. After the training, we set 
\begin{equation}\label{eq:minmax}
    \widehat{Y}^N_n = \min(\max(\widetilde{Y}^N_{n}, - f_2(t_n, X_n^{N})),f_1(t_n, X^N_{n})) 
\end{equation} 
to ensure that the obtained approximation follows the doubly reflected solution in Equation \eqref{eq:drbsde_intro}.
The steps involved in training the neural networks and solving the DRBSDE are outlined in the following algorithm:

\begin{algorithm}[H]
\caption{Deep DRBSDE solver}
\label{algo:DRBSDE}
\begin{algorithmic}[1]
\For{$n = N-1$ to $0$}
    \For{each epoch}
        \For{$j = 0$ to $M$}
            \State Initialize state $X_0$ with initial condition $x_0$
            \For{$i = 0$ to $n$}
                \State Sample Brownian motion increment $\Delta B_{i+1}^{N,j}$
                \State Compute $X_{i+1}^{N,j} = X_i^{N,j} + b(t_i, X_i^{N,j}) \Delta t + \sigma(t_i, X_i^{N,j}) \Delta B_{i+1}^{N,j}$
            \EndFor
            \State Compute $\widetilde{Y}^{N,j}_n, \widehat{Z}^{N,j}_n = \mathcal{NN}_n(t_n, X_n^{N,j}; \theta_n)$
            \If{$n = N-1$}
                \State $\widehat{Y}_{N}^{N,j} = g(X_{N}^{N,j})$
            \Else
                \State $\widetilde{Y}_{n+1}^{N,j} = \mathcal{NN}_{n+1}(t_{n+1}, X_{n+1}^{N,j}; \theta^{*,1}_{n+1})$
                \State $\widehat{Y}_{n+1}^{N,j} = \min(\max(\widetilde{Y}_{n+1}^{N,j}, -f_2(t_{n+1}, X_{n+1}^{N,j})), f_1(t_{n+1}, X_{n+1}^{N,j}))$ 
            \EndIf
            \EndFor
            \State $\ell(\theta_n) = \frac{1}{M} \sum_{j=1}^M |\widehat{Y}_{n+1}^{N,j} - (\widetilde{Y}_n^{N,j} - \varphi(t_n, X_n^{N,j})\Delta t + \widehat{Z}_n^{N,j} \Delta B^{N,j}_{n+1})|^2$
            \State Update parameters $\theta_n = \theta_n - r \nabla_{\theta_n} \ell(\theta_n)$
    \EndFor
    \State Save $(\theta^{*,1}_n, \theta^{*,2}_n) = (\theta^1_n, \theta^2_n)$
\EndFor\\
\Return $(\widehat{Y}^{N}_n, \widehat{Z}^N_n)$ for $n = 0, \ldots, N$
\end{algorithmic}
\end{algorithm}


\section{Convergence Analysis}
\label{sec:conv}
The main goal of this section is to prove the convergence of the neural network-based scheme towards the solution $(Y, Z)$ of the DRBSDE \eqref{eq:drbsde_intro} governing the Dynkin game and to show that the learned stopping times converge to the true optimal stopping times.

We first examine whether the algorithm correctly approximates the value function of the Dynkin game. The following theorem ensures that, as the time discretization $N$ increases and the neural network capacity grows, the solution converges to the true one.

\begin{theorem}
\label{theo:algo}
    Let $(\widehat{Y}_{n}^N,\widehat{Z}^N_n)$ for $n=0, \ldots, N$ be neural network approximations of the DRBSDE solution $(Y_t, Z_t)$ for $t \in [0,T]$. Then the error 
    \begin{align*}
        \max_{n=0, \ldots, N-1} \mathbb{E}\left[ |Y_{t_n} -  \widehat{Y}^N_n |^2\right] +  \sum_{n=0}^{N-1} \int_{t_n}^{t_{n+1}} \mathbb{E}\left[ |Z_{t} -  \widehat{Z}^N_n |^2\right] dt
        \end{align*}
        converges towards 0 as we increase number of timesteps $N$ and the number of neural networks' hidden parameters $\theta_n$ for each $n = 0, \ldots, N$.
\end{theorem}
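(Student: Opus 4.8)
The plan is to split the total error into a \emph{time-discretization} error and a \emph{neural-network approximation} error and to bound each in turn. To this end I would introduce the auxiliary fully discrete scheme $(\bar Y_n, \bar Z_n)_{n=0}^N$ obtained by replacing the trained networks with exact conditional expectations: set $\bar Y_N = g(X_N)$ and, for $n = N-1, \dots, 0$,
\[
\bar Y_n = P_n\!\big(\mathbb{E}[\bar Y_{n+1} + \varphi(t_n, X_n)\,\Delta t \mid \mathcal{F}_{t_n}]\big), \qquad \bar Z_n = \frac{1}{\Delta t}\,\mathbb{E}[\bar Y_{n+1}\,\Delta B_{n+1} \mid \mathcal{F}_{t_n}],
\]
where $P_n(y) := \min(\max(y, -f_2(t_n, X_n)), f_1(t_n, X_n))$ projects onto the barrier interval. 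Writing $|Z_t - \widehat Z^N_n|^2 \le 2|Z_t - \bar Z_n|^2 + 2|\bar Z_n - \widehat Z^N_n|^2$ and similarly for $Y$, a triangle inequality bounds the quantity in the statement by a constant times the sum of
\[
\text{(I)}\quad \max_n \mathbb{E}\big[|Y_{t_n} - \bar Y_n|^2\big] + \sum_{n=0}^{N-1} \int_{t_n}^{t_{n+1}}\mathbb{E}\big[|Z_t - \bar Z_n|^2\big]\,dt
\]
and
\[
\text{(II)}\quad \max_n \mathbb{E}\big[|\bar Y_n - \widehat Y^N_n|^2\big] + \sum_{n=0}^{N-1}\Delta t\,\mathbb{E}\big[|\bar Z_n - \widehat Z^N_n|^2\big],
\]
using that $\bar Z_n$ and $\widehat Z^N_n$ are piecewise constant on the grid.

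For term (I) I would appeal to the classical convergence theory for discrete-time approximations of doubly reflected BSDEs. The standing hypotheses hold here: the OU dynamics \eqref{eq:OU} have affine (hence Lipschitz) coefficients, the driver $\varphi$ in \eqref{phi} is affine in $x$, and the barriers in \eqref{barriers} are smooth, deterministic, and satisfy Assumption~\ref{ass 2}, so existence and uniqueness are granted by Theorem~\ref{existence of solution DRBSDE}. Under these assumptions the $L^2$ path-regularity $\sum_n \int_{t_n}^{t_{n+1}}\mathbb{E}[|Z_t - Z_{t_n}|^2]\,dt \to 0$ holds, and the projection scheme is known to satisfy $\text{(I)} \to 0$ as $N \to \infty$. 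The two structural facts I would exploit are that the continuous solution respects the barriers, $-f_2 \le Y_t \le f_1$, so $P_n$ fixes $Y_{t_n}$, and that $P_n$ is $1$-Lipschitz, so it never amplifies the error in the backward recursion.

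For term (II) I would argue by backward induction on $n$ using the universal approximation theorem. The two schemes agree at $n=N$. At step $n$, because $\Delta B_{n+1}$ is centred and independent of $\mathcal{F}_{t_n}$, the population version of the loss in Algorithm~\ref{algo:DRBSDE} is an $L^2$-projection whose ideal minimiser over all measurable functions is exactly the conditional-expectation pair $(\bar v_n, \bar z_n)$ defining $(\bar Y_n, \bar Z_n)$ before projection. Universal approximation ensures that, as the hidden width of $\mathcal{NN}_n$ grows, the networks $(\widetilde Y^N_n, \widehat Z^N_n)$ attain this infimum up to a residual $\varepsilon_n^{\mathcal{N}} := \mathbb{E}[|\widetilde Y^N_n - \bar v_n(X_n)|^2] + \Delta t\,\mathbb{E}[|\widehat Z^N_n - \bar z_n(X_n)|^2]$ that can be made arbitrarily small. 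A one-step stability estimate — using once more that the final $\min$--$\max$ projection is $1$-Lipschitz and that the driver is Lipschitz — then gives
\[
\mathbb{E}\big[|\bar Y_n - \widehat Y^N_n|^2\big] + \Delta t\,\mathbb{E}\big[|\bar Z_n - \widehat Z^N_n|^2\big] \le (1 + C\Delta t)\,\mathbb{E}\big[|\bar Y_{n+1} - \widehat Y^N_{n+1}|^2\big] + C\,\varepsilon_n^{\mathcal{N}},
\]
and iterating with $(1+C\Delta t)^N \le e^{CT}$ controls term (II) by $e^{CT}\sum_n \varepsilon_n^{\mathcal{N}}$, which vanishes as the network capacities grow.

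The step I expect to be the main obstacle is reconciling the two limits: as $N \to \infty$ the number of summands in $\sum_n \varepsilon_n^{\mathcal{N}}$ grows, so I must increase the per-step network capacity fast enough that this sum still tends to zero despite the amplification factor $e^{CT}$, which requires keeping the Lipschitz constant $C$ uniform in $N$ while composing the regression step with the reflection $P_n$ at every level of the backward recursion. A secondary gap is that the estimate above is stated for the \emph{population} loss, whereas Algorithm~\ref{algo:DRBSDE} minimises the empirical loss over $M$ samples by gradient descent; closing this would require standard statistical-learning concentration bounds together with an assumption that the optimiser reaches a near-global minimum, which I would invoke rather than establish.
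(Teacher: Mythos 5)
Your proposal follows essentially the same route as the paper's (sketched) proof: the same decomposition into a time-discretization error for the projected conditional-expectation scheme $(\bar Y_n,\bar Z_n)$ (handled by citing the classical discrete DRBSDE convergence results) and a network-regression error (handled by the universal approximation theorem, the $1$-Lipschitz barrier projection, and a $(1+C\Delta t)$ backward recursion closed by Gronwall). The one quantitative difference is that the paper's one-step estimate carries a $\tfrac{1}{\Delta t}\,\varepsilon^{\NN,y}_n$ amplification of the $y$-regression error coming from Young's inequality, which is exactly the ``reconciling the two limits'' issue you flag at the end and which the paper likewise resolves only by requiring the network capacity to grow with $N$.
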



\begin{proof}
Let us now present a sketch of the proof. Following the same approach as in \cite{bayraktar, pham}, the total approximation error can be decomposed into two main components: (i) an algorithmic error, representing the neural network approximation of the solution, and (ii) a discretization error, which results from the time discretization of the DRBSDE. \\

In the first two steps, we will establish convergence results for the algorithmic and time-discretization errors. Next, we will derive a bound for the error between the continuous process $Y_t$ and its time discretization $Y_n^N$. Then, we will derive an estimate for the difference between the discretized process $Y_n^N$ and its neural network approximation $\widehat{Y}_n^N$. By combining these bounds, we will obtain overall convergence for the total error. A similar reasoning will be applied to the process $Z_t$, yielding analogous bounds for its approximation.

\begin{enumerate}
    \item \textbf{Neural Network Approximation Error:} Define the neural network approximation errors as 
\begin{equation}
\begin{aligned}
   & \varepsilon_n^{\NN, y} \coloneqq \inf_{\theta_n^1} \bE \left[ |\hat{y}_n - \widetilde{Y}^N_n(X_n^N ; \theta_n^1)|^2 \right],\\ &\varepsilon_n^{\NN, z} \coloneqq \inf_{\theta_n^2} \bE \left[ |\hat{z}_n - \hat{Z}^N_n(X_n^N ; \theta_n^1)|^2 \right],
   \label{eq:nnerrors}
\end{aligned}
\end{equation}
where the auxiliary functions $\hat{y}_n,\hat{z}_n$ are defined as 
\begin{equation} 
\hat{y}_n \coloneqq \bE [ \hat{Y}^N_{n+1} | \mathcal{F}_{t_n}] + \varphi(t_n, X_n^N) \Delta t, 
\label{eq:y}
\end{equation}
\begin{equation} 
\hat{z}_n\coloneqq \frac{1}{ \Delta t}\bE [ \hat{Y}^N_{n+1} \Delta B_n| \mathcal{F}_{t_n}]. 
\label{eq:z}
\end{equation}
By the Universal Approximation Theorem, the errors in \eqref{eq:nnerrors} converge to zero as the number of parameters of the neural network go to infinity.\\

\item \textbf{Time Discretization Errors:} We denote by $Y_n^N$ the discrete approximation of $Y_t$ at time $t_n$. At the terminal time, this is given by \begin{equation*}
    Y_N^N= g(X_N^N).
\end{equation*}  For earlier time steps, the value $Y_n^N$ is computed recursively
\begin{equation}
    Y^N_n = \min(\max(\vardbtilde{Y}^N_{n}, - f_2(t_n, X_n^{N^j})),f_1(t_n, X^N_{n})), \qquad n = N-1, \ldots, 0.
    \label{eq:minmax2}
\end{equation}
Here $\vardbtilde{Y}_n^N$ follows the formulation stated in \cite{bouchard2004discrete}. Specifically, it holds  
\begin{equation} 
\vardbtilde{Y}_n^N \coloneqq \bE [ Y_{n+1}^N | \mathcal{F}_{t_n}] + \varphi(t_n, X_n^N) \Delta t,
\label{eq:Y}
\end{equation}
\begin{equation} 
Z_n^N \coloneqq \frac{1}{ \Delta t}\bE [Y_{n+1}^N \Delta B_n| \mathcal{F}_{t_n}]. 
\label{eq:Z}
\end{equation}
By using established results in the literature (e.g. \cite{chassagneux2009discrete, pham}), under Assumption \ref{ass1} and  Assumption \ref{ass 2}, the following convergenge results hold

\begin{equation}
    \max_{n \in \{0,1,\dots,N-1\}} \bE \left[ |\widetilde{Y}_{t_n} - \vardbtilde{Y}^N_{n}|^2 + |Y_{t_n} - Y^N_{n}|^2 \right] \to 0 \quad \text{as } N \to \infty,
    \label{eq:err1}
\end{equation}
\begin{equation}
    \bE \left [\sum_{n=0}^{N-1} \int_{t_n}^{t_{n+1}} | Z_t - Z^N_{n}|^2 \right] dt \to 0 \quad \text{as } N \to \infty. 
    \label{eq:err2}
\end{equation}
\item \textbf{Error Bound for $\mathbf{Y_t}$:} 
By \eqref{eq:minmax} and \eqref{eq:minmax2}, we observe
\begin{equation}
   \bE \left [ | Y_{n}^N - \widehat{Y}^N_n |^2 \right]\leq  \bE \left[| \vardbtilde{Y}^N_{n} - \widetilde{Y}^N_n |^2 \right].
   \label{eq:Yt_bound}
\end{equation}
By using \eqref{eq:y}, \eqref{eq:Y} and Young's inequality, we derive the following upper bound for sufficiently small $\Delta t$:
\begin{equation}
   \bE \left[| \vardbtilde{Y}^N_{n} - \widetilde{Y}^N_n |^2 \right]  \le (1 + C \Delta t) \bE \left [ | Y_{n+1}^N - \widehat{Y}^N_{n+1} |^2 \right] + \frac{C}{\Delta t} \bE \left[| \widetilde{Y}^N_{n} - \hat{y}_{n} |^2 \right]. 
   \label{eq:continuation_bound}
\end{equation}
From now on, we denote by $C$ a positive constant which is independent of the neural network structure and may vary from line to line, depending on the specific estimate or bound being used.\\

One would perhaps feel inclined to bound, under the optimal choice of trainable parameters $\theta^1_n$, the last term in inequality \eqref{eq:continuation_bound} by $\varepsilon^{\NN,y}_n$, but we need to be careful. Recall that the optimal parameters $\theta_n^{*,1}$ obtained in the Algorithm \ref{algo:DRBSDE} do not, in general, coincide with the ones required to achieve $\varepsilon^{\NN,y}_n$. However, through analysis of the loss function $\ell(\theta_n)$ in the Algorithm \ref{algo:DRBSDE}, it is possible to obtain an estimation

\begin{equation*}
\begin{aligned}
       \bE \left[ | \widetilde{Y}^N_n- \hat{y}_n|^2 \right]\le \varepsilon^{\NN,y}_n + \Delta t \varepsilon^{\NN,z}_n.
\end{aligned}
\end{equation*}
By combining this with \eqref{eq:Yt_bound} and \eqref{eq:continuation_bound}
\begin{equation*}
  \bE \left [ | Y_{n}^N - \widehat{Y}^N_n |^2 \right]  \le (1 + C \Delta t) \bE \left [ | Y_{n+1}^N - \widehat{Y}^N_{n+1} |^2 \right] + C( \frac{1}{\Delta t}  \varepsilon^{\NN,y}_n + \varepsilon^{\NN,z}_n). 
\end{equation*}
By induction on the right hand side and incorporating the convergence result \eqref{eq:err1}, we established an error bound for $Y_t$, which ensures that the error tends to zero as the number of timesteps and network parameters increase.

\item \textbf{Error bound for $\mathbf{Z_t}$}:  One should also verify that the $Z$ component is well approximated. Using similar reasoning as we did for the $Y$ component, we get an estimation
$$
 \Delta t \bE \left[ |  \widehat{Z}_n^{N} - \hat{z}_n|^2 \right] \leq \varepsilon^{\NN,y}_n + \Delta t \varepsilon^{\NN,z}_n.
$$

By using triangular inequality it follows
\begin{equation*}
    \Delta t \bE \left [ | Z_n^N - \widehat{Z}_n^N |^2 \right ] \le 2 \Delta t \bE \left [ | Z_n^N - \widehat{z}_n^N |^2 \right ] + C(\varepsilon^{\NN,y}_n + \Delta t \varepsilon^{\NN,z}_n). 
\end{equation*}
Now by \eqref{eq:z}, \eqref{eq:Z} and Cauchy-Swartz inequality, we get
\begin{equation*}
  \sum_{n=0}^{N-1} \Delta t \bE \left [ | Z_n^N - \widehat{z}_n^N |^2 \right ] \le d\sum_{n=0}^{N-1}  \bE \left [ | Y_{n+1}^N - \widehat{Y}_{n+1}^N |^2 \right ] \le C \sum_{n=0}^{N-1}( \frac{1}{\Delta t}  \varepsilon^{\NN,y}_n + \varepsilon^{\NN,z}_n).
\end{equation*}
Similarly to what is done above for $Y$, the error between the continuous process $Z_t$ and its neural network approximation is made up of the above quantity and the discrete approximation error coming from \eqref{eq:err2}. 
\end{enumerate}

\end{proof}

We now verify that the stopping times generated by the algorithm converge to the true optimal stopping times. The following theorem establishes that as $N$ increases, they converge to the saddle point of the game on $[0,T]$, aligning with their theoretical counterparts. \\

To prove the theorem, we first present the following lemma.

\begin{lemma}\label{lemma:cont}
    For $a \in \bR$, define the map $F : C([0,T]) \to [0,T]$ given by 
    $$
    F(g) := \inf\{t \geq 0 | g(t) \geq a \} \wedge T,
    $$
    where  \( C([0,T]) \) is the space of continuous functions on \([0,T]\).
    If $g \in C([0,T])$ does not have a local maximum at $F(g)$, then $F$ is continuous at $g$.
\end{lemma}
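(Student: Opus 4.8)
The plan is to equip $C([0,T])$ with the uniform norm and to show that $F(g_n) \to F(g)$ whenever $g_n \to g$ uniformly, by establishing lower and upper semicontinuity of $F$ at $g$ separately. Write $\tau := F(g)$. First I would record the structure of $\tau$: if the level $a$ is attained by $g$, then by continuity the infimum is achieved, $g(\tau) = a$, and $g(t) < a$ for all $t \in [0,\tau)$; if $a$ is never attained then $\tau = T$ and $\sup_{[0,T]} g < a$. I would also note that, when $a$ is attained, the hypothesis forces $\tau < T$, since $\tau = T$ with $g(T) = a$ would make the endpoint $T$ a (left-sided) local maximum.

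The lower bound $\liminf_n F(g_n) \ge \tau$ I expect to be automatic, requiring no hypothesis on $g$. The argument is by contradiction: pass to a subsequence with $F(g_{n_k}) \to L < \tau$, observe that $s_k := F(g_{n_k}) < T$ for large $k$ so that $g_{n_k}(s_k) = a$ by continuity of $g_{n_k}$, and then use $|g(L) - g_{n_k}(s_k)| \le \|g - g_{n_k}\|_\infty + |g(s_k) - g(L)| \to 0$ (uniform convergence plus continuity of $g$ at $L$) to conclude $g(L) \ge a$. This contradicts $g(t) < a$ on $[0,\tau)$.

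The upper bound $\limsup_n F(g_n) \le \tau$ is where the no-local-maximum hypothesis enters, and this is the main obstacle. The key observation I would isolate is that ``$g$ has no local maximum at $\tau$'' is, in this setting, equivalent to a clean-crossing property: for every $\varepsilon > 0$ there exists $s \in (\tau, \tau+\varepsilon)$ with $g(s) > a$. Indeed, since $g < a$ strictly on $[0,\tau)$, any point near $\tau$ witnessing the failure of a local maximum must lie to the right of $\tau$ and satisfy $g(s) > g(\tau) = a$. Given such an $s$ with $g(s) = a + \delta$, $\delta > 0$, uniform convergence forces $g_n(s) \ge a$ for all large $n$, whence $F(g_n) \le s < \tau + \varepsilon$; letting $\varepsilon \downarrow 0$ gives the bound.

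Finally I would check that the degenerate configurations fit this scheme. When $a$ is never attained, $\sup g < a$ makes $F(g_n) = T = \tau$ eventually, so continuity holds directly; the only problematic configuration, the ``touching'' case where $\sup g = a$ is attained, is exactly the one excluded by the hypothesis, since the touching point is then a local maximum coinciding with $F(g)$. Combining the two semicontinuity bounds yields $F(g_n) \to \tau = F(g)$, i.e. continuity of $F$ at $g$. The conceptual crux throughout is recognizing that the no-local-maximum condition is precisely the transversality needed so that nearby paths cross the barrier at essentially the same time.
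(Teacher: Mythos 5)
Your proposal is correct and follows essentially the same route as the paper's proof: the no-local-maximum hypothesis is used only for the upper semicontinuity bound, via a point $t_0 \in (F(g), F(g)+\varepsilon)$ with $g(t_0) > a$, while lower semicontinuity comes from $g$ being strictly below $a$ on compact subsets of $[0, F(g))$. The only difference is cosmetic: you argue the lower bound by contradiction with subsequences, whereas the paper uses the direct estimate $\sup_{[0, F(g)-\varepsilon]} g < a$; your handling of the degenerate cases is, if anything, slightly more explicit than the paper's.
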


\begin{proof}
    Suppose $g$ does not have a local maximum at $F(g).$ For each $ \varepsilon >0$, there exists $t_0 \in (F(g), F(g) + \varepsilon)$ such that $g(t_0) > a$. Let $\lVert \widetilde{g} -g \rVert_{\infty} < g(t_0)-a$. Then, we have $\widetilde{g}(t_0) \geq a$ which yields $F(\widetilde{g}) \leq t_0$. Consequently, $F(\widetilde{g}) - F(g) \leq t_0 - F(g) < \varepsilon$. 

    Now, set $t_1 := F(g) - \varepsilon$ and $\bar{g} := \sup_{t \in [0,t_1]} g(t)$. By the definition of $F(g)$, we have $\bar{g}<a$. If $\lVert \widetilde{g} -g \rVert_{\infty} < a-\bar{g}$, then $F(\widetilde{g}) \geq t_1$, which implies $F(g) - F(\widetilde{g}) \leq F(g) - t_1 = \varepsilon$. Hence, $F$ is continuous in $g$.  
\end{proof}

\begin{theorem}\label{thm:time-conv}
    Assume $f_1,f_2 \in C^{1,2}([0,T]\times \mathbb{R}^d)$ and define 
    \begin{align*}
    &\widehat{\tau}^N_2 := \inf \{t_n \geq 0 : \widehat{Y}_{n}^N \leq -f_2(t_n, X_n^{N}) \} \wedge T, \\
    &\widehat{\tau}^N_1 := \inf \{t_n \geq 0 : \widehat{Y}_{n}^N \geq f_1(t_n, X_n^{N}) \} \wedge T.
    \end{align*}
    Then, $ \mathbb{E} \left[ |\widehat{\tau}^N_2 - \tau^*_{2,0}|^2 \right]$ and $\mathbb{E}  \left[ |\widehat{\tau}^N_1 - \tau^*_{1,0}|^2 \right]$ converge to $0$ as the number of timesteps $N$ and the number of neural networks' hidden parameters $\theta_n$ (for $n = 0, \ldots, N$) increase.
\end{theorem}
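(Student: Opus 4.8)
The plan is to realise each stopping time as the image of a continuous path under the hitting-time map $F$ of Lemma~\ref{lemma:cont}, and then push the $L^2$ convergence of Theorem~\ref{theo:algo} through $F$. Define the continuous processes
\[
G_s := Y_s - f_1(s, X_s), \qquad H_s := -\bigl(Y_s + f_2(s, X_s)\bigr), \qquad s \in [0,T].
\]
Since $-f_2 \le Y \le f_1$ we have $G \le 0$ and $H \le 0$, and with threshold $a=0$ one checks $\tau^*_{1,0} = F(G)$ and $\tau^*_{2,0} = F(H)$. On the discrete side, because $\widehat{Y}^N_n = \min(\max(\widetilde{Y}^N_n, -f_2), f_1)$ satisfies $\widehat{Y}^N_n \ge f_1(t_n, X^N_n)$ \emph{exactly} when the unprojected output $\widetilde{Y}^N_n$ does, the approximate hitting time $\widehat{\tau}^N_1$ is the first grid crossing of $0$ by $\widehat{G}^N_n := \widetilde{Y}^N_n - f_1(t_n, X^N_n)$; writing $\widehat{G}^N$ for its piecewise-linear interpolant, one has $|\widehat{\tau}^N_1 - F(\widehat{G}^N)| \le \Delta t \to 0$, and symmetrically for $\widehat{\tau}^N_2$ via $\widehat{H}^N$. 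It then suffices to show $F(\widehat{G}^N) \to F(G)$ and $F(\widehat{H}^N) \to F(H)$; since all stopping times lie in $[0,T]$, dominated convergence upgrades the resulting convergence in probability to the claimed $L^2$ convergence.

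Next I would establish sup-norm convergence $\widehat{G}^N \to G$ and $\widehat{H}^N \to H$ in probability (hence a.s.\ along a subsequence). The grid error is controlled by Theorem~\ref{theo:algo}, i.e.\ $\max_n \bE[|Y_{t_n} - \widehat{Y}^N_n|^2] \to 0$; combined with the $C^{1,2}$-regularity (local Lipschitz continuity) of $f_1,f_2$ and the $L^2$ convergence of the Euler scheme $X^N \to X$, this gives $\max_n \bE[|G_{t_n} - \widehat{G}^N_n|^2] \to 0$, and the oscillation between grid points is absorbed by the $L^2$ modulus of continuity of the semimartingale $Y$ and of $X$. A genuine point of care is that Theorem~\ref{theo:algo} controls $\max_n \bE[\,\cdot\,]$ rather than $\bE[\max_n \,\cdot\,]$, so upgrading the grid-wise bound to a uniform-in-$n$, and thence to a sup-norm, statement must proceed through a subsequence together with tightness/equicontinuity of the interpolated paths.

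I would then apply Lemma~\ref{lemma:cont} pathwise on the full-probability event where subsequential sup-norm convergence holds. The lower bound $\liminf_N F(\widehat{G}^N) \ge F(G)$ is the easy half: $G < 0$ strictly on $[0,\tau^*_{1,0})$, so $\sup_{[0,\tau^*_{1,0}-\delta]} G =: -c < 0$, and uniform closeness forces $\widehat{G}^N < 0$ there for large $N$, giving $F(\widehat{G}^N) \ge \tau^*_{1,0} - \delta$ for every $\delta$. The reverse inequality is where the hypothesis of Lemma~\ref{lemma:cont} — that $G$ has no local maximum at $F(G)$ — is required.

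The main obstacle is verifying this no-local-maximum (transversality) hypothesis almost surely on $\{\tau^*_{1,0} < T\}$. Because $Y$ is reflected downward at the upper barrier, $G \le 0$ with $G(\tau^*_{1,0}) = 0$, so the \emph{limit} path $G$ does have a (global, hence local) maximum at $F(G)$, and the lemma cannot be invoked for $G$ directly. The way out is to exploit that $\widehat{\tau}^N_1$ is the crossing time of the \emph{unprojected} $\widetilde{Y}^N$, which is not confined below $f_1$: under nondegeneracy of the diffusion coefficient $\sigma$ one argues that the reflection term $C$ is active on a sequence of times accumulating at $\tau^*_{1,0}$ from the right, so the one-step continuation values $\widetilde{Y}^N_n$ strictly exceed $f_1(t_n, X^N_n)$ at grid points arbitrarily close to $\tau^*_{1,0}$. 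Equivalently, $\tau^*_{1,0}$ is a \emph{regular} point for the obstacle and the free (pre-reflection) process crosses the $C^{1,2}$ barrier transversally — a law-of-the-iterated-logarithm statement for the nondegenerate martingale part of $Y$. This is the delicate step that renders $F$ effectively continuous along the approximating sequence despite the tangential contact of the limit, and it is precisely where the assumptions $f_1,f_2 \in C^{1,2}$ and the ellipticity of $\sigma$ are used. The symmetric argument for $H$ and the barrier $-f_2$ handles $\widehat{\tau}^N_2$, and combining the two inequalities yields $F(\widehat{G}^N)\to\tau^*_{1,0}$ and $F(\widehat{H}^N)\to\tau^*_{2,0}$ in probability, whence bounded convergence gives the stated $L^2$ convergence.
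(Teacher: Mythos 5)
Your overall architecture --- piecewise-linear interpolation of the grid values, the hitting-time map of Lemma~\ref{lemma:cont}, the continuous mapping theorem, and boundedness of all stopping times by $T$ to upgrade to $L^2$ --- is essentially the paper's, and your diagnosis of the central difficulty is exactly right: $G = Y - f_1(\cdot,X)$ satisfies $G \le 0$ with $G_{\tau^*_{1,0}} = 0$, so the limit path has a local maximum at its own hitting time and Lemma~\ref{lemma:cont} cannot be applied to it. Where you and the paper part ways is in how this tangency is circumvented. The paper never uses $G$ as the limit object: it works throughout with the continuation-value process $\widetilde{Y}$ of \eqref{eq:bem}, which on each subinterval $[t_n,t_{n+1})$ is a genuine forward SDE with no reflection terms, defines $\widetilde{\tau}_1$ as the first time $\widetilde{Y}_t \ge f_1(t,X_t)$, and observes that $|\tau^*_{1,0}-\widetilde{\tau}_1|\le \Delta t$ almost surely by construction. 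The no-local-maximum hypothesis is then verified for $\widetilde{Y}-f_1(\cdot,X)$ via It\^o and Girsanov, which is legitimate precisely because $\widetilde{Y}$ is not confined below the barrier. This distinction also matters for your sup-norm convergence step: the unprojected network outputs $\widetilde{Y}^N_n$ converge, via \eqref{eq:err1}, to $\widetilde{Y}_{t_n}$ rather than to $Y_{t_n}$; the two differ by the one-step reflection increments, which is exactly the mechanism that makes the approximating paths cross the barrier strictly.

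The genuine gap is in your resolution of the tangency. The claim that the reflection term $C$ is active on a sequence of times accumulating at $\tau^*_{1,0}$ from the right, so that the continuation values strictly exceed the barrier at nearby grid points by a law-of-the-iterated-logarithm/transversality argument, is stated as something ``one argues'' but is not carried out, and it imports an ellipticity hypothesis on $\sigma$ (and implicitly nondegeneracy of $Z$) that is not among the theorem's assumptions. As written, the hardest step of the proof is asserted rather than proved. (To be fair, the paper's own Girsanov step also tacitly requires nondegeneracy of the martingale part of $\widetilde{Y}-f_1(\cdot,X)$, so this weakness is shared; but the paper's reduction to $\widetilde{Y}$ replaces your accumulation-of-reflection claim by the elementary pathwise bound $|\tau^*_{1,0}-\widetilde{\tau}_1|\le\Delta t$.) If you replace your $G$-based limit object by $\widetilde{Y}-f_1(\cdot,X)$ and justify that $\Delta t$ bound relating the two hitting times, your argument closes along the paper's lines.
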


\begin{proof}
    We analyze the convergence of $\widehat{\tau}^N_1$, with the result for $\widehat{\tau}^N_2$ following analogously. 

    Let us start by denoting the continuous linear extension of the approximation $X_{n}^N$ for $n=0,\ldots, N$ as 
    $$
    X_t^{C,N} = X_{n}^N + \frac{t-t_n}{t_{n+1}-t_n}(X_{n+1}^N - X_{n}^N), \quad t \in [t_n,t_{n+1}],
    $$
    for $n = 0, \ldots, N-1.$ It has been shown in \cite{em-orig} that the sequence of continuous processes $X^{C,N}$ converges to $X$ in $\mathbb{L}^2(C[0,T])$.
    
    Now, recall the piecewise continuous process $\widetilde{Y}$ defined in Equation \eqref{eq:bem}, and set $$\widetilde{\tau}_1 := \inf \{t \geq 0 : \widetilde{Y}_{t} \geq f_1(t, X_t) \} \wedge T.$$

    By construction,
    \begin{align}\label{eq:t-tt}
        | \tau^*_{1,0} - \widetilde{\tau}_1| \leq \Delta t,
    \end{align}
    almost surely. 
    
   Similarly, define the continuous linear extension of $\widetilde{Y}_{n}^N$ for $n=0,\ldots, N$ as 
    $$
    \widetilde{Y}_t^{C,N} = \widetilde{Y}_{n}^N + \frac{t-t_n}{t_{n+1}-t_n}(\widetilde{Y}_{n+1}^N - \widetilde{Y}_{n}^N), \quad t \in [t_n,t_{n+1}],
    $$
    for $n = 0, \ldots, N-1.$ Furthermore, we put 
    \begin{align*}
    \widetilde{\tau}^{C,N}_1 := \inf \{t \geq 0 : \widetilde{Y}_t^{C,N} \geq f_1(t,X^{C,N}_t) \} \wedge T.
    \end{align*} 
    Again, by construction 
    \begin{align}\label{eq:th-tt}
        | \widetilde{\tau}^{C,N}_1 - \widehat{\tau}^{N}_1| \leq \Delta t,
    \end{align}
    almost surely. 
    
    Due to growth conditions in Assumption \ref{ass 2}, we have that 
    \begin{equation}\label{eq:uni-tilde}
        \max_{n=0,\ldots,N-1} \mathbb{E}\left[ \sup_{t \in [t_n, t_{n+1})} |\widetilde{Y}_t - \widetilde{Y}_{t_n}|^2 \right] \to 0 \text{ as } N \to \infty.
    \end{equation}

    This leads to 
    \begin{align*}
        \max_{n=0,\ldots,N-1} &\bE\left[ \sup_{t \in [t_n, t_{n+1})} |\widetilde{Y}_t - \widetilde{Y}^{C,N}_t|^2 \right] \\
         \leq \max_{n=0,\ldots,N-1} & \bE\left[ \sup_{t \in [t_n, t_{n+1})} \left\{|\widetilde{Y}_t - \widetilde{Y}^{N}_n|^2 + |\widetilde{Y}_t - \widetilde{Y}^{N}_{n+1}|^2 \right\} \right] \\
        \leq  \max_{n=0,\ldots,N-1} &\bE\left[ \sup_{t \in [t_n, t_{n+1})} \left\{|\widetilde{Y}_t - \widetilde{Y}^{N}_n|^2 + |\widetilde{Y}_t - \widetilde{Y}_{t_{n+1}} |^2 + |\widetilde{Y}_{t_{n+1}} - \widetilde{Y}^{N}_{n+1}|^2 \right\} \right],
    \end{align*}
    which converges to 0 as $N \to \infty$ due to \eqref{eq:err1} and \eqref{eq:uni-tilde}. Since $\widetilde{Y}$ is continuous on $[t_n, t_{n+1})$ for each $n=0, \ldots, N-1$, we can paraphrase the above in the following way: for each $n$, the random variable $\widetilde{Y}^{C,N}:\Omega \to C([t_n, t_{n+1}))$ converges in $\mathbb{L}^2$ to $\widetilde{Y} : \Omega \to C([t_n, t_{n+1}))$ as $N \to \infty$. 
    
    Let us define the bounded maps $F_n : C([t_n, t_{n+1})) \to [t_n,t_{n+1}] \cup \{ \infty \}$ by
    $$
    F_n(g_n) = \inf\{t \in [t_n, t_{n+1})\; |\; g_n(t) \geq 0 \}, \quad n = 0, \ldots, N-1,\text{}
    $$
    and set $F : C([0,T]) \to [0,T]$ as $F(g) := \min \{ F_0(g), \ldots , F_{N-1}(g),T \},$ where $g$ is a piecewise continuous function 
    $$g(t) = \sum_{n=0}^{N-1}g_n(t) \mathbbm{1}_{[t_n, t_{n+1})}(t).$$ 

    It is clear that $F(\widetilde{Y}_{\cdot} - f_1(\cdot, X_{\cdot})) = \widetilde{\tau}_1$ and $F(\widetilde{Y}_{\cdot}^{C,N} - f_1(\cdot,X_{\cdot}^{C,N})) = \widetilde{\tau}_1^{C,N}.$ Convergence of $\widetilde{\tau}_1^{C,N}$ in $\mathbb{L}^2$ towards $\widetilde{\tau}_1$ is ensured by the continuous mapping theorem if $F$ is continuous at $\widetilde{Y}_{\cdot} - f_1(\cdot, X_{\cdot})$ almost surely. Since $\mathbb{P}(\Tilde{\tau_1}\in \pi)=0$ it suffices to show continuity of $F_n$ for each $n = 0, \ldots, N-1.$ 

  If $\widetilde{Y}_{t} - f_1(t, X_{t})$ is strictly positive or negative on $[t_n, t_{n+1})$, continuity of $F_n$ is clear. Let us now investigate a non-trivial case where the above function passes through zero, i.e., $F_n(\widetilde{Y}_{t} - f_1(t, X_{t})) \in [t_n, t_{n+1})$. 

    First, note that $\widetilde{Y}$ can be written as a forward SDE on $[t_n, t_{n+1})$: 
    $$
    \widetilde{Y}_t = \widetilde{Y}_{t_n} -  \displaystyle\int_{t_n}^{t} \varphi(s, X_s) ds
   + \displaystyle\int_{t_n}^{t} Z_s dB_s.
    $$
    Since $f_1 \in C^{1,2}([0,T]\times \mathbb{R}^d)$, applying It\^{o} formula allows us to express $\widetilde{Y}_{t} - f_1(t, X_{t})$ as an SDE as well. It is a well-known fact that the probability of Brownian motion having a local maximum at finite stopping time $\tau$ equals $0$. The same claim can be extended to SDEs due to the Girsanov theorem, which means that $\widetilde{Y}_{\cdot} - f_1(\cdot, X_{\cdot})$ does not have a local maximum at $F_n(\widetilde{Y}_{\cdot} - f_1(\cdot, X_{\cdot}))$ almost surely. By Lemma \ref{lemma:cont}, $F_n$ is continuous at $Y$ almost surely for each $n$, which means that $\bE \left[ |\widetilde{\tau}^{C, N}_1 - \widetilde{\tau}^1|^2 \right] \to 0$ as $N \to \infty.$ Due to estimations in \eqref{eq:t-tt} and \eqref{eq:th-tt} we get that 
    $\widehat{\tau}^{ N}_1$ converges to $\tau^*_{1,0}$ in $\mathbb{L}^2$ which concludes the proof. 
\end{proof}

We have the following corollary.

\begin{corollary}
    Using notations and assumptions of Theorem \ref{thm:time-conv}, we have that for each $x \in \mathbb{R}^d$
    $$
    |V(0,x) - J_{0,x}(\widehat{\tau}^{ N}_1, \widehat{\tau}^{ N}_2) |
    $$
    converges towards 0 as we increase the number of timesteps $N$ and the number of neural networks' hidden parameters $\theta_n$ for each $n = 0, \ldots, N$.
    \label{cor:value}
\end{corollary}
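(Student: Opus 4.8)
The plan is to reduce the statement to the continuity of the payoff functional $J_{0,x}$ in its two stopping-time arguments and then invoke Theorem~\ref{thm:time-conv}. Since $(\tau^*_{1,0},\tau^*_{2,0})$ is a saddle point of the game, we have the identity $V(0,x)=J_{0,x}(\tau^*_{1,0},\tau^*_{2,0})$, so that
\[
|V(0,x)-J_{0,x}(\widehat{\tau}^N_1,\widehat{\tau}^N_2)|=|J_{0,x}(\tau^*_{1,0},\tau^*_{2,0})-J_{0,x}(\widehat{\tau}^N_1,\widehat{\tau}^N_2)|,
\]
and it suffices to prove that the right-hand side tends to $0$. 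By Theorem~\ref{thm:time-conv}, $\widehat{\tau}^N_i\to\tau^*_{i,0}$ in $\mathbb{L}^2$, hence in probability; by the subsequence principle it is enough to show that along any subsequence on which $\widehat{\tau}^N_i\to\tau^*_{i,0}$ almost surely, together with the a.s. uniform convergences $X^{C,N}\to X$ and $\widetilde{Y}^{C,N}\to\widetilde{Y}$ in $C([0,T])$ extracted from the $\mathbb{L}^2(C[0,T])$ convergence established in the proof of Theorem~\ref{thm:time-conv}, the pathwise payoff converges.

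First I would treat the running-cost term. Using $|a\wedge b-c\wedge d|\le|a-c|+|b-d|$ gives $|\widehat{\tau}^N_1\wedge\widehat{\tau}^N_2-\tau^*_{1,0}\wedge\tau^*_{2,0}|\to 0$ a.s., and since $s\mapsto\varphi(s,X_s)$ is continuous on the compact $[0,T]$ it is pathwise bounded, so $\int_0^{\widehat{\tau}^N_1\wedge\widehat{\tau}^N_2}\varphi(s,X_s)\,ds$ converges a.s. to $\int_0^{\tau^*_{1,0}\wedge\tau^*_{2,0}}\varphi(s,X_s)\,ds$ (one also replaces the simulated paths $X^{C,N}$ by $X$ using the uniform convergence and the polynomial growth of $\varphi$). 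The delicate part is the convergence of the penalty terms $f_1(\widehat{\tau}^N_1,X_{\widehat{\tau}^N_1})\mathbbm{1}_{\{\widehat{\tau}^N_1\le\widehat{\tau}^N_2,\widehat{\tau}^N_1<T\}}$ and $f_2(\widehat{\tau}^N_2,X_{\widehat{\tau}^N_2})\mathbbm{1}_{\{\widehat{\tau}^N_2<\widehat{\tau}^N_1\}}$, because the indicators are discontinuous on $\{\tau^*_{1,0}=\tau^*_{2,0}\}$ and on $\{\tau^*_{i,0}=T\}$.

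The main obstacle is to control these indicator discontinuities, and I would exploit the strict positivity of the penalties. Since $f_1+f_2>0$, the two barriers $f_1$ and $-f_2$ never coincide, so the event $\{\tau^*_{1,0}=\tau^*_{2,0}<T\}$ is empty and $\{\tau^*_{1,0}=\tau^*_{2,0}\}=\{\tau^*_{1,0}=\tau^*_{2,0}=T\}$, on which both penalty indicators vanish. On $\{\tau^*_{1,0}<\tau^*_{2,0}\}$ (resp. $\{\tau^*_{2,0}<\tau^*_{1,0}\}$) the strict ordering is preserved for large $N$, so the indicators converge a.s. to their limits, and combined with the continuity of $f_1,f_2$ and of the paths the penalty terms converge. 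The boundary case $\{\tau^*_{1,0}=T\}$ is handled by a gap argument: there the continuous map $s\mapsto Y_s-f_1(s,X_s)$ is strictly negative on all of $[0,T]$ (recall $Y_T=g(X_T)$ lies strictly inside the barriers, as in the CfD where $g\equiv 0$ and $f_1(T)>0$), hence bounded away from $0$ by compactness; the a.s. uniform convergence of the path approximations then rules out any barrier crossing once $N$ is large, forcing $\widehat{\tau}^N_1=T$ and the corresponding indicator to equal $0$, in agreement with the limit. The terminal term $g(X_T)\mathbbm{1}_{\{\tau_1\wedge\tau_2=T\}}$ (absent when $g\equiv0$) is treated identically on the same corner event.

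Finally, a.s. convergence of the pathwise payoff is upgraded to convergence of the expectation by dominated convergence: all the data $\varphi,f_1,f_2,g$ have at most polynomial growth and $\mathbb{E}[\sup_{t\le T}|X_t|^p]<\infty$, so the integrand is bounded, uniformly in $(\tau_1,\tau_2)$, by a fixed integrable random variable. This yields $J_{0,x}(\widehat{\tau}^N_1,\widehat{\tau}^N_2)\to J_{0,x}(\tau^*_{1,0},\tau^*_{2,0})=V(0,x)$ along the subsequence, and since every subsequence admits such a further subsequence, the whole sequence converges, proving the corollary. The principal difficulty throughout is the indicator analysis at the corner $\{\tau^*_{1,0}=\tau^*_{2,0}=T\}$ and the boundary $\{\tau^*_{i,0}=T\}$, which is precisely where the strict separation of the barriers and the uniform path convergence are indispensable.
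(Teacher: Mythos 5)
Your proposal follows the same overall route as the paper's proof: identify $V(0,x)=J_{0,x}(\tau^*_{1,0},\tau^*_{2,0})$ via the saddle point, invoke Theorem~\ref{thm:time-conv} for convergence of the stopping times, and conclude by continuity of $J_{0,x}$ in its stopping-time arguments. The difference is one of rigor rather than strategy: the paper's proof is two sentences and simply \emph{asserts} that $J_{0,x}$ ``is a continuous map of exit times,'' whereas that assertion is exactly the nontrivial point, since the indicators $\mathbbm{1}_{\{\tau_1\le\tau_2,\,\tau_1<T\}}$ and $\mathbbm{1}_{\{\tau_2<\tau_1\}}$ are discontinuous functionals of $(\tau_1,\tau_2)$. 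Your treatment supplies the missing argument: passing to a.s.\ convergent subsequences, using the strict separation $-f_2<f_1$ to empty the event $\{\tau^*_{1,0}=\tau^*_{2,0}<T\}$, handling the ordered cases by eventual preservation of strict inequalities, and upgrading to convergence of expectations by domination. One caveat: your gap argument at $\{\tau^*_{1,0}=T\}$ quietly requires $Y_T=g(X_T)$ to lie \emph{strictly} inside the barriers, which is stronger than the non-strict inequality $-f_2(T,x)\le g(x)\le f_1(T,x)$ of Assumption~\ref{ass 2} (though it does hold in the CfD application, where $g\equiv 0$ and the terminal penalties are positive); in the boundary case $g(X_T)=f_1(T,X_T)$ the indicator $\mathbbm{1}_{\{\widehat{\tau}^N_1<T\}}$ need not converge, and one would instead argue that the resulting payoff discrepancy $f_1(\widehat{\tau}^N_1,\cdot)-g(X_T)$ vanishes by continuity. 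With that point flagged, your proof is a correct and substantially more complete version of the paper's argument.
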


\begin{proof}
    By Theorem \ref{thm:time-conv}, the estimated exit times $(\widehat{\tau}^{ N}_1, \widehat{\tau}^{ N}_2)$ converge to the optimal stopping times $(\tau_{1,0}^*, \tau_{2,0}^*)$ in $\mathbb{L}^2$, which implies weak convergence. Since the functional $J_{0,x}(\tau_1,\tau_2)$, defined in \eqref{dynkin_game}, is a continuous map of exit times, the result follows immediately. 
\end{proof}

\section{Numerical Implementation}
\label{sec:num experiments}
To solve the DRBSDE \eqref{DRBSDEdecay}, we implemented a feedforward neural network using PyTorch. The network follows the backward-in-time algorithm presented in Algorithm \ref{algo:DRBSDE}, designed to approximate the solution pair $(Y,Z)$. It consists of $L=3$ hidden layers, each with $N_\ell = 50$ neurons for $\ell = 1, 2, 3$. The input layer has $N_0 = d + 1$ neurons representing time and the forward process, while the output layer has $N_4=  d + 1$ neurons corresponding to the estimated values of $Y$ and $Z$. We use \emph{tanh} as activation function in all hidden layers and perform the optimization using Adam algorithm, with a learning rate $lr = 0.001$.  \\

We train the network for a total of $N=50$ time steps, we use $500$ training epochs for the first two optimization steps (corresponding to the last two time steps in the backward-in-time algorithm) and $100$ epochs for the remaining steps. The batch size is set to $M= 2^{10}$, and the training set is generated as described in Subsection \ref{discretization}. The input data is standardized before being passed to the neural network.  \\

In the following, we report the implementation of two different problems. The first one is designed to be a fair game, constructed symmetrically to serve as a benchmark solution. This allows us to evaluate the performance of the algorithm in a controlled setting. The second is designed to have a greater economic interest by illustrating how the CfDs introduced in Section \eqref{sec:application} actually works in energy markets.  

\subsection{Benchmark Problem: Symmetric Fair Game}
We first validate our numerical solver using a multi-dimensional symmetric Dynkin game with constant barriers. This serves as a benchmark for testing the DRBSDE architecture and training algorithm.\\

The forward process governing the state dynamics is modeled as a $d-$dimensional OU process: \begin{equation*}
    dX_s = - \kappa  X_s ds + \sigma dB_s,
\end{equation*}
where $B_s$ is a $d$-dimensional Brownian Motion and $\kappa, \sigma \in \bR^{d\times d}$.  The process starts at $X_0 = 0$.\\
We now define the reward structure of the Dynkin game introduced in \eqref{dynkin_game}. The payoff consists of a running cost/reward accumulated up to the first stopping time, along with an exit value  that depends on which player stops the game: 
\begin{align*} \label{benchmark}
J_{0,0}(\tau_1, \tau_2) = \mathbb{E}\bigg[
-\alpha  \int_0^{\tau_1 \wedge \tau_2}  \frac{1}{d}\mathbf{1}^\top X_s  ds
&+ \gamma \mathbbm{1}_{\{\tau_1 \leq \tau_2, \tau_1 < T\}}
- \gamma \mathbbm{1}_{\{\tau_2 < \tau_1\}} 
\bigg], \quad \tau_1, \tau_2 \in \mathcal{T}_{0,T}
\end{align*}
with $\alpha > 0 $ scaling parameter. The exit payments are taken to be constants and symmetric with $\gamma > 0$. The integrand function denotes the average of the components of \(X_s \) and $\mathbf{1} \in \bR^d$ is the vector of ones. \\
The value process \( Y  \) of the Dynkin game is characterized as the unique solution to the following DRBSDE, consistent with the general formulation introduced in \eqref{eq:drbsde_intro} and its accompanying barrier and minimality conditions:
\begin{equation*}
    \begin{cases}
        Y_s  = \displaystyle - \frac{\alpha}{d} \int_s^T  \mathbf{1}^\top X_r \, dr 
        - \int_s^T Z_r \, dB_r 
        + A_T - A_s - (C_T - C_s), \\[10pt]
        -\gamma \leq Y_s \leq \gamma, \quad \forall s \in [0,T], \\[10pt]
        \displaystyle \int_0^T (Y_r + \gamma) \, dA_r = 0, 
        \quad \int_0^T (\gamma - Y_r) \, dC_r = 0.
    \end{cases}
\end{equation*}

Due to the symmetric nature of the problem, both players are in identical strategic positions. This implies that neither player has any structural advantage and therefore the game is fair. The value of the game at time $t=0$ is $Y_0 = 0$. Furthermore, since the game dynamics and exit rules are symmetric, the exit times $\tau_1, \tau_2$ must follow the same probability distribution.  \\
\begin{table}[h] 
\centering
\begin{tabular}{|l|c|}
\hline
\textbf{Simulation parameters} & \textbf{Values} \\
\hline
\( \text{Dimension } d \)                     & 20 \\
\( \text{Time horizon } T \)                  & 1.0 \\
\( \text{Mean reversion matrix } \kappa \)    & diag\((\kappa_1, \ldots, \kappa_{d}),\ \kappa_i \sim \text{Uniform}[1.5, 2.5]\) \\
\( \text{Volatility matrix } \sigma \)        & diag\((1.0, \ldots, 1.0)\) \\
\( \text{Initial value } x \)                 & 0.0 \\
\( \text{Scaling factor } \alpha \)           & 10.0 \\
\( \text{Barrier coefficient } \gamma \)      & 0.5 \\
\hline
\end{tabular}
\caption{Parameter values for the high-dimensional benchmark problem}
\label{table:model_parameters_1}
\end{table}

Since the algorithm is local, we have separate loss functions for all the time steps. In Figure \ref{fig:loss}, we can see how the loss functions for $n \ge N-3$ decrease towards zero. The losses for $n < N-3$ behave in a similar way, so we omit their presentation. \\In Figure \ref{fig:Yt}, we see three realizations of the $Y_t$ dynamics, while in Figure \ref{fig:Y0} the distribution of the estimated value $Y_0$ is presented. As we can see, the algorithm is able to capture the fair nature of the game.

\begin{figure}[H]
    \centering
    \begin{subfigure}{0.32\textwidth}
        \includegraphics[width=\linewidth]{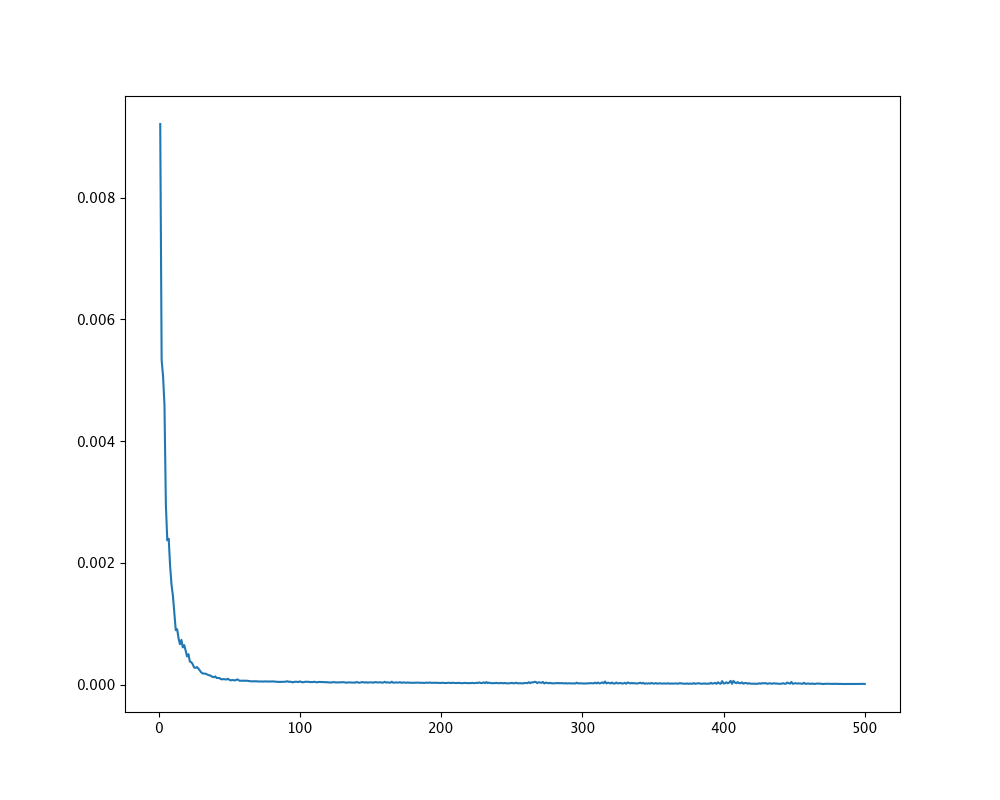}
        \caption{$n =  N-1$}
    \end{subfigure}
    \hfill
    \begin{subfigure}{0.32\textwidth}
        \includegraphics[width=\linewidth]{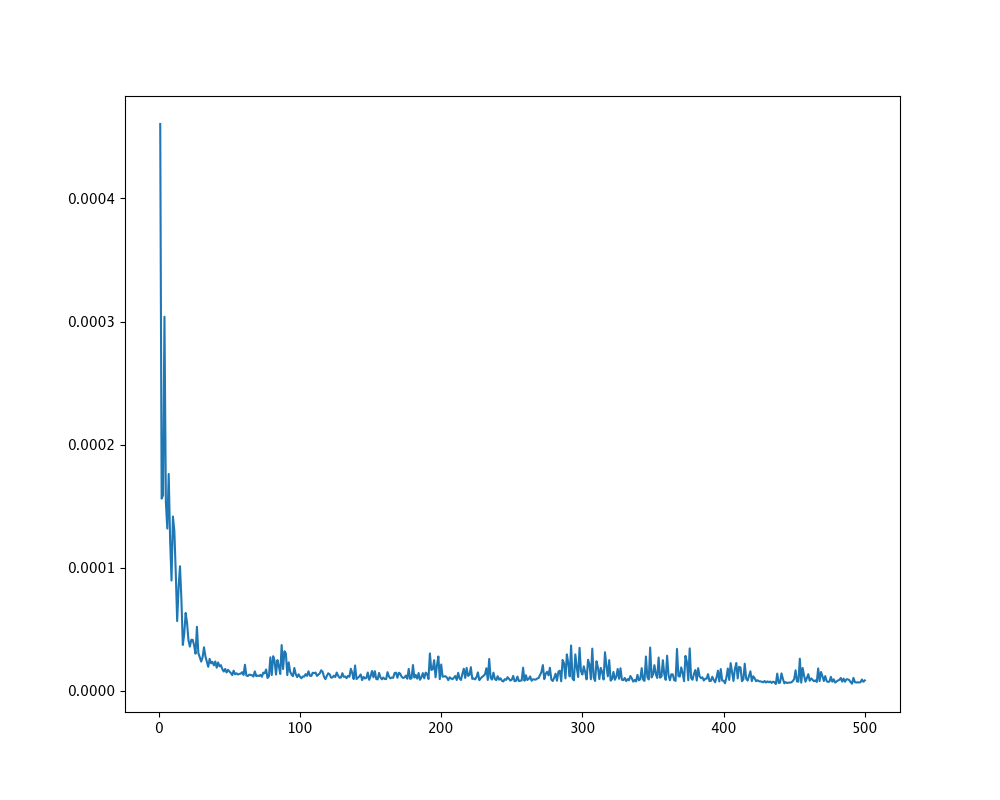}
        \caption{$n =  N-2$}
    \end{subfigure}
    \hfill
    \begin{subfigure}{0.32\textwidth}
        \includegraphics[width=\linewidth]{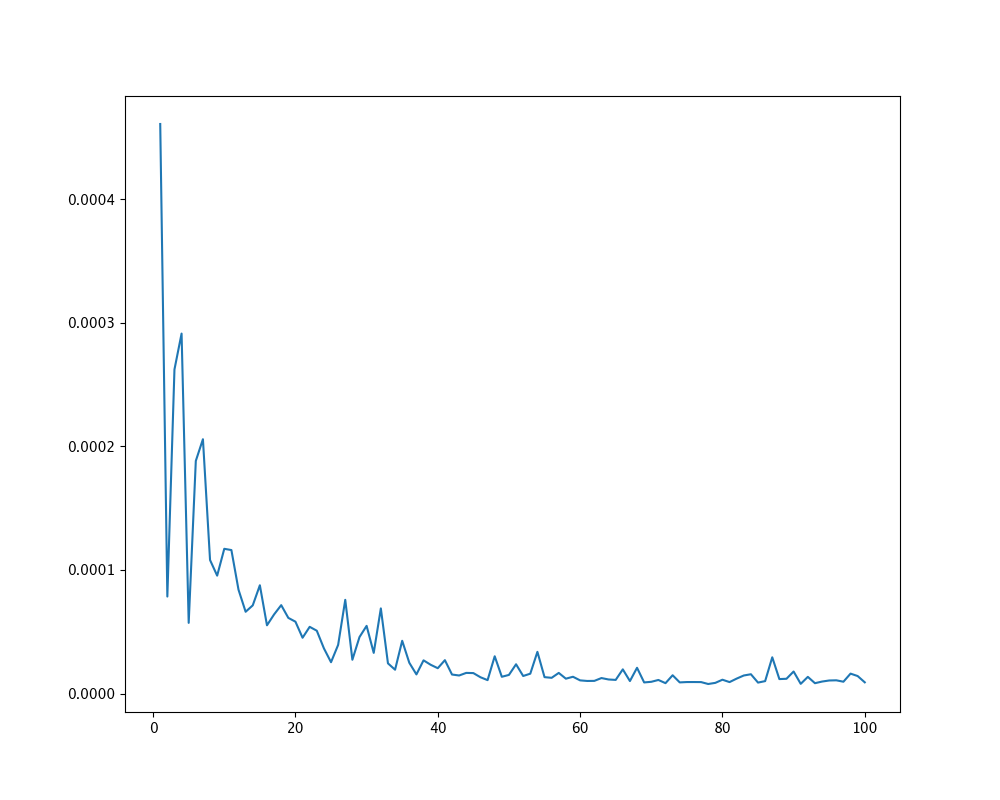}
        \caption{$n =  N-3$}
    \end{subfigure}
    \caption{Convergence of loss function at different timesteps $n$ in the benchmark problem}
    \label{fig:loss}
\end{figure}

\begin{figure}[H]
    \centering
    \begin{subfigure}[t]{0.49\textwidth}
        \includegraphics[width=\linewidth]{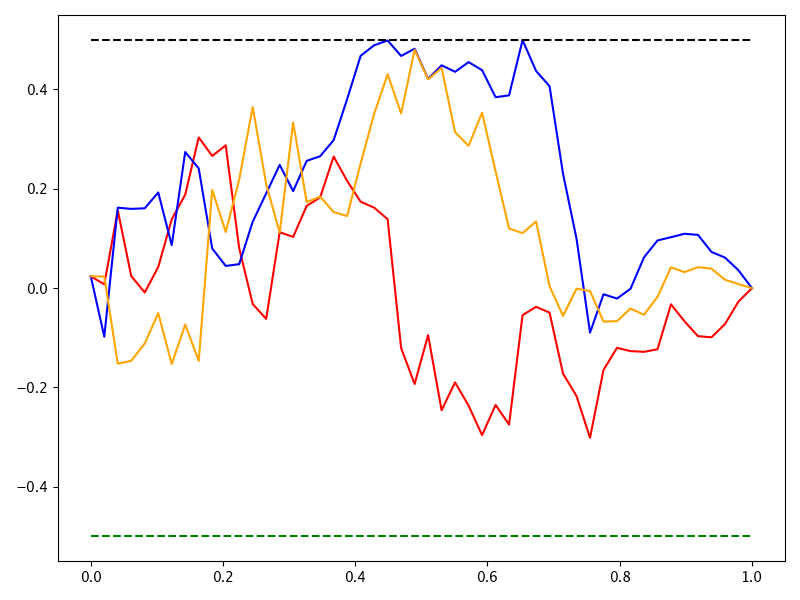}
        \caption{Realizations of $Y_t$}
        \label{fig:Yt}
    \end{subfigure}
    \hfill
    \begin{subfigure}[t]{0.49\textwidth}
        \includegraphics[width=\linewidth]{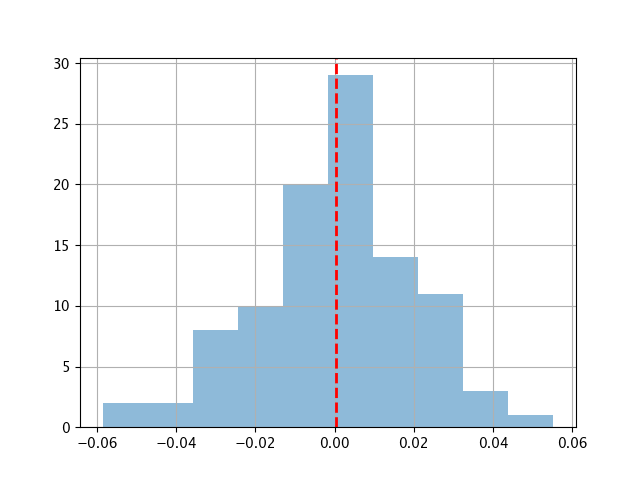}
        \caption{Distribution of estimated $Y_0$ compared to true value $Y_0 = 0$}
        \label{fig:Y0}
    \end{subfigure}
    \caption{Three realisations of the $Y_t$ dynamics and the distribution of estimated $Y_0$ over 100 independent training processes in the benchmark problem}
    \label{fig:Y}
\end{figure}

Another topic of interest is the distribution of the first exit times $\tau_1$ and $\tau_2$. As shown in Figure~\ref{fig:times}, these appear to follow the same distribution, as expected from the symmetric nature of the benchmark game. Furthermore, we can observe that over time the probability of exiting the game decreases due to the dynamics being drawn to the terminal value $Y_T = 0.$ In approximately 85 \% of realisations the barriers are not reached. Conditionally on the event that the player exits the game, the expected exit time is approximately 0.31. 

\begin{figure}[H]
    \centering
    \includegraphics[width=0.5\linewidth]{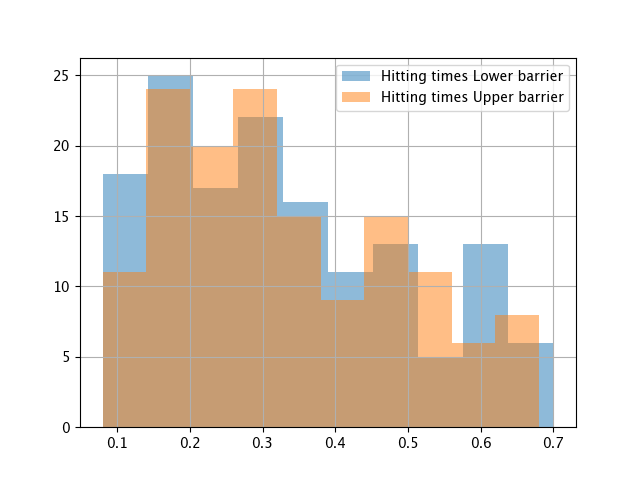}
    \caption{Estimated distribution of the first exit times in the benchmark problem}
    \label{fig:times}
\end{figure}

\subsection{Parameter Calibration for the Underlying Price Process} \label{calibration} 
In this section, we present the calibration of the parameters $\mu, \sigma$ and $\kappa$ governing the OU process \eqref{eq:OU} 
for dimension $d=24$.
The calibration is based on historical data of the weekly spot power prices for the countries of continental Europe\footnote{Historical data on European spot electricity prices is available at \url{https://ember-energy.org/data/european-wholesale-electricity-price-data/}}, the list of which is available in Table \ref{tab:eu-params}. We use the data over a two-year interval from July 2023 to July 2025. The reason for choosing weakly prices for the calibration is that they follow a more regular pattern, which makes them more suitable when working with models without jumps. \\

To estimate the parameters, we perform maximum likelihood estimation (MLE) separately for each country. Given the discrete observations of the $j$-th price process \( X^j_t \) at times \( t_0, t_1, \dots, t_N \)  for $j = 1, \ldots, d$, this method exploits the fact that the conditional distribution of \( X^j_{t_{i+1}} \) given \( X^j_{t_i} \) for an OU process is  

\begin{equation*}
X^j_{t_{i+1}} \mid X^j_{t_i} \sim \mathcal{N} \left( X^j_{t_i} + \kappa^j(\mu^j - X^j_{t_i})\Delta t, \,(\sigma^j)^2\Delta t \right),
\end{equation*}
where $\Delta t$ is the time step. Note that using the notation of Section \ref{sec:market-price}, the $\mu^j$ represent the components of vector $\mu \in \mathbb{R}^d$, while $\kappa^j$ and $\sigma^j$ are the diagonal elements of matrices $\kappa, \sigma \in \mathbb{R}^{d \times d}$, respectively.

To estimate \( \mu^j, \sigma^j, \) and \( \kappa^j \), we maximize the log-likelihood function  

\begin{equation}
\log \mathcal{L}(\mu^j, \sigma^j, \kappa^j) = \sum_{i=0}^{N-1}- \frac{1}{2} \left[  \log (2 \pi (\sigma^j)^2\Delta t) + \frac{(X^j_{t_{i+1}} - (X^j_{t_i} + \kappa^j(\mu^j - X^j_{t_i})\Delta t) )^2}{2(\sigma^j)^2\Delta t} \right].
\end{equation}

To perform the optimization we used the L-BFGS-B algorithm, a quasi-Newton optimization method which approximates the inverse Hessian matrix without storing it entirely and thus significantly reducing memory requirements.\\

\begin{table}[htbp]
\centering
\begin{tabular}{|l||r|r|r|r|}
\hline
        \textbf{Country} &  \textbf{diag}$(\bm{\kappa})$ &  $\bm{\mu}$ &  \textbf{diag}(\bm{$\sigma$})&  \textbf{p-value} \\
\hline
\hline
        Austria &     27.43 &  90.69 &    187.93 &     0.44 \\
\hline
        Belgium &     41.72 &  80.56 &    210.00 &     0.56 \\
\hline
       Bulgaria &     27.25 & 104.95 &    222.49 &     0.62 \\
\hline
        Croatia &     31.81 & 100.26 &    209.58 &     0.93 \\
\hline
        Czechia &     30.60 &  91.69 &    188.54 &     0.75 \\
\hline
        Denmark &     73.96 &  75.68 &    240.96 &     0.88 \\
\hline
        Estonia &     75.98 &  88.16 &    325.86 &     0.17 \\
\hline
         France &     30.44 &  69.10 &    225.84 &     0.82 \\
\hline
        Germany &     53.40 &  84.53 &    220.38 &     0.29 \\
\hline
         Greece &     24.90 & 104.93 &    177.39 &     0.55 \\
\hline
        Hungary &     30.72 & 104.50 &    243.72 &     0.43 \\
\hline
          Italy &     18.47 & 114.84 &    110.14 &     0.82 \\
\hline
         Latvia &     71.99 &  89.50 &    302.16 &     0.32 \\
\hline
      Lithuania &     72.55 &  89.27 &    306.12 &     0.33 \\
\hline
     Luxembourg &     53.40 &  84.53 &    220.38 &     0.29 \\
\hline
     Montenegro &     22.40 & 105.45 &    157.00 &     0.83 \\
\hline
    Netherlands &     51.74 &  84.75 &    204.55 &     0.19 \\
\hline
North Macedonia &     24.95 & 106.78 &    192.44 &     0.26 \\
\hline
         Poland &     52.28 &  99.17 &    181.74 &     0.98 \\
\hline
       Portugal &     20.46 &  70.49 &    204.60 &     0.87 \\
\hline
        Romania &     27.38 & 105.87 &    229.18 &     0.76 \\
\hline
         Serbia &     24.56 & 104.25 &    198.08 &     0.63 \\
\hline
       Slovakia &     27.29 &  98.80 &    204.53 &     0.78 \\
\hline
       Slovenia &     32.92 &  98.28 &    205.76 &     0.72 \\
\hline
          Spain &     20.95 &  71.03 &    209.19 &     0.95 \\
\hline
    Switzerland &     17.22 &  91.26 &    164.64 &     0.31 \\
\hline
\end{tabular}

\caption{Calibrated OU parameters for continental European electricity markets}
\label{tab:eu-params}
\end{table}

To evaluate the model fit, we analyse the residuals, that is, the deviations between the observed electricity prices and the drift component estimated from the calibrated OU process. A Kolmogorov Smirnov (K-S) goodness of fit test is performed for each country, with resulting $p$-values reported in Table~\ref{tab:eu-params}. All values are well above the standard $0.05$ significance level, indicating no statistical evidence against the chosen model specification. Estonia yields the lowest $p$-value at $0.17$, while Poland displays the strongest statistical alignment.\\
To complement these tests, Figure~\ref{fig:main} provides diagnostic plots for the two representative cases Poland and Estonia. Figures \ref{fig:comparison_prices_Poland} and \ref{fig:comparison_prices_Estonia} display the observed electricity price series for Poland and Estonia, alongside simulated price paths generated from the calibrated OU process. 
The estimated price path shown in the figure is a single realisation of the calibrated OU process, and as such, it is inherently random. It is not meant to align pointwise with the historical price trajectory, but rather to reflect the statistical behavior implied by the calibrated dynamics, such as mean reversion and volatility.
Additionally, Subplots~\ref{fig:sub1} and~\ref{fig:sub4} show that the empirical and simulated price distributions align closely. Subplots~\ref{fig:sub2} and~\ref{fig:sub5} depict the autocorrelation function (ACF) of the residuals, while the $Q-Q$ plots in Subplots~\ref{fig:sub3} and~\ref{fig:sub6} further confirm a good distributional fit.\\

\begin{figure}[htbp]
    \centering

    \begin{subfigure}[t]{0.4\textwidth}
        \includegraphics[width=\textwidth]{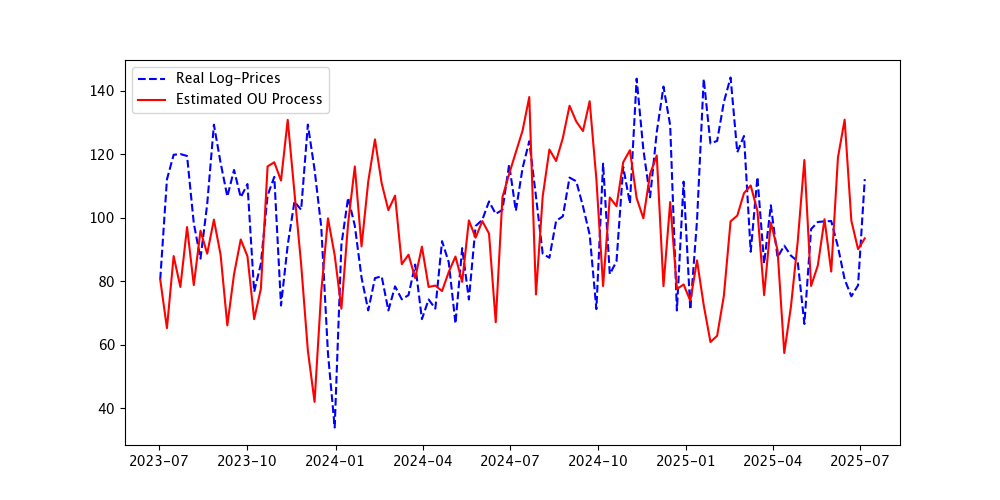}
        \caption{Poland - observed (blue dashed line) and simulated prices (red solid line).}
        \label{fig:comparison_prices_Poland}
    \end{subfigure}
    \hspace{0.05\textwidth}
    \begin{subfigure}[t]{0.4\textwidth}
        \includegraphics[width=\textwidth]{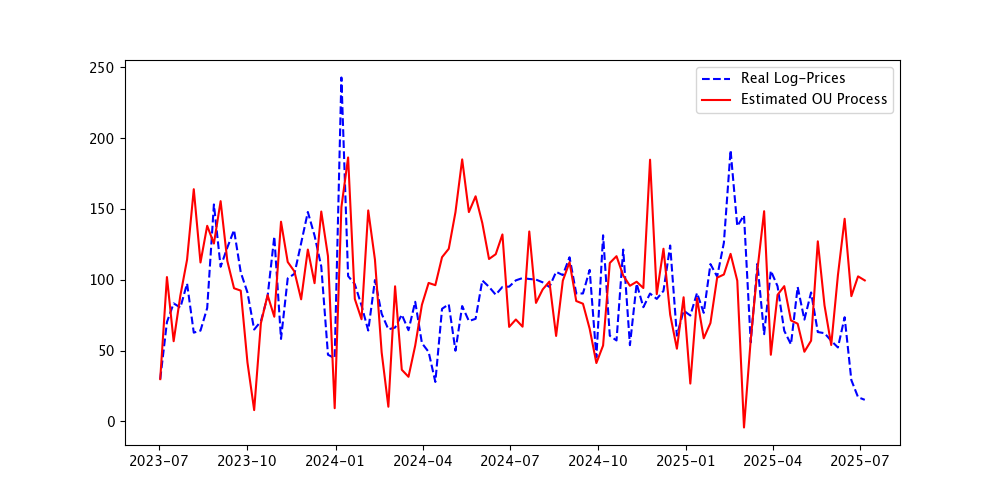}
        \caption{Estonia - observed (blue dashed line) and simulated prices (red solid line)}
        \label{fig:comparison_prices_Estonia}
    \end{subfigure}

    \begin{subfigure}[t]{0.3\textwidth}
        \includegraphics[width=\textwidth]{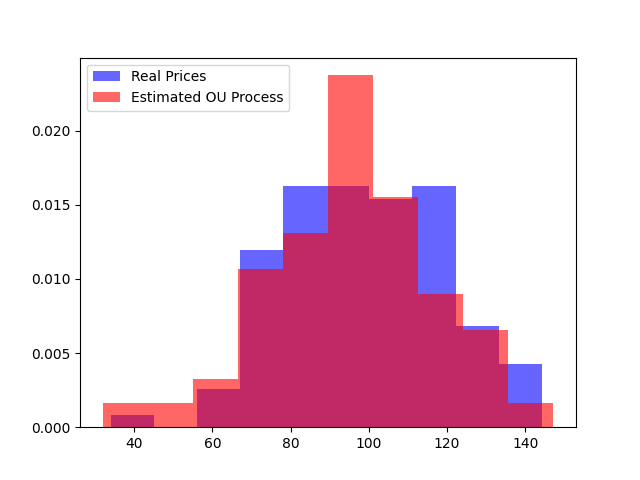}
        \caption{Poland - distribution comparison}
        \label{fig:sub4}
    \end{subfigure}
    \hspace{0.05\textwidth}
        \begin{subfigure}[t]{0.3\textwidth}
        \includegraphics[width=\textwidth]{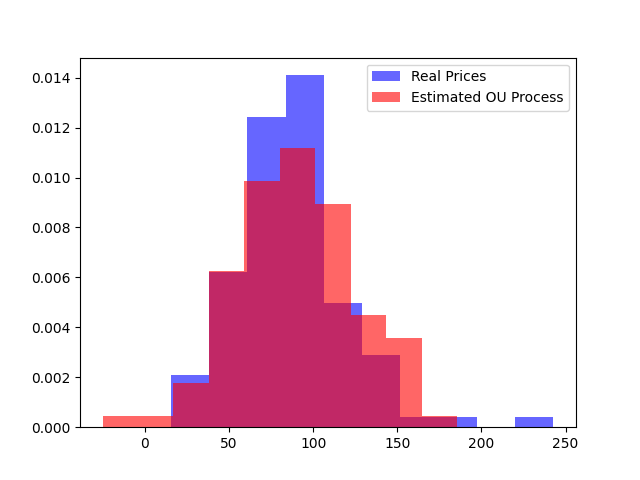}
        \caption{Estonia - distribution comparison}
        \label{fig:sub1}
    \end{subfigure}


    \begin{subfigure}[t]{0.3\textwidth}
        \includegraphics[width=\textwidth]{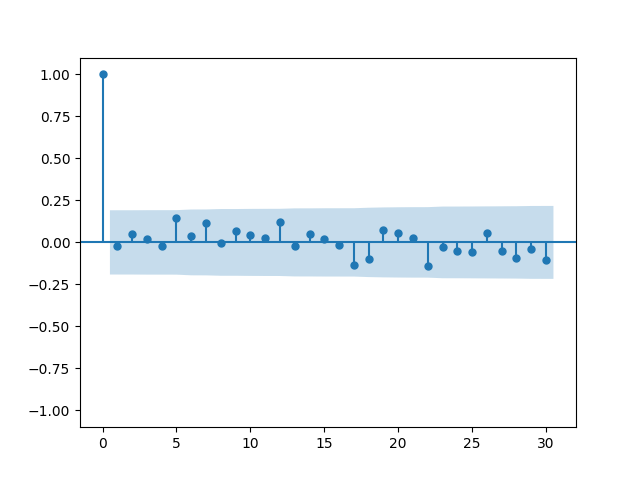}
        \caption{Poland - residuals ACF}
        \label{fig:sub5}
    \end{subfigure}
\hspace{0.05\textwidth}
       \begin{subfigure}[t]{0.3\textwidth}
        \includegraphics[width=\textwidth]{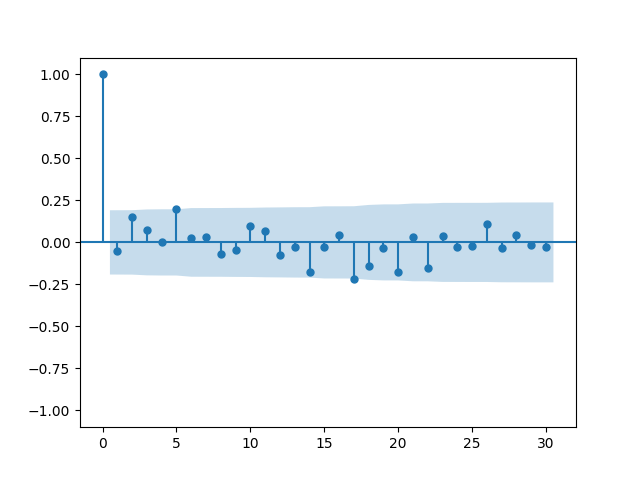}
        \caption{Estonia - residuals ACF}
        \label{fig:sub2}
    \end{subfigure}


        \begin{subfigure}[t]{0.3\textwidth}
        \includegraphics[width=\textwidth]{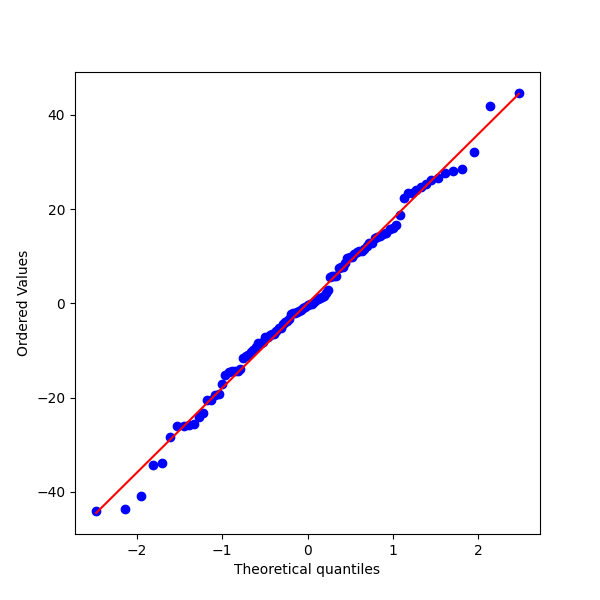}
        \caption{Poland - Q-Q plot of residuals}
        \label{fig:sub6}
    \end{subfigure}
\hspace{0.05\textwidth}
           \begin{subfigure}[t]{0.3\textwidth}
        \includegraphics[width=\textwidth]{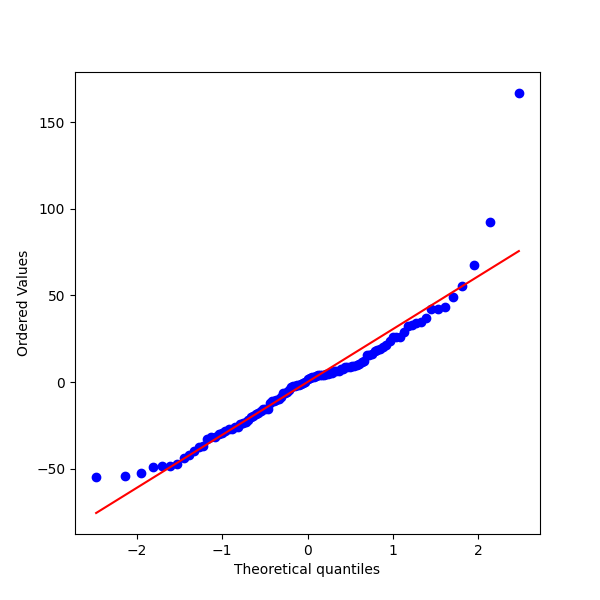}
        \caption{Estonia - Q-Q plot of residuals}
        \label{fig:sub3}
    \end{subfigure}
    \caption{Calibration performance visual tests}
    \label{fig:main}
\end{figure}

\subsection{Implementation of the CfD in Energy Markets}

We now apply the Deep DRBSDE solver to the CfD problem by solving the system \eqref{DRBSDEdecay}, modelling the underlying electricity price with the \( d \)-dimensional OU process whose coefficients are calibrated as detailed in the previous section. This setup enables us to isolate the effects of the penalty structure and barriers within a tractable framework. \\

To model \( X_t \) as in \eqref{eq:OU}, we use the parameters \(\mu\), \(\kappa\), and \(\sigma\) estimated in the calibration section \ref{calibration}, with the values for \(\kappa\) and \(\sigma\) corresponding to the diagonal entries of their respective diagonal matrices, as detailed in Table  \ref{tab:eu-params}. We set the initial price $x$ to a vector of the average historical prices observed for each country within a period used in calibration.

We then address the selection of parameters related to the payoff function \eqref{cfd payoff-explicit} in which we set uniform weights $w_i = \frac{1}{d}, \, i = 1,\ldots,d$. The vector of strike prices $K \in \bR^d$ specifies the agreed-upon exchange prices for electricity, with each component corresponding to a distinct dimension of the forward process. In real world markets, the strike price of a CfD is typically determined through an auction process, reflecting market conditions and competition at the time. However, for the purposes of modelling and to ensure simplicity of presentation, we set the strike price $K$ equal to $\mu + U$, for a $d$-dimensional uniformly distributed random variable $U \sim \text{U}(0.9,1.1)$. Finally, it remains to set the coefficients $\gamma_1$ and $\gamma_2$ in \eqref{barriers}, modelling the penalties for early exits. In the context of hedging risks associated with renewable energy investments,  we assign a higher penalty to Player 1, reflecting the realistic assumption that the regulator (Player 1) is less likely to exit early from a contract within a typically small-sized energy producer (Player 2). We thus set $\gamma_1=1.34$ and $\gamma_2 =0.29$. \\ 

As shown in Figure \ref{fig:loss-cfd}, the loss function for the time steps $n \ge N-3$ decreases across iterations, approaching zero. The losses for $n < N-3$, which are omitted,  exhibit a similar trend. In Figure \ref{fig:Yt-cfd}, we report three realisations for $Y_t$. The estimated distribution of $Y_0$ across $100$ independent simulations is illustrated in Figure \ref{fig:Y0-cfd}. The distribution has mean $1.00$ indicating that the game yields an admissible investment for Player 2, ensuring coverage against potential risk. This suggests that the electricity producer can expect to achieve sufficient returns to secure their position. This outcome aligns with expectations, as the strike price of the game is set above the long-term mean of the price process dynamics.  Additionally, we observed that the empirical distributions of exit times for both the upper and lower barriers are consistent across the $100$ reruns, though these results are omitted.
The histogram in Figure \ref{fig:time-hist-cfd} shows the distribution of the optimal exit times for both players. For the chosen strike price and the penalties, we get that Player 1 early exits the game approximately $8\%$ of the time, and Player 2 $16 \%$ of the time. We also observe that in cases where these decide to exit, Player 1 does so only in the first part of the time horizon and Player 2 in the second. Finally, the scatterplot in Figure \ref{fig:time-scatter-cdf} compares the exit times for each player with the electricity average prices at the time of exit, which slightly decrease for both players as time progresses.  
\begin{table}[H] 
\centering
\begin{tabular}{|l|c|}
\hline
\textbf{Simulation parameter} & \textbf{Value} \\
\hline
\( \text{Time horizon } T \)            & $1.0$            \\
\( \text{Discount factor } \rho \)            & 0.04          \\

\( \text{Upper barrier coeff. } \gamma_1 \)         & 1.34            \\
\( \text{Lower barrier coeff. } \gamma_2 \)         & 0.29           \\
\hline
\end{tabular}
\caption{Parameter values for the CfD example}
\label{table:model_parameters_2}
\end{table}
\begin{figure}[H]
    \centering
    \begin{subfigure}{0.32\textwidth}
        \includegraphics[width=\linewidth]{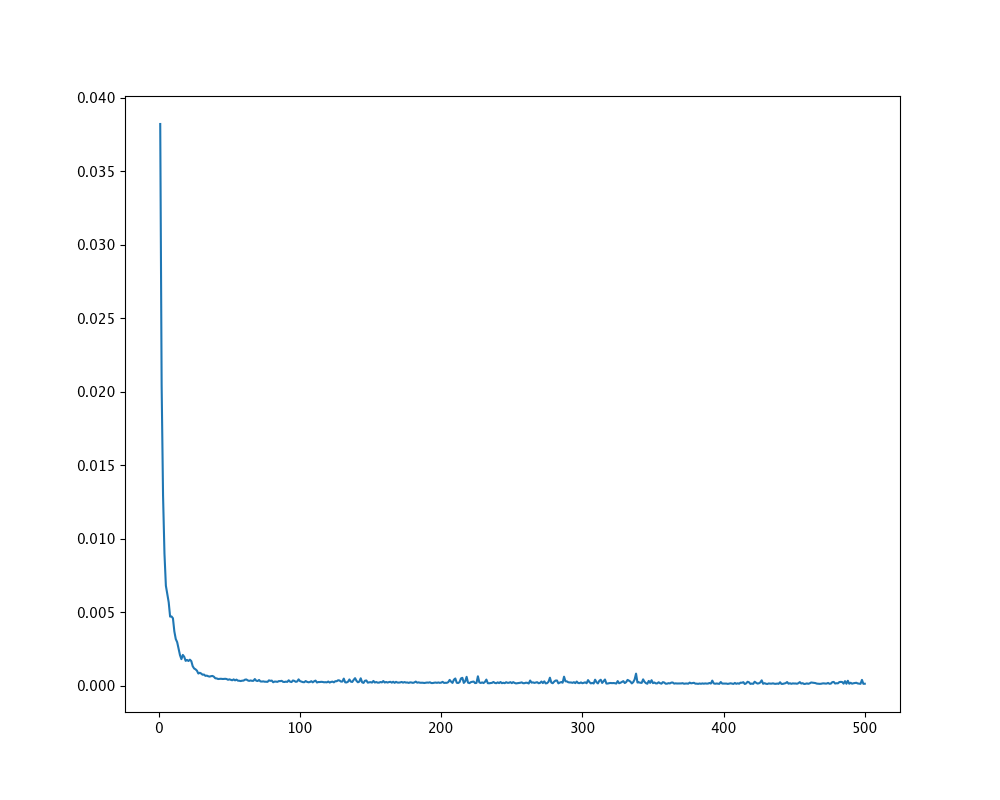}
        \caption{$n =  N-1$}
    \end{subfigure}
    \hfill
    \begin{subfigure}{0.32\textwidth}
        \includegraphics[width=\linewidth]{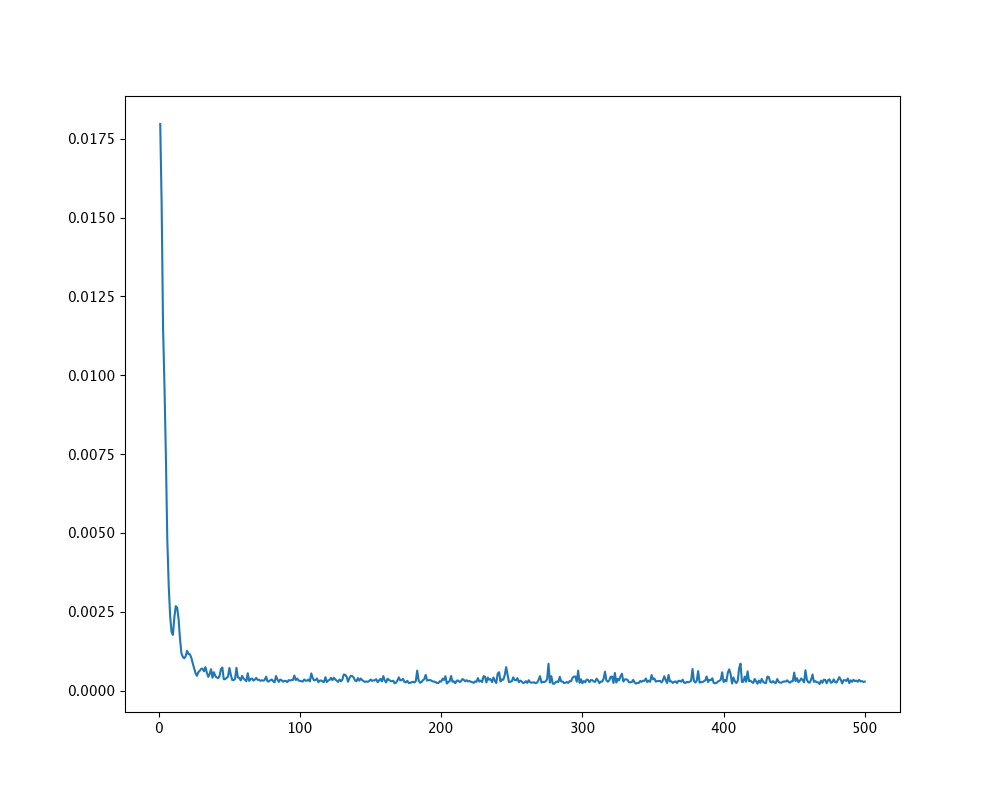}
        \caption{$n =  N-2$}
    \end{subfigure}
    \hfill
    \begin{subfigure}{0.32\textwidth}
        \includegraphics[width=\linewidth]{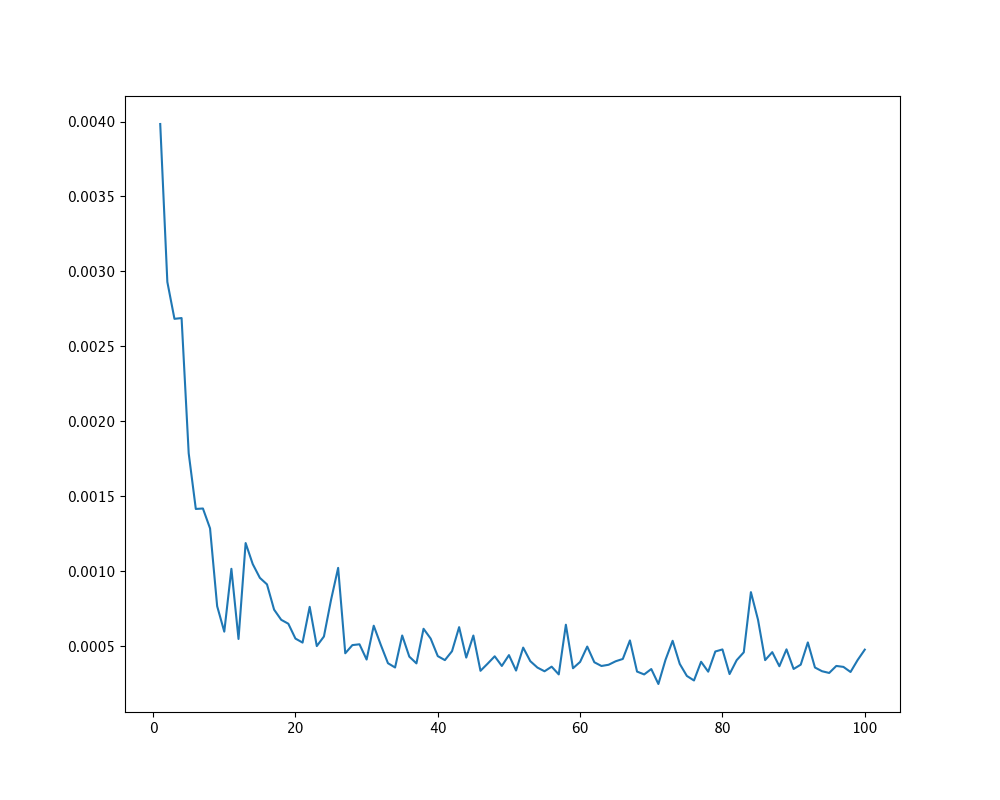}
        \caption{$n =  N-3$}
    \end{subfigure}
    \caption{Convergence of loss function at different timesteps $n$ in the CfD example}
    \label{fig:loss-cfd}
\end{figure}
\begin{figure}[H]
    \centering
    \begin{subfigure}[t]{0.49\textwidth}
        \includegraphics[width=\linewidth]{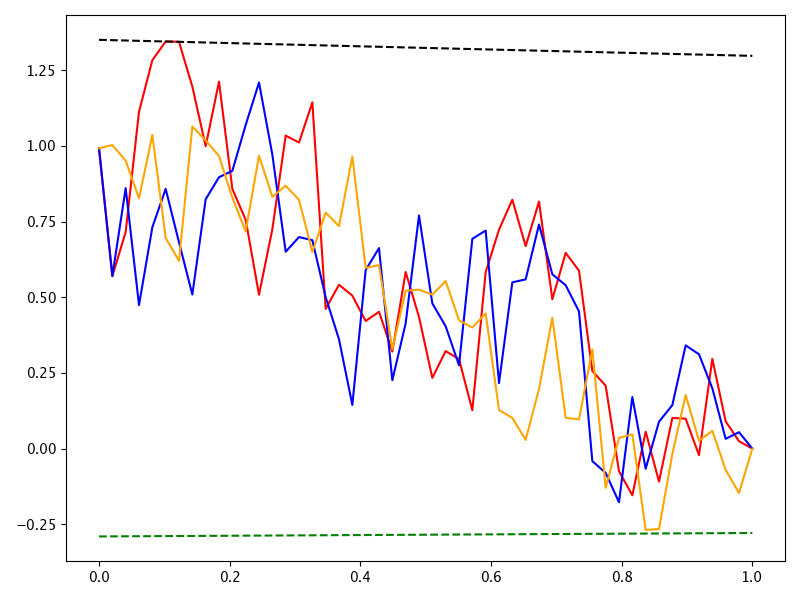}
        \caption{Realizations of $Y_t$}
        \label{fig:Yt-cfd}
    \end{subfigure}
    \hfill
    \begin{subfigure}[t]{0.49\textwidth}
        \includegraphics[width=\linewidth]{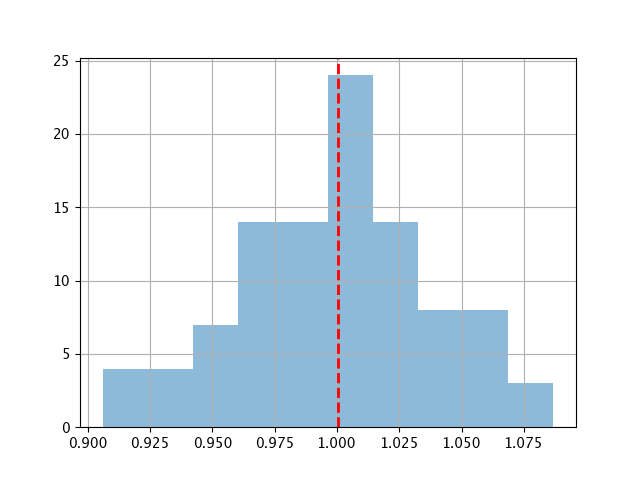}
        \caption{Distribution of estimated $Y_0$}
        \label{fig:Y0-cfd}
    \end{subfigure}
    \caption{Three realisations of the $Y_t$ dynamics and the distribution of estimated $Y_0$ over 100 independent training processes in the CfD example}
    \label{fig:Y-cfd}
\end{figure}
\begin{figure}[H]
    \centering
    \begin{subfigure}[t]{0.49\textwidth}
        \includegraphics[width=\linewidth]{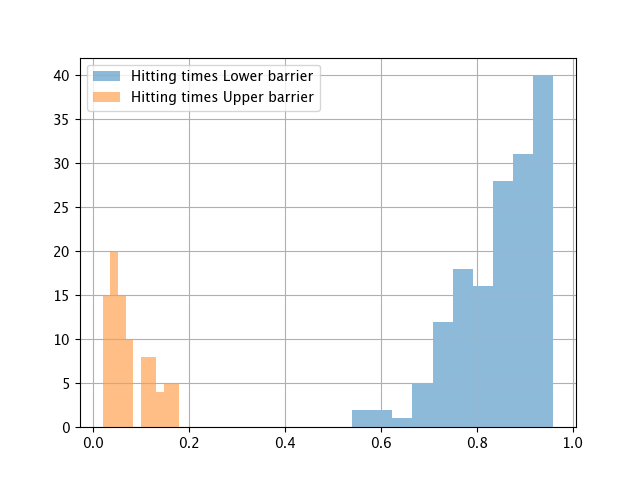}
        \caption{Estimated distribution of the first exit times}
        \label{fig:time-hist-cfd}
    \end{subfigure}
    \hfill
    \begin{subfigure}[t]{0.49\textwidth}
        \includegraphics[width=\linewidth]{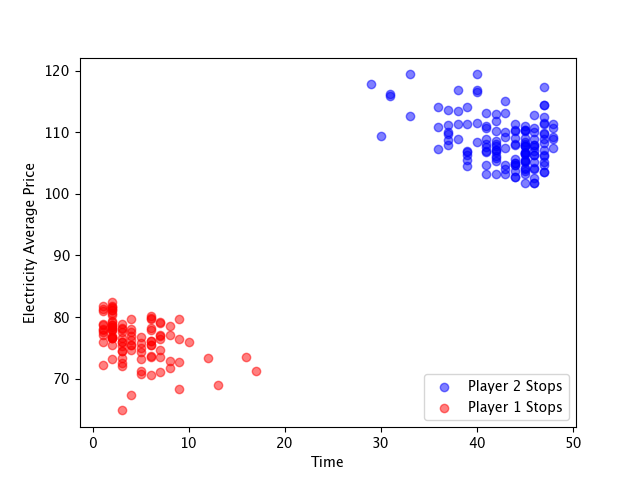}
        \caption{Relationship between first exit times and price dynamics}
        \label{fig:time-scatter-cdf}
    \end{subfigure}
    \caption{First exit time analysis in the CfD example}
    \label{fig:time-cfd}
\end{figure}

\newpage
\section{Conclusion}

This paper introduces a deep learning-based framework for pricing CfDs with early exit clauses, modeled as a two-player zero-sum Dynkin game. The interaction between an electricity producer and a regulator is captured through a stochastic stopping game, where each player can exit early at a cost. We model electricity prices using a mean-reverting OU process and characterize the fair value of the CfD, as well as the optimal exit strategies, through a DRBSDE.

We build on the theoretical correspondence between the value function of the Dynkin game and the solution to a DRBSDE, where the first component represents the contract value and the optimal stopping times correspond to the first hitting times of the barriers. We further introduce a novel representation of the reflection terms as solutions to a Skorokhod problem with time-dependent obstacles, offering structural insight into the dynamics of the cumulative penalties.

To approximate the DRBSDE solution, we design and analyze a deep learning algorithm that approximates both the value function and optimal stopping strategies. We establish convergence results for the method, showing that the approximations become accurate as the network capacity and time discretization improve. The algorithm is validated through numerical experiments, including a symmetric 20-dimensional benchmark game and a realistic CfD model based on 24-dimensional weekly spot electricity price data for continental Europe. These experiments demonstrate the scalability and robustness of the proposed solver in high-dimensional settings, where classical PDE based approaches become infeasible.

The results provide actionable insights for energy market participants and regulators. In particular, we highlight the importance of penalty design and market volatility in shaping early exit behavior and fair contract valuation. The proposed methodology also supports the development of data driven contract design tools that can accommodate complex market structures and risk sharing arrangements.

Looking ahead, we plan to extend the model to incorporate price dynamics with finite variation jumps, which are common in electricity markets due to external shocks and regulatory changes. This extension would enhance the realism and flexibility of the framework and allow for even more accurate modeling of risk and strategic behavior under uncertainty.
\vspace{0.5cm}

\textbf{Acknowledgment.} The authors (N. Agram and G. Pucci) would like to express their gratitude to Ren\'e A\"id for his insightful discussions and valuable feedback during his visit to KTH.

\appendix

\section{Theoretical Background on DRBSDEs and Dynkin Games} \label{appendix:drbsde}
 This appendix collects the theoretical framework supporting the main results. We begin by recalling standard existence and uniqueness results for doubly reflected BSDEs, and then present the abstract Dynkin game formulation and its connection to DRBSDEs. In addition, we include some original results that are specific to the structure and equilibrium representation of the contract studied in the main text.




\subsection{Preliminaries and DRBSDE Framework} \label{sec:preliminaries}

Let $b : [0, T] \times \mathbb{R}^d \to \mathbb{R}^d$ and $\sigma : [0, T] \times \mathbb{R}^d \to \mathbb{R}^{d \times d}$ be measurable functions, $d \ge 1$.
For $(t, x) \in [0, T] \times \mathbb{R}^d$, let $(X_s^{t, x})_{s \in [0, T]}$ be the unique $\mathbb{R}^d$-valued process solution of the following standard SDE:
\begin{equation}
\begin{cases}\label{sde}
    X_s^{t, x} = x + \displaystyle\int_t^s b(r, X_r^{t, x}) \, dr + \int_t^s \sigma(r, X_r^{t, x}) \, dB_r, & t \leq s \leq T, \\
    X_s^{t, x} = x, & s < t.
\end{cases}
\end{equation}
The process $(X_s^{t, x})_{s \in [0, T]}$ represents the underlying asset, which could be, for example, the price process of a financial asset or a commodity.

\begin{assumption} \label{ass1} We assume that the coefficients $b$ and $\sigma$ satisfy the global Lipschitz and linear growth conditions:
\begin{align*}
    |b(t, x) - b(t, y)| + |\sigma(t, x) - \sigma(t, y)| &\leq C_L |x - y|,  \\
    |b(t, x)|^2 + |\sigma(t, x)|^2 &\leq C_L^2(1 + |x|^2),
\end{align*}
for every $0 \leq t \leq T$, $x \in \mathbb{R}^d$, $y \in \mathbb{R}^d$, where $C_L$ is a positive constant. 
\end{assumption}
 It is clear that, under Assumptions \ref{ass1}, the SDE \eqref{sde} has a unique solution (cf. \cite[Theorem 2.9, p. 289]{karatzas}). Moreover, for every $p \geq 2$, there exists $C_p>0$, such that for all $t\in  [0,T]$, 
\begin{equation*}
\mathbb{E}\left[
    \sup_{s \in [t, T]} \left| X^{t, x} _{s} \right|^p \, \middle| \, \mathcal{F}_t
\right] \leq C_p(1 + |x|^p).
\end{equation*}
The constant $C_p$ depends only on the Lipschitz and the linear growth constants of $b$ and $\sigma$.\\

We also introduce the following notations, which will be used throughout the appendix:
\begin{itemize}
    \item $D[0, \infty)$: the space of real-valued RCLL functions on $[0, \infty)$.
    \item $D^{-}[0, \infty)$ (resp. $D^{+}[0, \infty)$): the space of functions in $D[0, \infty)$ taking values in $\mathbb{R} \cup \{-\infty\}$ (resp. $\mathbb{R} \cup \{\infty\}$).
    \item $BV[0, \infty)$: the set of functions in $D[0, \infty)$ of bounded variation on every finite interval.
    \item $I[0, \infty)$: the set of nondecreasing functions in $D[0, \infty)$.
\end{itemize}

We now state the standard formulation of the DRBSDE and its well-posedness under classical assumptions. To define the equation, we consider the following four objects:

\begin{itemize}
    \item[$(i)$] Let $\xi$ be a given random variable in $\mathbb{L}^2$.
    \item[$(ii)$] $ f: [0, T] \times \Omega \times \mathbb{R} \times \mathbb{R}^d \to \mathbb{R}$ be a given $\mathcal{P} \otimes \mathcal{B}(\mathbb{R}) \otimes \mathcal{B}(\mathbb{R}^d)$-measurable function that satisfies
\begin{equation} \label{lipschi}
    \bE \int_0^T f^2(t, \omega, 0, 0) dt < \infty.
\end{equation}
\begin{equation} 
    |f(t, \omega, y, z) - f(t, \omega, y', z')| \leq C (|y - y'| + |z - z'|),
\end{equation}
\[
\forall (t, \omega) \in [0, T] \times \Omega; \quad y, y' \in \mathbb{R}, \quad z, z' \in \mathbb{R}^d
\]
for some $0 < C < \infty$. 
\item[$(iii)$] Consider also two continuous processes $L, U$ in $\mathcal{S}^2$ that are completely separated, i.e., 
\begin{equation} \label{separated}
    L_t <  U_t, \quad \forall 0 \leq t \leq T, \quad \text{and} \quad L_T \leq \xi \leq U_T \quad \text{a.s.}
\end{equation}
\end{itemize}

\begin{definition}
    A solution for the DRBSDE associated with $(f, \xi, L, U)$ is a quadruple of $\mathcal{P}$-measurable processes $\left(Y_t, Z_t, A_t, C_t\right)_{t\in[0,T]}$ from $ S^2 \times \mathcal{H}^{2,d} \times S_{ci} \times S_{ci} $ such that $\mathbb{P}$-a.s.:
\begin{enumerate}
    \item[(i)] For each $t \in [0, T],$
    \begin{equation} \label{doubly ref def}
    Y_t = \xi +  \displaystyle\int_t^T f(r, Y_s,Z_s) ds
   - \int_t^T Z_s dB_s + (A_T - A_t) - (C_T - C_t).
    \end{equation}
    \item[(ii)] $ L_t \leq Y_t \leq U_t$,  $\forall t \leq T$. 
    \item[(iii)] $
    \displaystyle\int_0^T (Y_s - L_s) dA_s = 
    \int_0^T (U_s - Y_s) dC_s = 0$.  
\end{enumerate}
\end{definition}

We have the following existence result (see \cite[Theorem 3.7]{hamadene2005}). 

\begin{theorem} \label{existence of solution DRBSDE}
    Under Assumptions \eqref{lipschi}-\eqref{separated}, there exists a  unique $\mathcal{P}$-measurable process $\left(Y_t, Z_t, A_t, C_t\right)_{t\in[0,T]}$ solution of the DRBSDE \eqref{doubly ref def}.  
\end{theorem}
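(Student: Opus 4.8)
The statement is a classical well-posedness result, so the plan is to organize the proof around the two standard pillars: uniqueness by an energy estimate, and existence by reducing to a driver frozen in $(y,z)$ and solving that base case through the associated Dynkin game. Throughout, the strict separation $L_t < U_t$ in \eqref{separated} is the structural hypothesis that keeps every step tractable. For uniqueness I would take two solutions $(Y,Z,A,C)$ and $(Y',Z',A',C')$ and apply It\^o's formula to $e^{\beta t}|Y_t - Y_t'|^2$. The stochastic integral vanishes in expectation, the driver increment is absorbed by the Lipschitz bound together with Young's inequality, and the decisive point is that the finite-variation contribution
\begin{equation*}
\int_t^T (Y_s - Y_s')\, d(A_s - A_s') - \int_t^T (Y_s - Y_s')\, d(C_s - C_s')
\end{equation*}
has the favourable sign: the flat-off conditions (iii) force $Y_s = L_s \le Y_s'$ on the support of $dA$ and $Y_s = U_s \ge Y_s'$ on the support of $dC$ (and symmetrically for the primed processes). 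Choosing $\beta$ large and invoking Gronwall yields $Y = Y'$; the It\^o isometry then gives $Z = Z'$, whence $A - C = A' - C'$. Since $A$ (resp. $C$) can increase only on $\{Y = L\}$ (resp. $\{Y = U\}$) and these sets are disjoint by $L < U$, the decomposition of $A-C$ into its monotone parts is unique, so $A = A'$ and $C = C'$ as well.

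For existence I would first remove the dependence of $f$ on $(y,z)$. Define a map $\Phi$ sending $(y,z) \in \mathcal{S}^2 \times \mathcal{H}^{2,d}$ to the first two components of the solution of the DRBSDE driven by the frozen process $s \mapsto f(s,\omega,y_s,z_s)$, with the same data $\xi, L, U$. Equipping the space with the weighted norm $\mathbb{E}\!\left[\int_0^T e^{\beta s}\big(|y_s|^2 + |z_s|^2\big)\, ds\right]$ and reusing the sign cancellation of the reflection terms in the a priori estimate, one checks that $\Phi$ is a contraction for $\beta$ large enough. Its unique fixed point solves the full equation, so it suffices to treat the base case in which the driver is a given process $\phi_s := f(s,\omega,y_s,z_s)$.

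The base case is the heart of the argument. I would set
\begin{equation*}
Y_t := \essinf_{\sigma \in \mathcal{T}_{t,T}} \esssup_{\tau \in \mathcal{T}_{t,T}} \mathbb{E}\!\left[\int_t^{\sigma\wedge\tau}\!\!\phi_s\, ds + U_\sigma \mathbbm{1}_{\{\sigma \le \tau,\ \sigma < T\}} + L_\tau \mathbbm{1}_{\{\tau < \sigma\}} + \xi\, \mathbbm{1}_{\{\sigma\wedge\tau = T\}}\,\middle|\,\mathcal{F}_t\right].
\end{equation*}
Under complete separation a Mokobodzki-type condition holds automatically (a difference of supermartingales lying strictly between $L$ and $U$ is obtained by interpolation), so the upper and lower values coincide and $Y$ is a continuous process in $\mathcal{S}^2$. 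The shifted process $Y_\cdot + \int_0^\cdot \phi_s\, ds$ then admits a Doob--Meyer decomposition into a continuous martingale part and a continuous finite-variation part; the martingale representation theorem identifies the former as $\int_0^\cdot Z_s\, dB_s$, and splitting the latter into its increasing and decreasing components produces $A$ and $C$. The Skorokhod conditions (iii) are verified through the saddle point of the game, whose optimal stopping times are the first hitting times of the two barriers, so that $A$ charges only $\{Y=L\}$ and $C$ only $\{Y=U\}$.

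I expect the base case to be the main obstacle: guaranteeing continuity of the game value, producing a clean splitting of its finite-variation part, and matching the two increasing processes to the two barriers with the correct flat-off behaviour. The strict separation in \eqref{separated} is precisely what decouples the lower and upper reflections and makes this identification possible; an alternative route via double penalization (pushing $(Y-L)^-$ upward and $(Y-U)^+$ downward and passing to the monotone limit) would confront the same difficulty in the guise of establishing convergence and the minimality of the limiting increasing processes.
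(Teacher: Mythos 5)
The paper does not actually prove this statement: Theorem \ref{existence of solution DRBSDE} is imported from the literature (cited as Theorem 3.7 of \cite{hamadene2005}), so your proposal is being measured against the classical arguments rather than against anything in the text. Your uniqueness argument is correct and standard: the sign analysis of the finite-variation term via the flat-off conditions, Gronwall, the It\^o isometry for $Z$, and the recovery of $A=A'$, $C=C'$ from the uniqueness of the Jordan decomposition of $A-C$ on the disjoint contact sets $\{Y=L\}$ and $\{Y=U\}$ all go through. The contraction step reducing to a driver frozen in $(y,z)$ is also fine.

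The gap is in the base case, specifically in the parenthetical claim that complete separation $L_t<U_t$ ``automatically'' yields a Mokobodzki-type condition ``by interpolation.'' That is not true, and it is precisely the point on which the general result is delicate. Mokobodzki's condition asks for a \emph{difference of nonnegative supermartingales} squeezed between $L$ and $U$; a pointwise interpolant such as $(L+U)/2$ has no reason to be such a process, and strict separation of two arbitrary continuous $\mathcal{S}^2$ barriers does not by any direct construction produce one --- the two hypotheses are generally regarded as non-comparable sufficient conditions. Cvitani\'c and Karatzas prove existence under Mokobodzki's condition (or under a semimartingale/regularity assumption on one barrier), whereas the theorem as stated here, with only the separation hypothesis \eqref{separated}, is proved by Hamad\`ene and Hassani through a different mechanism: one constructs \emph{local} solutions on a sequence of random intervals on which the barriers are uniformly separated (so that locally only one reflection is active and the problem reduces to a singly reflected BSDE), and then pastes these local solutions together. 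Without Mokobodzki --- or an a posteriori appeal to the very existence you are trying to prove, since any solution $Y$ is itself a quasimartingale lying between the barriers --- the Doob--Meyer decomposition of the Dynkin-game value on which your construction of $Z$, $A$ and $C$ rests is not available. To repair the argument under the stated hypotheses you would need to replace the global game construction by the localization-and-pasting scheme, or else strengthen the assumptions to include Mokobodzki's condition or barrier regularity.
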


\subsection{Dynkin Games: Definition and Saddle-Point Structure}\label{sec:dynkin}
\hspace*{0.5 cm} We now introduce the stochastic model that serves as the basis for modeling the contract for differences in the main text. The problem is formulated as a two-player zero-sum Dynkin game, where each player strategically selects an optimal stopping time to maximize their respective payoffs.\\

We consider a zero-sum Dynkin game between two players, Player 1 and Player 2, who are interested in the same asset. The payoff is defined in terms of the underlying diffusion process \eqref{sde}, which models the asset dynamics. The admissible strategies of the players are stopping times with respect to the filtration $\lbrace \mathcal{F}_t \rbrace_{t\geq0}$. 

Let $t\in [0,T]$ be given, and let $\tau_1 \in  \mathcal{T}_{t, T}$ and $\tau_2 \in  \mathcal{T}_{t, T}$ be the stopping times associated with Player 1 and Player 2, respectively. The game between Player 1 and Player 2 is played from time $t$ until $\tau_1 \wedge \tau_2$, where $x \wedge y := \min(x, y)$. During this period, Player 1 pays Player 2 at a random rate $\varphi(s, X^{t,x}_s)$, which depends on both time $s$ and the underlying state process $X^{t,x}_s$.

For some Borel functions $f_1$, $f_2$ and $g$, the payoff structure of the game is defined as follows: If Player 1 exits the game prior to time $T$ and either before or simultaneously with Player 2, i.e., $\tau_1 < T$ and $\tau_1 \leq \tau_2$, Player 1 pays Player 2 an additional amount $f_1(\tau_1, X_{\tau_1})$. Conversely, if Player 2 exits the game first, i.e., $\tau_2 < \tau_1$, Player 2 pays Player 1 an amount $f_2(\tau_2, X_{\tau_2})$. If neither player exits the game before $T$, the game terminates at $\tau_1 = \tau_2 = T$, and Player 1 pays Player 2 a terminal amount $g(X^{t,x}_T)$. The payoff for the Dynkin game on $[t, T]$ is expressed in terms of the conditional expected cost to Player 1, as follows: 
\begin{align} \label{dynkin game}
    J_{t,x}(\tau_1, \tau_2) = \mathbb{E}\bigg[
        \int_t^{\tau_1 \wedge \tau_2} \varphi(s, X_s^{t, x}) \, ds
        &+ f_1(\tau_1, X^{t, x}_{\tau_1}) \mathbbm{1}_{\{\tau_1 \leq \tau_2, \tau_1 < T\}} 
        - f_2(\tau_2, X^{t, x}_{\tau_2}) \mathbbm{1}_{\{\tau_2 < \tau_1\}} \notag \\
        &+ g(X^{t, x}_T) \mathbbm{1}_{\{\tau_1 \wedge \tau_2 = T\}}
        \, \bigg| \, \mathcal{F}_t \bigg], \quad \tau_1, \tau_2 \in \mathcal{T}_{t, T}.   
\end{align}

Notice that the payoff $J_{t,x}(\tau_1, \tau_2)$ is a cost for Player 1 and a reward for Player 2. Therefore, the objective of Player 1 is to choose a strategy $\tau_1 \in \mathcal{T}_{t, T}$ to minimize the expected value $J_{t,x}(\tau_1, \tau_2)$, while Player 2 aims to choose a strategy $\tau_2 \in \mathcal{T}_{t, T}$ that maximizes it. This results in the upper and lower values for the game on $[t, T]$, denoted by $\overline{V}(t,x)$ and $\underline{V}(t,x)$, respectively:
\begin{equation*}
    \overline{V}(t,x) = \essinf_{\tau_1 \in \mathcal{T}_{t, T}} \esssup_{\tau_2 \in \mathcal{T}_{t, T}} J_{t,x}(\tau_1, \tau_2),
    \quad
    \underline{V}(t,x) = \esssup_{\tau_2 \in \mathcal{T}_{t, T}} \essinf_{\tau_1 \in \mathcal{T}_{t, T}} J_{t,x}(\tau_1, \tau_2).
\end{equation*}
    The Dynkin game on $[t, T]$ is considered ``fair'' and is said to have a value if the upper and lower values at time $t$ are equal. This condition can be expressed as:
\begin{equation} \label{dynkin}
    \essinf_{\tau_1 \in \mathcal{T}_{t, T}} \esssup_{\tau_2 \in \mathcal{T}_{t, T}} J_{t,x}(\tau_1, \tau_2) 
    = V(t,x) = \esssup_{\tau_2 \in \mathcal{T}_{t, T}} \essinf_{\tau_1 \in \mathcal{T}_{t, T}} J_{t,x}(\tau_1, \tau_2).
\end{equation}
The shared value, denoted by $V(t,x)$, is referred to as the solution or the value of the game on $[t, T]$. \\

When studying Dynkin games, the first step is to verify whether the game is fair. Subsequently, one seeks admissible strategies for the players that provide the game's value or approximate, i.e., determine whether the game has a saddle point. This leads to the concept of a Nash equilibrium. \\

\begin{definition}[Nash Equilibrium] \label{nash}
A pair of stopping times $(\tau_1^*, \tau_2^*) \in \mathcal{T}_{t, T} \times \mathcal{T}_{t, T}$ is said to constitute a Nash equilibrium or a saddle point for the game on $[t, T]$ if, for any $\tau_1, \tau_2 \in \mathcal{T}_{t, T}$:
\begin{equation*}
    J_{t,x}(\tau_1^*, \tau_2) \leq J_{t,x}(\tau_1^*, \tau_2^*) \leq J_{t,x}(\tau_1, \tau_2^*).
\end{equation*}
\end{definition}
It is straightforward to verify that the existence of a saddle point $(\tau_1^*, \tau_2^*) \in \mathcal{T}_{t, T} \times \mathcal{T}_{t, T}$ ensures that the game on $[t, T]$ is fair, and its value is given by:
\begin{equation*}
    \essinf_{\tau_1 \in \mathcal{T}_{t, T}} \esssup_{\tau_2 \in \mathcal{T}_{t, T}} J_{t,x}(\tau_1, \tau_2) 
    = J_{t,x}(\tau_1^*, \tau_2^*) 
    = \esssup_{\tau_2 \in \mathcal{T}_{t, T}} \essinf_{\tau_1 \in \mathcal{T}_{t, T}} J_{t,x}(\tau_1, \tau_2).
\end{equation*}

Let us now consider the functions 
\[
g : \mathbb{R}^d \to \mathbb{R}, \quad f_1, f_2 : [0, T] \times \mathbb{R}^d \to \mathbb{R}, \quad \varphi : [0, T] \times \mathbb{R}^d \to \mathbb{R},
\]
that satisfy the following assumptions:
\begin{assumption} \label{ass 2}
\begin{enumerate}
    \item Function $g$ is continuous and bounded.
    
    \item Functions $f_1$, and $f_2$ are bounded and continuous. Moreover, for any $(t, x) \in [0, T] \times \mathbb{R}^d$, 
    \begin{equation*}
        -f_2(t, x) < f_1(t, x).
    \end{equation*}
    \begin{equation*}
         -f_2(T, x) \leq g(x) \leq f_1(T, x).
    \end{equation*}
    \item Function $\varphi$ is  $\frac{1}{2}$ H\"older-continuous in $t$,  Lipschitz continuous and has linear growth in $x$. 
\end{enumerate}
\end{assumption} 

 The following theorem shows that the game problem defined above has a value (see e.g., \cite[Theorem 2.1, p. 686]{Peskir}). 

\begin{proposition} \label{value of the game}
Under Assumptions \ref{ass 2}, there exists a continuous $\mathcal{F}_t$-adapted process $(V(t,x))_{0 \leq t \leq T}$ such that for each $t$, the random variable $V(t,x)$ gives the fair value of the Dynkin game on $[t, T]$ assuming $X_t = x$. Furthermore, the debut times $\tau_{2, t}^*$ and $\tau_{1,t}^*$ defined by
\begin{equation*}
    \tau_{2, t}^* := \inf\{s \geq t : V(s,x) = -f_2(s, X_s^{t, x})\} \wedge T,
\end{equation*}
\begin{equation*}
    \tau_{1,t}^* := \inf\{s \geq t : V(s,x) = f_1(s, X_s^{t, x})\} \wedge T,
\end{equation*}
form a saddle point $(\tau_{1,t}^*, \tau_{2,t}^*)$ for the Dynkin game on $[t, T]$.  
\end{proposition}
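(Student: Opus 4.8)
The plan is to build the doubly reflected BSDE whose data are dictated by the game and then read off the value and the saddle point from its unique solution. Concretely, I would invoke Theorem~\ref{existence of solution DRBSDE} with driver $f(s,\omega,y,z):=\varphi(s,X^{t,x}_s)$ (independent of $(y,z)$), terminal datum $\xi:=g(X^{t,x}_T)$, lower barrier $L_s:=-f_2(s,X^{t,x}_s)$ and upper barrier $U_s:=f_1(s,X^{t,x}_s)$. Assumption~\ref{ass 2} supplies exactly the hypotheses needed: the $\tfrac12$-Hölder/Lipschitz/linear-growth regularity of $\varphi$ together with the moment bounds on $X^{t,x}$ gives square-integrability of the driver; boundedness and continuity of $f_1,f_2$ place $L,U$ in $\mathcal{S}^2$; the strict separation $-f_2<f_1$ yields $L_s<U_s$; and $-f_2(T,\cdot)\le g\le f_1(T,\cdot)$ yields $L_T\le\xi\le U_T$. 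Denote the unique solution by $(Y,Z,A,C)$ and set $V(t,x):=Y_t$, so that along the flow $V(s,X^{t,x}_s)=Y_s$ and the debut times in the statement become $\tau^*_{1,t}=\inf\{s\ge t:Y_s=f_1(s,X^{t,x}_s)\}\wedge T$ and $\tau^*_{2,t}=\inf\{s\ge t:Y_s=-f_2(s,X^{t,x}_s)\}\wedge T$. Continuity of the $\mathcal{F}_t$-adapted process is immediate since $Y\in\mathcal{S}^2$ has continuous paths.

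The crux is to identify $Y_t$ with the game value via one-sided comparisons. The key structural fact, extracted from the Skorokhod conditions~(iii), is that $C$ increases only while $Y=f_1$ and is therefore flat on $[t,\tau^*_{1,t})$, whereas $A$ increases only while $Y=-f_2$ and is flat on $[t,\tau^*_{2,t})$; since $A,C$ are continuous, their increments over $[t,\tau^*_{2,t}]$ resp.\ $[t,\tau^*_{1,t}]$ vanish. Fixing an arbitrary $\tau_1\in\mathcal{T}_{t,T}$, I would write~\eqref{doubly ref def} between $t$ and $\tau_1\wedge\tau^*_{2,t}$ and take $\mathcal{F}_t$-conditional expectations; because $Z\in\mathcal{H}^{2,d}$ the stopped stochastic integral is a true martingale and drops out. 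On $[t,\tau_1\wedge\tau^*_{2,t}]$ the increment of $A$ is zero and that of $C$ is nonnegative, so
\[
Y_t \le \mathbb{E}\!\left[\, Y_{\tau_1\wedge\tau^*_{2,t}} + \int_t^{\tau_1\wedge\tau^*_{2,t}}\varphi(s,X^{t,x}_s)\,ds \,\middle|\,\mathcal{F}_t \right].
\]
Using $Y\le f_1$ on $\{\tau_1\le\tau^*_{2,t},\,\tau_1<T\}$, the equality $Y_{\tau^*_{2,t}}=-f_2(\tau^*_{2,t},X^{t,x}_{\tau^*_{2,t}})$ on $\{\tau^*_{2,t}<\tau_1\}$ (valid by continuity at the debut time), and $Y_T=g(X^{t,x}_T)$ on the terminal event, the bracketed terminal value is dominated by the integrand of~\eqref{dynkin game}, giving $Y_t\le J_{t,x}(\tau_1,\tau^*_{2,t})$ for every $\tau_1$.

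The symmetric computation, fixing $\tau_2$ and stopping at $\tau^*_{1,t}\wedge\tau_2$, makes the increment of $C$ vanish and that of $A$ nonnegative, reversing the inequality and yielding $Y_t\ge J_{t,x}(\tau^*_{1,t},\tau_2)$ for every $\tau_2$. Combining the two bounds gives
\[
\esssup_{\tau_2\in\mathcal{T}_{t,T}} J_{t,x}(\tau^*_{1,t},\tau_2) \le Y_t \le \essinf_{\tau_1\in\mathcal{T}_{t,T}} J_{t,x}(\tau_1,\tau^*_{2,t}),
\]
whence $\overline{V}(t,x)\le Y_t\le\underline{V}(t,x)$; since always $\underline{V}\le\overline{V}$, all three coincide and equal $V(t,x)$, establishing~\eqref{dynkin}. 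Specializing $\tau_2=\tau^*_{2,t}$ and $\tau_1=\tau^*_{1,t}$ in the two one-sided bounds forces $J_{t,x}(\tau^*_{1,t},\tau^*_{2,t})=Y_t$ and then delivers the chain $J_{t,x}(\tau^*_{1,t},\tau_2)\le J_{t,x}(\tau^*_{1,t},\tau^*_{2,t})\le J_{t,x}(\tau_1,\tau^*_{2,t})$, i.e.\ the saddle-point property of Definition~\ref{nash}.

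The main obstacle I anticipate is the careful bookkeeping at the barriers: arguing rigorously from the Skorokhod minimality condition that $A$ (resp.\ $C$) stays flat up to the relevant debut time, and matching the stopped value $Y_{\tau_1\wedge\tau^*_{2,t}}$ to the correct indicator decomposition of the payoff~\eqref{dynkin game}, including the boundary events $\{\tau_1=\tau^*_{2,t}\}$ and $\{\cdot=T\}$. The continuity of $Y,f_1,f_2$ granted by Assumption~\ref{ass 2} is precisely what makes the debut-time equalities hold and closes these gaps.
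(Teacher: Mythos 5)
Your argument is correct, and it is a genuinely different route from the one the paper takes: the paper offers no proof of Proposition~\ref{value of the game}, but instead cites Peskir's direct Dynkin-game theorem, which constructs the value and the saddle point by Snell-envelope/martingale arguments without any BSDE machinery. You instead derive the proposition from the DRBSDE: invoke Theorem~\ref{existence of solution DRBSDE} with the game data, set $V(t,x)=Y_t^{t,x}$, and run the classical Cvitani\'c--Karatzas verification argument (stop the equation at $\tau_1\wedge\tau^*_{2,t}$ and at $\tau^*_{1,t}\wedge\tau_2$, use flatness of $A$ resp.\ $C$ up to the relevant debut time plus the sign of the other reflecting term, and read off the two one-sided bounds that sandwich $Y_t$ between $\overline V$ and $\underline V$ and yield the saddle-point chain of Definition~\ref{nash}). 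This is essentially a proof of Theorem~\ref{theo:link}, from which the proposition follows; the paper keeps the two as separate citations. Your route has the merit of being self-contained given the existence theorem and of directly producing the identification $V=Y$ that the paper uses everywhere afterwards; the cost is that it quietly needs Assumption~\ref{ass1} in addition to Assumption~\ref{ass 2} (for well-posedness and moments of $X^{t,x}$, hence square-integrability of the driver and barriers --- the proposition as stated lists only Assumption~\ref{ass 2}, but so does the paper, loosely), and the step ``along the flow $V(s,X^{t,x}_s)=Y^{t,x}_s$'' for $s>t$ leans on the Markov/flow property of the forward--backward system, which deserves a sentence. The bookkeeping at the barriers that you flag as the main obstacle is handled correctly: $A$ is flat on $[t,\tau^*_{2,t}]$ by the minimality condition and continuity, $Y_{\tau^*_{2,t}}=-f_2$ on $\{\tau^*_{2,t}<T\}$ by continuity of $Y$ and the barrier, and the three indicator events in \eqref{dynkin game} partition $\Omega$, so the stopped value is dominated by (resp.\ dominates) the payoff exactly as you claim.
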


It is well known that Dynkin game problems are closely linked to BSDEs with two reflecting barriers (see e.g., \cite{cvitanic_karatzas_1996, hamadene, hamadene_hassani}). We further explore this connection in the next section.

\subsection{DRBSDE Representation of Dynkin Games} \label{sec DRBSDEs}

\hspace*{0.5 cm} We now focus on the links between the zero-sum Dynkin game introduced in the last section and the solution of a corresponding DRBSDE with continuous barriers. The following theorem shows that, under Assumption \ref{ass1} and Assumption \ref{ass 2}, the game problem \ref{dynkin game} has a value. Moreover, its value is characterized in terms of the first component of the solution of a DRBSDE (cf., \cite[Theorem 3.8.]{hamadene2005}). 

\begin{theorem} \label{theo:link}
    Under the above assumptions \ref{ass1} and \ref{ass 2}, for any $(t,x)\in [0,T] \times \mathbb{R}^d$, there exists a unique process $\left(Y_s^{t,x}, Z_s^{t,x}, A_s^{t,x}, C_s^{t,x}\right)_{s \leq T}$ $\mathcal{P}$-measurable solution of the DRBSDE associated with 
\begin{equation*}
        \Big(
\varphi( \cdot, X^{t, x}_\cdot), \, g( X^{t, x}_T), \, 
f_1( \cdot, X^{t, x}_\cdot) , \, -f_2( \cdot, X^{t, x}_\cdot ) \Big),
\end{equation*}
 that is,  
\begin{enumerate}
    \item[(i)] $Y^{t,x} \in S^2, \, Z^{t,x} \in \mathcal{H}^{2,d}, \, A^{t,x}\in S_{ci}, \, \text{and} \, C^{t,x} \in S_{ci}$.
    \item[(ii)] For each $s \in [t, T],$
    \begin{equation} \label{doubly ref}
    Y^{t, x}_s = g(X^{t, x}_T)+  \displaystyle\int_s^T \varphi(r, X_r^{t, x}) dr
   - \displaystyle\int_s^T Z^{t, x}_r dB_r + (A^{t, x}_T - A^{t, x}_s)  
    - (C^{t, x}_T - C^{t, x}_s), 
    \end{equation}
    \item[(iii)] $ -f_2(s, X_s^{t, x}) \leq Y^{t,x}_s \leq f_1(s, X_s^{t, x}),$  
    \item[(iv)] $
    \displaystyle\int_t^T (Y^{t, x}_r - (- f_2(r, X_r^{t, x}))) dA^{t, x}_r = 
    \int_t^T (f_1(r, X_r^{t, x}) - Y^{t, x}_r) dC^{t, x}_r = 0.$ \\
\end{enumerate}
 Moreover, the first component of the solution $Y^{t,x}$ is the common value function of the Dynkin game \eqref{dynkin} on $[t , T ]$, i.e.,
 \begin{equation*}
     Y_t^{t,x}= \overline{V}(t,x) = \underline{V}(t,x).
 \end{equation*}
Furthermore, the pair of stopping times $(\tau_{1,t}^*,\tau_{2,t}^*)$ defined by
\begin{equation*}
    \tau_{2,t}^* := \inf\{s \geq t : Y_s^{t,x} = -f_2(s, X_s^{t, x})\} \wedge T,
\end{equation*}
\begin{equation*}
    \tau_{1,t}^* := \inf\{s \geq t : Y_s^{t,x} = f_1(s, X_s^{t, x})\} \wedge T,
\end{equation*}
form a saddle point for the Dynkin game on $[t, T]$. 
\end{theorem}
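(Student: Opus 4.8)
The plan is to separate the argument into (a) an appeal to the general well-posedness result and (b) a verification argument that simultaneously identifies $Y^{t,x}_t$ with the game value and shows that $(\tau^*_{1,t},\tau^*_{2,t})$ is a saddle point in the sense of Definition~\ref{nash}. Existence and uniqueness of the quadruple are not the substance of the proof: I would simply check that the data $\bigl(\varphi(\cdot,X^{t,x}_\cdot),\,g(X^{t,x}_T),\,f_1(\cdot,X^{t,x}_\cdot),\,-f_2(\cdot,X^{t,x}_\cdot)\bigr)$ satisfies the hypotheses \eqref{lipschi}--\eqref{separated} of Theorem~\ref{existence of solution DRBSDE}. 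Under Assumption~\ref{ass1} the SDE \eqref{sde} has a unique solution with finite moments, so the driver $\varphi(\cdot,X^{t,x}_\cdot)$ is square-integrable by the linear growth in part~3 of Assumption~\ref{ass 2}, and $g(X^{t,x}_T)\in\mathbb{L}^2$ by boundedness of $g$. The barriers $L_s=-f_2(s,X^{t,x}_s)$ and $U_s=f_1(s,X^{t,x}_s)$ are continuous, in $\mathcal{S}^2$, strictly separated, and compatible with the terminal value, which is exactly parts~1--2 of Assumption~\ref{ass 2}. This delivers the unique solution obeying (i)--(iv).

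The heart of the proof is the verification step. First I would write the DRBSDE \eqref{doubly ref} between $t$ and an arbitrary stopping time $\tau\in\mathcal{T}_{t,T}$ and take $\mathcal{F}_t$-conditional expectations; since $Z^{t,x}\in\mathcal{H}^{2,d}$ the stochastic integral is a true martingale, so its increment drops out and
\[
Y^{t,x}_t = \mathbb{E}\Bigl[\,Y^{t,x}_\tau + \int_t^\tau \varphi(r,X^{t,x}_r)\,dr + (A^{t,x}_\tau - A^{t,x}_t) - (C^{t,x}_\tau - C^{t,x}_t)\,\Big|\,\mathcal{F}_t\Bigr].
\]
Then I would exploit the minimality conditions (iv): $C^{t,x}$ can increase only on $\{Y^{t,x}=f_1\}$ and $A^{t,x}$ only on $\{Y^{t,x}=-f_2\}$, so by continuity $C^{t,x}$ is flat on $[t,\tau^*_{1,t}]$ and $A^{t,x}$ is flat on $[t,\tau^*_{2,t}]$. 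Fixing Player~1 at $\tau^*_{1,t}$ and taking $\tau=\tau^*_{1,t}\wedge\tau_2$, flatness of $C$ and monotonicity of $A$ give $Y^{t,x}_t\ge \mathbb{E}[\,Y^{t,x}_\tau+\int_t^\tau\varphi\,dr\mid\mathcal{F}_t]$; a case split over $\{\tau^*_{1,t}\le\tau_2,\tau^*_{1,t}<T\}$, $\{\tau_2<\tau^*_{1,t}\}$, and $\{\tau=T\}$, combined with continuity of $Y$ (so $Y_{\tau^*_{1,t}}=f_1$), the barrier inequality (iii), and $Y_T=g(X_T)$, shows $Y^{t,x}_\tau$ dominates the corresponding terminal payoff terms in \eqref{dynkin game}, whence $J_{t,x}(\tau^*_{1,t},\tau_2)\le Y^{t,x}_t$ for every $\tau_2$. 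The mirror-image computation with Player~2 fixed at $\tau^*_{2,t}$, using flatness of $A$ and the reverse barrier inequalities, yields $J_{t,x}(\tau_1,\tau^*_{2,t})\ge Y^{t,x}_t$ for every $\tau_1$. Hence $\overline{V}(t,x)\le Y^{t,x}_t\le\underline{V}(t,x)$, and since $\underline{V}\le\overline{V}$ always holds, the three values coincide; setting $\tau_1=\tau^*_{1,t},\tau_2=\tau^*_{2,t}$ gives $J_{t,x}(\tau^*_{1,t},\tau^*_{2,t})=Y^{t,x}_t$, so the saddle-point inequalities hold.

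The hard part will be the boundary bookkeeping rather than any single estimate. Two points demand care: first, justifying that the stopped stochastic integral is genuinely a martingale for the whole family of admissible stopping times, which I would secure from the $\mathcal{H}^{2,d}$ integrability via a localization and uniform-integrability argument; second, matching the three-way partition of $\Omega$ in \eqref{dynkin game} term-by-term with the three cases above, where the identity $Y_{\tau^*_{1,t}}=f_1(\tau^*_{1,t},X_{\tau^*_{1,t}})$ (and its analogue at $\tau^*_{2,t}$) relies crucially on continuity of both $Y^{t,x}$ and the barriers. Extracting the flatness of $A$ and $C$ before the respective debut times from the Skorokhod conditions is the structural pivot of the whole argument, and I would state it as a short standalone observation before running the two symmetric inequalities.
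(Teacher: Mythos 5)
Your verification argument is correct, but note that the paper does not actually prove Theorem~\ref{theo:link}: it is stated as a recalled classical result, with the well-posedness and the game-theoretic identification attributed to \cite{cvitanic_karatzas_1996} and \cite[Theorem 3.8]{hamadene2005}, and the accompanying remark explicitly disclaims originality. What you have written is, in substance, the standard proof from that literature: reduction of existence and uniqueness to Theorem~\ref{existence of solution DRBSDE} by checking square-integrability of the driver (linear growth of $\varphi$ plus moment bounds on $X^{t,x}$), boundedness of $g$, and the strict separation and terminal compatibility of the barriers from Assumption~\ref{ass 2}; then the optional-stopping verification step in which the Skorokhod minimality conditions force $A^{t,x}$ (resp.\ $C^{t,x}$) to be flat before the debut time of the lower (resp.\ upper) barrier, yielding the two one-sided inequalities $J_{t,x}(\tau^*_{1,t},\tau_2)\le Y^{t,x}_t\le J_{t,x}(\tau_1,\tau^*_{2,t})$ and hence the saddle point together with $\overline{V}=\underline{V}=Y^{t,x}_t$. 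Two small simplifications are available relative to the cautions you raise: since $Z^{t,x}\in\mathcal{H}^{2,d}$, the stochastic integral is already a true square-integrable martingale on $[t,T]$, so optional stopping applies directly and no localization or uniform-integrability argument is needed; and the three-way case split is exactly the partition built into \eqref{dynkin game}, so the boundary bookkeeping reduces to the two observations $Y_{\tau^*_{1,t}}=f_1(\tau^*_{1,t},X_{\tau^*_{1,t}})$ on $\{\tau^*_{1,t}<T\}$ (continuity of $Y^{t,x}$ and of $f_1(\cdot,X_\cdot)$) and $Y_T=g(X_T)$. So your route is genuinely different from the paper's only in that you supply the argument the paper delegates to a citation; it buys a self-contained treatment at the cost of reproving a known result.
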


 \begin{remark}
 This result is classical and not original to this work. The well-posedness of DRBSDEs with Brownian filtration, continuous data, and time dependent barriers was first established in \cite{cvitanic_karatzas_1996} and extended to more general settings (e.g., discontinuous obstacles, irregular filtrations, and Poisson jumps) in \cite{hamadene2005, hamadene_hassani, Ouknine2024, grigorova2018doubly}. We include Theorem~\ref{theo:link} for completeness and to support the Deep DRBSDE  solver construction in Sections \ref{sec:deep_learning}-\ref{sec:num experiments}.
 \end{remark}

Now, notice that given a solution $\left(Y_s^{t,x}, Z_s^{t,x}, A_s^{t,x}, C_s^{t,x}\right)$ to the DRBSDE \eqref{doubly ref} satisfying conditions $(ii)$ to $(iv)$, the problem corresponds, in a deterministic framework, to a Skorokhod problem with two time dependent boundaries. Consequently, by applying some well known properties of the Skorokhod problem, we can derive an explicit formula for the increasing processes $A_s^{t,x}$ and $C_s^{t,x}$.\\

Recall the Skorokhod problem (SP) on a time varying interval $ [\alpha_\cdot, \beta_\cdot]$. 

\begin{definition}(Skorokhod problem)
    Let $\alpha, \beta \in D[0, \infty)$ such that $\alpha \leq \beta$. Given $x \in D[0, \infty)$, a pair of functions $(y, \eta) \in D[0, \infty) \times BV[0, \infty)$ is said to be a solution of the Skorokhod problem on $[\alpha, \beta]$ for $x$ if the following two properties are satisfied:

\begin{enumerate}
    \item[(i)] $y_t= x_t + \eta_t \in [\alpha_t, \beta_t]$, for every $t\geq 0$. 
    \item[(ii)] $\eta(0^-)=0$, and $\eta$ has the decomposition $\eta:= \eta^l - \eta^u$, where $\eta^l, \eta^u \in I[0, \infty)$, 
    \begin{equation*} 
        \displaystyle\int_0^\infty \mathbbm{1}_{ \lbrace y_s < \beta_s \rbrace} d\eta^u_s=  \int_0^\infty \mathbbm{1}_{ \lbrace y_s > \alpha_s \rbrace} d\eta^l_s= 0.
    \end{equation*}
\end{enumerate} 
If $(y, \eta)$ is the unique solution to the SP on $[\alpha_\cdot, \beta_\cdot]$ for $x$, then we will write 
$y = \Gamma_{\alpha, \beta}(x)$, and refer to $\Gamma_{\alpha, \beta}$ as the associated Skorokhod map (SM).
\end{definition}
In  \cite[Theorem 2.6]{burdzy2009skorokhod} found an explicit representation for the so called extended Skorokhod map (ESM), which is a relaxed version of the SP, see Definition 2.2 in \cite{burdzy2009skorokhod}. They show that for any 
$\alpha \in D^{-}[0,\infty)$ and $\beta \in D^{+}[0,\infty)$ such that $\alpha \leq \beta$, 
there is a well-defined ESM $\bar{\Gamma}_{\alpha,\beta}: D[0,\infty) \to D[0,\infty)$ and it is represented by
\begin{equation} \label{formula}
    \bar{\Gamma}_{\alpha,\beta}(x) = x - \Xi_{\alpha,\beta}(x),
\end{equation}
where $\Xi_{\alpha,\beta}(x) : D[0,\infty) \to D[0,\infty)$ is given by
\begin{align*} 
     \Xi_{\alpha,\beta}(x)(t) = \max \Bigg\{ \Big[ (x_0 - \beta_0)^+ 
    \wedge \inf_{0 \leq r \leq t} (x_r - \alpha_r) \Big], \notag\\
    \sup_{0 \leq s \leq t} \Big[ (x_s - \beta_s) \wedge \inf_{s \leq r \leq t} (x_r - \alpha_r) \Big] \Bigg\}. 
\end{align*}
Moreover, if $\inf_{t \geq 0} (\beta(t) - \alpha(t)) > 0$, the ESM $\bar{\Gamma}_{\alpha,\beta}$ can be identified with the SM $\Gamma_{\alpha, \beta}$.

Slaby in \cite{slaby2010explicit} obtained an alternative form of the explicit formula \eqref{formula} that is simpler to understand and that we will use in the following to derive an explicit expression for the processes $A^{t,x}$ and $C^{t,x}$. \\

Let us first introduce the following notations:  for $x_t \in D[0, \infty)$, we denote by $T_\alpha$ and $T_\beta$ the pair of times:
    \begin{align*}
        T_\alpha &:= \min\{s > 0 : \alpha_s - x_s \geq 0\},  \\
        T_\beta &:= \min\{s > 0 : x_s - \beta_s \geq 0\},
    \end{align*} 
    and the functions \begin{align*}
    H_{\alpha,\beta}(x)(t) &= \sup_{0 \leq s \leq t} \left[ \left( x_s - \beta_s \right) 
    \wedge \inf_{s \leq r \leq t} \left( x_r - \alpha_r \right) \right], \\
    L_{\alpha,\beta}(x)(t) &= \inf_{0 \leq s \leq t} \left[ \left( x_s - \alpha_s \right) 
    \vee \sup_{s \leq r \leq t} \left( x_r - \beta_r \right) \right].
\end{align*}

The next result provides an alternative representation formula for \eqref{formula} and corresponds to \cite[Corollary 2.20]{slaby2010explicit}.

\begin{corollary}
Let $\alpha \in D^{-}[0,\infty)$, $\beta \in D^{+}[0,\infty)$ be such that $\inf_{t \geq 0} (\beta(t) - \alpha(t)) > 0$. Then, for every $x \in D[0,\infty)$,
\begin{equation*}
    \Xi_{\alpha,\beta} (x)(t) = \mathbbm{1}_{\{T^\beta < T^\alpha\}} \mathbbm{1}_{[T^\beta, \infty)}(t) H_{\alpha,\beta} (x)(t) 
    + \mathbbm{1}_{\{T^\alpha < T^\beta\}} \mathbbm{1}_{[T^\alpha, \infty)}(t) L_{\alpha,\beta} (x)(t).
\end{equation*}
\end{corollary}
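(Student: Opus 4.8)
The plan is to prove the identity pointwise in $t$ by conditioning on which of the two barriers is reached first, exploiting the strict separation $\inf_{t\ge 0}(\beta(t)-\alpha(t))>0$. This separation guarantees that $x$ cannot meet $\alpha$ and $\beta$ at the same instant, so the events $\{T^\beta<T^\alpha\}$, $\{T^\alpha<T^\beta\}$, and $\{T^\alpha=T^\beta=\infty\}$ are exhaustive and mutually exclusive. In the last event $x$ stays strictly inside $(\alpha,\beta)$ for all finite $t$, both indicators on the right-hand side vanish, and the explicit expression for $\Xi_{\alpha,\beta}$ underlying \eqref{formula} gives $\Xi_{\alpha,\beta}(x)\equiv 0$, matching the claim. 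It therefore suffices to treat the two first-exit regimes, and within each the sub-interval $t<T^\beta\wedge T^\alpha$ is handled identically ($x\in(\alpha,\beta)$ forces $\Xi_{\alpha,\beta}(x)(t)=0$ while both indicators are still off).

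For the regime $T^\beta<T^\alpha$ I would show that the maximum in the representation of $\Xi_{\alpha,\beta}$ is attained by its second argument, i.e. $\Xi_{\alpha,\beta}(x)(t)=H_{\alpha,\beta}(x)(t)$ for $t\ge T^\beta$, which is exactly the right-hand side there. Writing $A(t):=(x_0-\beta_0)^+\wedge\inf_{0\le r\le t}(x_r-\alpha_r)$ for the first argument, it suffices to prove $A(t)\le H_{\alpha,\beta}(x)(t)$. The crucial device is to evaluate the supremum defining $H$ at $s=T^\beta$: there $x_{T^\beta}-\beta_{T^\beta}\ge 0$, and since $\inf_{T^\beta\le r\le t}(x_r-\alpha_r)\ge\inf_{0\le r\le t}(x_r-\alpha_r)$, the resulting single term already dominates $A(t)$ in every sub-case (whether $x$ starts inside $[\alpha_0,\beta_0]$ or above $\beta_0$, and whether $t$ lies before or after $T^\alpha$). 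Hence $\max\{A,H\}=H$ throughout $t\ge T^\beta$, giving the claim in this regime.

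The regime $T^\alpha<T^\beta$ I would obtain by a reflection symmetry rather than by repeating the estimate. Under the reflection $x\mapsto -x$, $(\alpha,\beta)\mapsto(-\beta,-\alpha)$ the hitting times swap, $T^\alpha\leftrightarrow T^\beta$, and a direct substitution in the definitions gives $H_{-\beta,-\alpha}(-x)=-L_{\alpha,\beta}(x)$. Since the extended Skorokhod map commutes with reflection through the origin, $\bar\Gamma_{-\beta,-\alpha}(-x)=-\bar\Gamma_{\alpha,\beta}(x)$, whence $\Xi_{-\beta,-\alpha}(-x)=-\Xi_{\alpha,\beta}(x)$. In this regime the reflected data has its upper barrier $-\alpha$ reached first, so the previous paragraph applied to $(-x,-\beta,-\alpha)$ yields $\Xi_{-\beta,-\alpha}(-x)=H_{-\beta,-\alpha}(-x)=-L_{\alpha,\beta}(x)$ for $t\ge T^\alpha$; combining with $\Xi_{-\beta,-\alpha}(-x)=-\Xi_{\alpha,\beta}(x)$ gives $\Xi_{\alpha,\beta}(x)=L_{\alpha,\beta}(x)$, the right-hand side here. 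Assembling the three regimes with their indicators completes the proof.

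The step I expect to be the main obstacle is the inequality $A(t)\le H_{\alpha,\beta}(x)(t)$ for $t\ge T^\alpha$ in the first regime, where both barriers have been met and the nested $\sup$--$\inf$ structure of $H$ interacts with the running infimum in $A$; the choice $s=T^\beta$ is the essential device, and one must handle the RCLL subtleties (touching versus jumping across a barrier, and whether the relevant infima and suprema are attained). Once this estimate is secured, the reflection argument renders the second regime essentially free, so the bulk of the work is concentrated in the first regime.
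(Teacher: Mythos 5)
The paper does not actually prove this statement: it is quoted verbatim from Slaby's paper (cited there as Corollary 2.20), so there is no in-paper argument to compare yours against. Judged on its own terms, your proof is essentially correct. The trichotomy is exhaustive because $T^{\alpha}=T^{\beta}<\infty$ would force $\beta_{T}\le x_{T}\le\alpha_{T}$, contradicting the strict separation; the pre-hitting and no-hitting cases reduce to $A(t)=0$ and $H_{\alpha,\beta}(x)(t)\le 0$, giving $\Xi=0$; and your key inequality $A(t)\le H_{\alpha,\beta}(x)(t)$ for $t\ge T^{\beta}$ does go through -- when $x_{0}\ge\beta_{0}$ the $s=0$ term of the supremum already equals $A(t)$, and when $x_{0}<\beta_{0}$ one has $A(t)=0\wedge\inf_{0\le r\le t}(x_{r}-\alpha_{r})$, which is dominated by the $s=T^{\beta}$ term since $x_{T^{\beta}}-\beta_{T^{\beta}}\ge 0$ (right-continuity of $x-\beta$ guarantees this at the debut time, so the RCLL worry you flag is resolvable). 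Two points deserve care. First, the reflection step is not a "direct substitution" in the explicit max formula: the first argument $(x_{0}-\beta_{0})^{+}\wedge\inf_{0\le r\le t}(x_{r}-\alpha_{r})$ does not transform into its counterpart under $x\mapsto -x$, $(\alpha,\beta)\mapsto(-\beta,-\alpha)$, so you genuinely need to route the symmetry through the identification $\Xi_{\alpha,\beta}(x)=x-\bar{\Gamma}_{\alpha,\beta}(x)$ and the uniqueness of the extended Skorokhod problem (if $(y,\eta)$ solves it for $(x,\alpha,\beta)$ then $(-y,-\eta)$ solves it for $(-x,-\beta,-\alpha)$); this is legitimate since \eqref{formula} is taken as given, but it should be stated as such. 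Second, the identity $H_{-\beta,-\alpha}(-x)=-L_{\alpha,\beta}(x)$, which you assert, does check out by pushing the minus sign through the nested $\sup$--$\inf$, and is worth writing out since the whole second regime rests on it.
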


In our setting, the doubly reflected BSDE naturally induces a Skorokhod problem, where the increasing processes \( A^{t,x} \) and \( C^{t,x} \) correspond to the minimal efforts required to keep the value process \( Y^{t,x} \) within the time dependent barriers \( [-f_2(t, X_t^{t,x}), f_1(t, X_t^{t,x})] \). While this connection is implicit in the general theory of DRBSDEs, we provide an explicit and original representation of the reflecting processes in terms of infima and suprema over path functionals. 
\vspace{1em}

The following proposition is, to the best of our knowledge, new. It offers a novel Skorokhod type formulation for the reflection terms \( A^{t,x} \) and \( C^{t,x} \), derived directly from the structure of the DRBSDE solution. This characterization enables a deeper understanding of the behavior of the reflecting terms and may offer further numerical advantages in simulation.

    \begin{proposition}
    \label{theo:sko}
Under Assumptions \ref{ass1} and \ref{ass 2}, let $(t,x)\in [0,T] \times \mathbb{R}^d$. Let  $\left(Y_s^{t,x}, Z_s^{t,x}, A_s^{t,x}, C_s^{t,x}\right)_{t \leq s \leq T}$ be a solution of the DRBSDE \eqref{doubly ref} satisfying conditions $(ii)$ to $(iv)$. Then, for each $s \in[t,T]$,
  \begin{align*}  
    A^{t,x}_s &= \mathbbm{1}_{\{T_2 < T_1\}} \mathbbm{1}_{[T_2, \infty)}(s) 
    \Bigg[
    \inf_{0 \leq r \leq T - s}  \Bigg\lbrace(x_r - f_1(r, X_r^{t, x})) \vee \sup_{r \leq u \leq T - s} (x_u + f_2(u, X_u^{t, x})) \Bigg\rbrace \notag \\
    &- \inf_{0 \leq r \leq T} \Bigg\lbrace (x_r - f_1(r, X_r^{t, x})) \vee \sup_{r \leq u \leq T} (x_u + f_2(u, X_u^{t, x})) \Bigg\rbrace \Bigg],
\end{align*}
\begin{align*}
    C^{t,x}_s &= - \mathbbm{1}_{\{T_1 < T_2\}} \mathbbm{1}_{[T_1, \infty)}(s) 
     \Bigg[ \sup_{0 \leq r \leq T - s} \left\lbrace(x_r + f_2(r, X_r^{t, x})) \wedge \inf_{r \leq u \leq T - s} (x_u - f_1(u, X_u^{t, x}))\right\rbrace 
     \notag \\
    &- \sup_{0 \leq r \leq T}  \Bigg\lbrace (x_r + f_2(r, X_r^{t, x}))\wedge \inf_{r \leq u \leq T} (x_u - f_1(u, X_u^{t, x})) \Bigg\rbrace \Bigg], 
\end{align*}
where
\begin{align*}
        x_s &= g(X^{t, x}_T) + \int_{T - s}^T  \varphi(r, X_r^{t, x}) dr - \int_{T - s}^T Z^{t, x}_r dB_r,  \\
        T_2 &:= \min\{s > t : -f_2(s, X_s^{t, x}) - x_s \geq 0\},  \\
        T_1 &:= \min\{s > t : x_s - f_1(s, X_s^{t, x}) \geq 0\},
    \end{align*} 
\end{proposition}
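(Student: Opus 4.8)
The plan is to recognize the two reflecting processes $A^{t,x}$ and $C^{t,x}$ as the pushing terms of a deterministic, pathwise Skorokhod problem on the time-dependent channel $[-f_2, f_1]$, and then to read off their explicit form from the Burdzy et al.\ representation \eqref{formula} in the sharpened form of Slaby's corollary. Since the DRBSDE \eqref{doubly ref} runs backward from $T$ while the Skorokhod map of \cite{burdzy2009skorokhod, slaby2010explicit} is formulated forward from $0$, the first step is a time reversal. Setting $v := T - s$, I introduce the reversed driving path $x_v := g(X_T^{t,x}) + \int_{T-v}^T \varphi(r,X_r^{t,x})\, dr - \int_{T-v}^T Z_r^{t,x}\, dB_r$ (exactly the $x_\cdot$ of the statement), together with $\widehat Y_v := Y_{T-v}^{t,x}$, $\widehat A_v := A_T^{t,x} - A_{T-v}^{t,x}$ and $\widehat C_v := C_T^{t,x} - C_{T-v}^{t,x}$, the latter two being continuous, nondecreasing, and vanishing at $v=0$. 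Evaluating \eqref{doubly ref} at $s = T-v$ turns it into the forward identity $\widehat Y_v = x_v + \widehat A_v - \widehat C_v$.

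Next I verify that $(\widehat Y, \widehat A - \widehat C)$ solves the Skorokhod problem on $[\alpha, \beta]$ for $x$, where $\alpha_v := -f_2(T-v, X_{T-v}^{t,x})$ and $\beta_v := f_1(T-v, X_{T-v}^{t,x})$. Condition (iii) gives $\widehat Y_v \in [\alpha_v, \beta_v]$, which is property (i) with $\eta := \widehat A - \widehat C$ and the decomposition $\eta^l := \widehat A$, $\eta^u := \widehat C$. The minimality condition (iv), $\int (Y + f_2)\, dA = \int (f_1 - Y)\, dC = 0$, is a local flat-off statement: $A$ grows only on $\{Y = -f_2\}$ and $C$ only on $\{Y = f_1\}$. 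Because time reversal merely relabels the time axis and leaves the contact sets invariant, this is precisely $\int \mathbbm{1}_{\{\widehat Y > \alpha\}}\, d\widehat A = \int \mathbbm{1}_{\{\widehat Y < \beta\}}\, d\widehat C = 0$, i.e.\ property (ii). To invoke the single-valued map rather than the extended one, I check that the channel is non-degenerate pathwise: along a fixed continuous trajectory $s \mapsto X_s^{t,x}(\omega)$ the image is compact and, by Assumption \ref{ass 2}, $f_1 + f_2$ is continuous and strictly positive, so $\inf_v(\beta_v - \alpha_v) > 0$ almost surely. This licenses the identification of the extended Skorokhod map with $\Gamma_{\alpha,\beta}$, and uniqueness of the DRBSDE solution (Theorem \ref{existence of solution DRBSDE}) guarantees that the $\widehat A, \widehat C$ just constructed are the only candidates.

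I then apply Slaby's corollary to $\Xi_{\alpha,\beta}(x) = x - \widehat Y = \widehat C - \widehat A$. The corollary splits $\Xi$ according to which barrier is reached first: on $\{T^\beta < T^\alpha\}$ it equals $\mathbbm{1}_{[T^\beta,\infty)} H_{\alpha,\beta}(x) \ge 0$, and on $\{T^\alpha < T^\beta\}$ it equals $\mathbbm{1}_{[T^\alpha,\infty)} L_{\alpha,\beta}(x) \le 0$. Since $\widehat C = \eta^u$ is the downward push at the upper barrier and $\widehat A = \eta^l$ the upward push at the lower barrier, these are exactly the positive and the negative contributions to $\Xi$, whence $\widehat C_v = \mathbbm{1}_{\{T^\beta < T^\alpha\}}\mathbbm{1}_{[T^\beta,\infty)}(v) H_{\alpha,\beta}(x)(v)$ and $\widehat A_v = -\mathbbm{1}_{\{T^\alpha < T^\beta\}}\mathbbm{1}_{[T^\alpha,\infty)}(v) L_{\alpha,\beta}(x)(v)$. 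Finally I undo the reversal through $A_s^{t,x} = \widehat A_T - \widehat A_{T-s}$ and $C_s^{t,x} = \widehat C_T - \widehat C_{T-s}$ (using $A_0 = C_0 = 0$), which renders each reflection term as the difference of the corresponding functional evaluated at horizons $T$ and $T-s$; after rewriting $x_r - \alpha_r = x_r + f_2$ and $x_r - \beta_r = x_r - f_1$ and relabeling the reversed first-passage times as $T_1 = T^\beta$, $T_2 = T^\alpha$, this yields the stated expressions for $A^{t,x}_s$ and $C^{t,x}_s$.

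The main obstacle I expect lies in the second and third steps combined with the bookkeeping of the reversal: one must ensure that reversing time does not disturb the causal structure encoded in the nested $\sup$ and $\inf$ functionals, so that the forward Slaby formula, applied to $x_v$, reproduces the correct pushes. This is exactly where the order of $\sup$ and $\inf$ inside $H_{\alpha,\beta}$ and $L_{\alpha,\beta}$ must be matched to the DRBSDE data, and where the separation of $\Xi$ into its $A$- and $C$-parts via the which-barrier-first dichotomy has to be justified rather than merely asserted. A secondary technical point is that the Skorokhod results are deterministic, so the whole argument is carried out $\omega$ by $\omega$, relying on the almost-sure continuity of $Y^{t,x}$ and $X^{t,x}$, hence of $x_\cdot$ and of the barriers.
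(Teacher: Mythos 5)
Your proposal follows essentially the same route as the paper's own proof: rewrite the DRBSDE as a time-reversed forward equation, identify $(Y^{t,x}_{T-\cdot}, A^{t,x}-C^{t,x})$ as the solution of a pathwise Skorokhod problem on the channel $[-f_2,f_1]$, apply the Slaby/Burdzy explicit formula for $\Xi_{\alpha,\beta}$, and separate the $A$- and $C$-contributions via the which-barrier-first dichotomy justified by the minimality condition (iv). If anything, your verification of the Skorokhod problem conditions and of the non-degeneracy $\inf_v(\beta_v-\alpha_v)>0$ is more explicit than the paper's, which simply asserts these facts.
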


\begin{proof}
 First, we write the equation \eqref{doubly ref} in its forward form as
\begin{equation*} 
    Y^{t, x}_s = Y^{t, x}_0 -  \displaystyle\int_0^s \varphi(r, X_r^{t, x}) dr
   + \displaystyle\int_0^s Z^{t, x}_r dB_r - K^{t, x}_s, 
    \end{equation*}
where $K^{t,x}_s:= A^{t, x}_s - C^{t, x}_s$. Notice that by Assumption \ref{ass 2}, the pair $(Y^{t, x}_{T-s}, K^{t, x}_{T-s}- K^{t, x}_T)_{0 \leq s \leq T-t}$ solves a Skorokhod problem on $[t,T]$ and the solution pair $(Y^{t, x}_{T-s}, K^{t, x}_{T-s}- K^{t, x}_T)$ can be represented as
\begin{align*}
        K^{t, x}_{T-s}- K^{t, x}_T &= \Xi_{\alpha,\beta} (x_s) \\
        Y^{t, x}_{T-s} &=x_s-\Xi_{\alpha,\beta} (x_s),
\end{align*}
 where
\begin{align*}
     x_s &= g(X^{t, x}_T) + \int_{T - s}^T  \varphi(r, X_r^{t, x}) dr - \int_{T - s}^T Z^{t, x}_r dB_r \\
     \alpha(r)&= -f_2(r, X_r^{t, x})\\
     \beta(r)&=f_1(r, X_r^{t, x}) \\
     \Xi_{\alpha,\beta} (x_s)&= \mathbbm{1}_{\{T_1 < T_2\}} \mathbbm{1}_{[T_1, \infty)}(s) H_{\alpha,\beta} (x)(s) 
    + \mathbbm{1}_{\{T_2 < T_1\}} \mathbbm{1}_{[T_2, \infty)}(s) L_{\alpha,\beta} (x)(s). 
\end{align*}
It follows that 
\begin{align*}
    K^{t, x}_s&= \Xi_{\alpha,\beta} (x_{T-s}) - \Xi_{\alpha,\beta} (x_{T}) \notag\\
    &=  \mathbbm{1}_{\{T_1 < T_2\}} \mathbbm{1}_{[T_1, \infty)}(s) \Big[ H_{\alpha,\beta} (x)(T-s) - H_{\alpha,\beta} (x)(T) \Big] \notag  \\
    &+ \mathbbm{1}_{\{T_2 < T_1\}} \mathbbm{1}_{[T_2, \infty)}(s) \Big[  L_{\alpha,\beta} (x)(T-s) - L_{\alpha,\beta} (x)(T)\Big]\\
    &=A^{t, x}_s - C^{t, x}_s. \notag
\end{align*}
By the Skorokhod condition $(iv)$:
\begin{equation*}
     \displaystyle\int_t^T \Big(Y^{t, x}_r - (- f_2(r, X_r^{t, x}))\Big) dA^{t, x}_r = \int_t^T \Big(f_1(r, X_r^{t, x}) - Y^{t, x}_r\Big) dC^{t, x}_r = 0,
\end{equation*}
we conclude the following: If $T_2 < T_1$, the process $Y_\cdot^{t, x}$ reaches the lower boundary $- f_2(\cdot, X^{t, x})$, so the minimal push $A_\cdot^{t, x}$ is applied to keep the solution inside the two obstacles $- f_2(\cdot, X^{t, x})$ and $f_1(\cdot, X^{t, x})$. Hence,  
\begin{equation*}
    A^{t, x}_s= \mathbbm{1}_{[T_2, \infty)}(s) \Big[  L_{\alpha,\beta} (x)(T-s) - L_{\alpha,\beta} (x)(T)\Big].
\end{equation*}
Conversely, when $T_1 < T_2$, the process $Y_\cdot^{t, x}$ reaches the upper boundary $f_1(\cdot, X^{t, x})$, and the minimal push $C_\cdot^{t, x}$ is applied. Hence, 
\begin{equation*}
    C^{t, x}_s= -\mathbbm{1}_{[T_1, \infty)}(s) \Big[  H_{\alpha,\beta} (x)(T-s) - H_{\alpha,\beta} (x)(T)\Big].
\end{equation*}
The proof is then complete. 
\end{proof}

\bibliography{References}

@article{ADP,
  title={Contracts for difference: the instrument of choice for the energy transition},
  author={Ason, Agnieszka and Dal Poz, Julio},
  number={34},
  year={2024},
  publisher={OIES Paper: ET}
}

@article{bayraktar,
  title={A neural network approach to high-dimensional optimal switching problems with jumps in energy markets},
  author={Bayraktar, Erhan and Cohen, Asaf and Nellis, April},
  journal={SIAM Journal on Financial Mathematics},
  volume={14},
  number={4},
  pages={1028--1061},
  year={2023},
  publisher={SIAM}
}

@article{BGJWK,
  title={The enduring role of contracts for difference in risk management and market creation for renewables},
  author={Beiter, Philipp and Guillet, J{\'e}r{\^o}me and Jansen, Malte and Wilson, Elizabeth and Kitzing, Lena},
  journal={Nature Energy},
  volume={9},
  number={1},
  pages={20--26},
  year={2024},
  publisher={Nature Publishing Group UK London}
}

@book{benth2008stochastic,
  title={Stochastic modelling of electricity and related markets},
  author={Benth, Fred Espen and Benth, Jurate Saltyte and Koekebakker, Steen},
  volume={11},
  year={2008},
  publisher={World Scientific}
}

@article{bouchard2004discrete,
  title={Discrete-time approximation and Monte-Carlo simulation of backward stochastic differential equations},
  author={Bouchard, Bruno and Touzi, Nizar},
  journal={Stochastic Processes and their applications},
  volume={111},
  number={2},
  pages={175--206},
  year={2004},
  publisher={Elsevier}
}

@article{burdzy2009skorokhod,
  title={The Skorokhod problem in a time-dependent interval},
  author={Burdzy, Krzysztof and Kang, Weining and Ramanan, Kavita},
  journal={Stochastic processes and their applications},
  volume={119},
  number={2},
  pages={428--452},
  year={2009},
  publisher={Elsevier}
}

@article{chassagneux2009discrete,
  title={A discrete-time approximation for doubly reflected BSDEs},
  author={Chassagneux, Jean-Fran{\c{c}}ois},
  journal={Advances in Applied Probability},
  volume={41},
  number={1},
  pages={101--130},
  year={2009},
  publisher={Cambridge University Press}
}

@article{cvitanic_karatzas_1996,
  title={Backward stochastic differential equations with reflection and Dynkin games},
  author={Cvitani{\'c}, Jak{\v{s}}a and Karatzas, Ioannis},
  journal={The Annals of Probability},
  pages={2024--2056},
  year={1996},
  publisher={JSTOR}
}

@article{pham,
  title={Deep backward schemes for high-dimensional nonlinear PDEs},
  author={Hur{\'e}, C{\^o}me and Pham, Huy{\^e}n and Warin, Xavier},
  journal={Mathematics of Computation},
  volume={89},
  number={324},
  pages={1547--1579},
  year={2020}
}

@article{hamadene,
  title={Reflected BSDEs and mixed game problem},
  author={Hamad{\`e}ne, Said and Lepeltier, J-P},
  journal={Stochastic processes and their applications},
  volume={85},
  number={2},
  pages={177--188},
  year={2000},
  publisher={Elsevier}
}

@article{hamadene_hassani,
  author = {Hamad\'ene, Said and Hassani, Mohammed},
  title = {BSDEs with two reflecting barriers driven by a Brownian motion and Poisson noise and related Dynkin game},
  journal = {Electronic Journal of Probability},
  volume = {11},
  pages = {121-145},
  year = {2006}
}

@article{hamadene2005,
  title={BSDEs with two reflecting barriers: the general result},
  author={Hamad{\`e}ne, Sa{\"\i}d and Hassani, Mohammed},
  journal={Probability theory and related fields},
  volume={132},
  number={2},
  pages={237--264},
  year={2005},
  publisher={Springer}
}

@article{Ouknine2024,
  title={Reflected and doubly reflected backward stochastic differential equations with irregular obstacles and a large set of stopping strategies},
  author={Arharas, Ihsan and Ouknine, Youssef},
  journal={Journal of Theoretical Probability},
  volume={37},
  number={2},
  pages={1001--1038},
  year={2024},
  publisher={Springer}
}

@article{hamadene_2006,
  title={Mixed zero-sum stochastic differential game and American game options},
  author={Hamad{\`e}ne, Said},
  journal={SIAM Journal on Control and Optimization},
  volume={45},
  number={2},
  pages={496--518},
  year={2006},
  publisher={SIAM}
}

@book{karatzas,
  title={Brownian motion and stochastic calculus},
  author={Karatzas, Ioannis and Shreve, Steven},
  volume={113},
  year={2012},
  publisher={Springer Science \& Business Media}
}

@article{Peskir,
  title={Optimal stopping games for Markov processes},
  author={Ekstr{\"o}m, Erik and Peskir, Goran},
  journal={SIAM Journal on Control and Optimization},
  volume={47},
  number={2},
  pages={684--702},
  year={2008},
  publisher={SIAM}
}

@misc{eu2023,
  author       = {European Commission},
  title        = {Regulation of the European Parliament and of the Council amending Regulation (EU) 2019/943 and (EU) 2019/942 as well as Directives (EU) 2018/2001 and (EU) 2019/944 to improve the Union's electricity market design},
  year         = {2023},
  howpublished = {COM(2023) 148 Final, 0077},
  url          = {https://eur-lex.europa.eu/legal-content/EN/TXT/PDF/?uri=CELEX:52023PC0148}
}

@article{slaby2010explicit,
  title={An Explicit Representation of the Extended Skorokhod Map with Two Time-Dependent Boundaries},
  author={Slaby, Marek},
  journal={Journal of Probability and Statistics},
  volume={2010},
  number={1},
  pages={846320},
  year={2010},
  publisher={Wiley Online Library}
}

@article{em-orig,
  title={Continuous Markov processes and stochastic equations},
  author={Maruyama, Gisiro},
  journal={Rendiconti del Circolo Matematico di Palermo},
  volume={4},
  number={1},
  pages={48--90},
  year={1955},
  publisher={Springer}
}

@article{dumitrescu2016reflected,
  author       = {Roxana Dumitrescu and C{\'e}line Labart},
  title        = {Reflected scheme for doubly reflected {BSDE}s with jumps and RCLL obstacles},
  journal      = {Journal of Computational and Applied Mathematics},
  volume       = {296},
  pages        = {827--839},
  year         = {2016},
  publisher    = {Elsevier}
}

@article{lucia2002electricity,
  title={Electricity prices and power derivatives: Evidence from the Nordic power exchange},
  author={Lucia, Julio J and Schwartz, Eduardo S},
  journal={Review of Derivatives Research},
  volume={5},
  number={1},
  pages={5--50},
  year={2002},
  publisher={Springer}
}

@article{dumitrescu2017,
  title={Mixed generalized Dynkin game and stochastic control in a Markovian framework},
  author={Dumitrescu, Roxana and Quenez, Marie-Claire and Sulem, Aline},
  journal={Stochastics},
  volume={89},
  number={1},
  pages={400--429},
  year={2017}
}

@article{grigorova2018doubly,
  title={Doubly Reflected BSDEs and $\mathcal{E}^f$-Dynkin games: beyond the right-continuous case},
  author={Grigorova, Mireille and Imkeller, Peter and Ouknine, Youssef and Quenez, Marie-Claire},
  journal={Electronic Journal of Probability},
  volume={23},
  pages={1--35},
  year={2018}
}

@article{li2025doubly,
  title={Doubly reflected backward SDEs driven by G-Brownian motions and fully nonlinear PDEs with double obstacles},
  author={Li, Hanwu and Ning, Ning},
  journal={Stochastics and Partial Differential Equations: Analysis and Computations},
  pages={1--40},
  year={2025},
  publisher={Springer}
}

@article{lin2013nash,
  title={Nash equilibrium payoffs for stochastic differential games withreflection},
  author={Lin, Qian},
  journal={ESAIM: Control, Optimisation and Calculus of Variations},
  volume={19},
  number={4},
  pages={1189--1208},
  year={2013},
  publisher={EDP Sciences}
}

@article{chessari2023survey,
  author = {Chessari, Jack and Kawai, Ryo and Shinozaki, Yusuke and Yamada, Takashi},
  title = {Numerical Methods for Backward Stochastic Differential Equations: A Survey},
  journal = {Probability Surveys},
  volume = {20},
  pages = {486--567},
  year = {2023}
}

\end{document}